\documentclass[a4paper,10pt]{article}

\usepackage[utf8]{inputenc}
\usepackage[english]{babel}
\usepackage{amsthm,amssymb,amsfonts,amsmath}
\usepackage[authoryear]{natbib}
\usepackage{dsfont}
\usepackage{color,fancybox,graphicx}
\usepackage{url}
\usepackage{psfrag}
\usepackage{color}
\usepackage{pifont}
\usepackage{latexsym,pstricks} 

\usepackage{hyperref}
\usepackage{mathrsfs }
\setlength{\parindent}{0cm}

\usepackage{pst-tree}
\usepackage{fancyhdr}
\usepackage{etoolbox}
\usepackage{setspace}
\usepackage{enumitem}
\usepackage[ ruled, linesnumbered]{algorithm2e}

\newcommand{\T}{\Theta}

\newcommand{\E}{\mathds{E}}

\renewcommand{\P}{\mathds{P}}

\newcommand{\N}{\mathbb{N}}

\newcommand{\bX}{{\bf X}}
\newcommand{\bx}{{\bf x}}
\newcommand{\bz}{{\bf z}}

\newcommand{\mtry}{m_{\textrm{try}}}

\newtheorem{theorem}{Theorem}[section]

\newtheorem{lemme}{Lemma}

\newtheorem{corollary}{Corollary}


\newtheoremstyle{break}  
  {\topsep}   
  {\topsep}   
  {\itshape}  
  {0pt}       
  {\bfseries} 
  {}         
  {5pt plus 1pt minus 1pt}  
  {}          
\theoremstyle{break}
\newtheorem*{assumption}{}

\usepackage{natbib}
\bibliographystyle{plainnat}
\bibpunct{(}{)}{;}{a}{,}{,}

\usepackage{hyperref}
\hypersetup{
  colorlinks = true,
  urlcolor = blue, 
  linkcolor = blue,
  citecolor = blue,
}

\setlength{\parskip}{1.4ex plus 0.35ex minus 0.3ex}

\begin{document}

\begin{center}
{\Large 
\textbf{\textsf{Impact of subsampling and pruning on random forests.}}}
\medskip
\medskip
\end{center}

\noindent{\bf Roxane Duroux}\\
{\it Sorbonne Universit\'es, UPMC Univ Paris 06, F-75005, Paris, France}\\
\href{mailto:roxane.duroux@upmc.fr}{roxane.duroux@upmc.fr}\

\noindent{\bf Erwan Scornet }\\
{\it Sorbonne Universit\'es, UPMC Univ Paris 06, F-75005, Paris, France}\\
\href{mailto:erwan.scornet@upmc.fr}{erwan.scornet@upmc.fr}\

\medskip
\begin{abstract}
\noindent {\rm Random forests are ensemble learning methods introduced by \citet{Br01} that operate by averaging several decision trees built on a randomly selected subspace of the data set. Despite their widespread use in practice, the respective roles of the different mechanisms at work in Breiman's forests are not yet fully understood, neither is the tuning of the corresponding parameters. In this paper, we study the influence of two parameters, namely the subsampling rate and the tree depth, on Breiman's forests performance. More precisely, we show that fully developed subsampled forests and pruned (without subsampling) forests  have similar performances, as long as respective parameters are well chosen. Moreover, experiments show that a proper tuning of subsampling or pruning lead in most cases to an improvement of Breiman's original forests errors.

\medskip

\noindent \emph{Index Terms} --- Random forests, randomization, parameter tuning, subsampling, tree depth.

%
}
\end{abstract}

\section{Introduction}

Random forests are a class of learning algorithms used to solve pattern recognition problems. As ensemble methods, they grow many base learners (\textit{i.e.}, decision trees) and aggregate them to predict. Building several different trees from a single data set requires to randomize the tree building process by, for example, sampling the data set. Thus, there exists a large variety of random forests, depending on how trees are designed and how the randomization is introduced in the whole procedure. 

One of the most popular random forests is that of \citet{Br01} which grows trees based on CART procedure \citep[Classification And Regression Trees,][]{BrFrOlSt84} and randomizes both the training set and the splitting directions. Breiman's \citeyearpar{Br01} random forests have been under active investigation during the last decade mainly because of their good practical performance and their ability to handle high-dimensional data sets. They are acknowledged to be state-of-the-art methods in fields such as genomics \citep[][]{Qi12} and pattern recognition \citep[][]{RoRiRaOrTo08}, just to name a few.

The ease of the implementation of random forests algorithms is one of their key strengths and has greatly contributed to their widespread use.
A proper tuning of the different parameters of the algorithm is not mandatory to obtain a plausible prediction, making random forests a turn-key solution to deal with large, heterogeneous data sets.

Several authors studied the influence of the parameters on random forests accuracy. For example, the number $M$ of trees in the forests has been  thoroughly investigated by \citet{DiAl06} and \citet{genuer}. It is easy to see that the computational cost for inducing a forest increases linearly with $M$, so  a good choice for $M$ results from a trade-off between computational complexity and  accuracy ($M$ must be large enough for  predictions to be stable). \citet{DiAl06} argued that in micro-array classification problems, the particular value of $M$ is irrelevant, assuming that $M$ is large enough (typically over $500$). Several recent studies provided theoretical guarantees for choosing $M$. \citet{Sc15a} proposed a way to tune $M$ so that the error of the forest is minimal. \citet{MeHo14a} and \citet{Wa14} gave a more in-depth analysis by establishing a central limit theorem for random forests prediction and providing a method to estimate their variance. All in all, the role of $M$ on the forest prediction is broadly understood.

Besides the number of trees, forests depend on three parameters: the number $a_n$ of data points selected to build each tree, the number $\mtry$ of preselected variables along which the best split is chosen, and the minimum number \texttt{nodesize} of data points in each cell of each tree. 
The effect of $\mtry$ was thoroughly investigated in \citet{DiAl06} and  \citet{genuer} who claimed that the default value is either optimal or too small, therefore leading to no global understanding of this parameter. The story is roughly the same regarding the parameter \texttt{nodesize} for which the default value has been reported as a good choice by \citet{DiAl06}.
Furthermore, there is no theoretical guarantee to support the default values of parameters or any of the data-driven tuning proposed in the literature. 

Our objective in this paper is two-folds: $(i)$ to provide a theoretical  framework to analyse the influence of the number of points $a_n$ used to build each tree, and the tree depth (corresponding to parameters \texttt{nodesize} or \texttt{maxnode}) on the random forest performances; $(ii)$ to implement several experiments to test our theoretical findings. The paper is organized as follows. Section \ref{S-notations} is devoted to notations and presents Breiman's random forests algorithm. To carry out a theoretical analysis of the subsample size and the tree depth, we study in Section \ref{S-theorie} a particular random forest called median forest. We establish an upper bound for the risk of median forests and by doing so, we highlight the fact that subsampling and pruning have similar influence on median forest predictions. The numerous experiments on Breiman forests are presented in Section \ref{S-simu}. Proofs are postponed to Section \ref{S-preuves}.

\section{First definitions}\label{S-notations}

\subsection{General framework}

In this paper, we consider a training sample $\mathcal D_n=\{(\bX_1,Y_1),$ $ \hdots, (\bX_n,Y_n)\}$ of $[0,1]^d\times$ $  \mathbb R$-valued independent and identically distributed observations of a random pair $(\bX,$ $ Y)$, where $\mathds{E}[Y^2]<\infty$. The variable $\bX$ denotes the predictor variable and $Y$ the response variable. We wish to estimate the  regression function $m(\bx) = \E \left[ Y | \bX = \bx\right]$. In this context, we use random forests to build an estimate $m_n: [0,1]^d \to \mathds{R}$ of $m$, based on the data set $\mathcal{D}_n$.

Random forests are classification and regression methods based on a  collection of $M$ randomized trees. We denote by $m_n(\bx, \Theta_j,\mathcal D_n)$ the predicted value at point $\bx$ given by the $j$-th tree, where $\Theta_1, \hdots,\Theta_M$ are independent random variables, distributed as a generic random variable $\Theta$, independent of the sample $\mathcal D_n$. In practice, the variable $\Theta$ can be used to sample the data set or to select the candidate directions or positions for splitting. The predictions of the $M$ randomized trees are then averaged to obtain the random forest prediction
\begin{align}
m_{M,n}({\bf x}, \Theta_1, \hdots, \Theta_M, \mathcal{D}_n) = \frac{1}{M} \sum_{m=1}^M m_n(\bx, \Theta_m, \mathcal{D}_n). \label{finite_forest}
\end{align}
By the law of large numbers, for any fixed $\bx$, conditionally on $\mathcal{D}_n$, the finite forest estimate tends to the infinite forest estimate 
\begin{align*}
m_{\infty,n}(\bx, \mathcal{D}_n) = \E_{\Theta} \left[m_n(\bx, \Theta)\right].
\end{align*}
For the sake of simplicity, we denote $m_{\infty,n}(\bx, \mathcal{D}_n)$ by $m_{\infty,n}(\bx)$. Since we carry out our analysis within the $\mathbb{L}^2$ regression estimation framework, we say that $m_{\infty,n}$ is $\mathds{L}^2$ consistent if its risk, $ \mathds E [m_{\infty,n}(\bX)-m(\bX)]^2$, tends to zero, as $n$ goes to infinity.

\subsection{Breiman's forests}

Breiman's \citeyearpar{Br01} forest is one of the most used random forest algorithms. 
In Breiman's forests, each node of a single tree is associated with a hyper-rectangular cell included in $[0,1]^d$. The root of the tree is $[0,1]^d$ itself and, at each step of the  tree construction, a node (or equivalently its corresponding cell) is split in two parts. The terminal nodes (or leaves), taken together, form a partition of $[0,1]^d$. In details, the algorithm works as follows: 
\begin{enumerate}
\item Grow $M$ trees as follows:
\begin{enumerate}
\item Prior to the $j$-th tree construction, select uniformly with replacement, $a_n$ data points among $\mathcal{D}_n$. Only these $a_n$ observations are used in the tree construction. 
\item Consider the cell $[0,1]^d$. 
\item Select uniformly without replacement $\mtry$ coordinates among $\{1,$ $\hdots, d\}$. 
\item Select the split minimizing the CART-split criterion \citep[see][for details]{BrFrOlSt84} along the pre-selected $\mtry$ directions. 
\item Cut the cell at the selected split. 
\item Repeat $(c)-(e)$ for the two resulting cells until each cell of the tree contains less than \texttt{nodesize} observations. 
\item For a query point $\bx$, the $j$-th tree outputs the average of the $Y_i$ falling into the same cell as $\bx$.
\end{enumerate}
\item For a query point $\bx$, Breiman's forest outputs the average of the predictions given by the $M$ trees. 
\end{enumerate}

The whole procedure depends on four parameters: the number $M$ of trees,  the number $a_n$ of sampled data points in each tree, the number $\mtry$ of pre-selected directions for splitting, and the maximum number \texttt{nodesize} of observations in each leaf. 
By default in the \texttt{R} package \texttt{randomForest}, $M$ is set to $500$, $a_n=n$ (bootstrap samples are used to build each tree), $\mtry = d/3$ and \texttt{nodesize}$= 5$.

Note that selecting the split that minimizes the CART-split criterion is equivalent to select the split such that the two resulting cells have a minimal (empirical) variance (regarding the $Y_i$ falling into each of the two cells).

%
%

%
%

\section{Theoretical results}\label{S-theorie}

The numerous mechanisms at work in Breiman's forests, such as the subsampling step, the CART-criterion and the trees aggregation, make the whole procedure difficult to theoretically analyse. Most attempts to understand the random forest algorithms \citep[see e.g.,][]{BiDeLu08,IsKo10, DeMaFr13} have focused on simplified procedures, ignoring the subsampling step and/or replacing the CART-split  criterion by a data independent procedure more amenable to analysis. On the other hand, recent studies try to dissect the original Breiman's algorithm in order to prove its asymptotic normality \citep[][]{MeHo14a,Wa14}  or its consistency \citep[][]{ScBiVe15}. When studying the original algorithm, one faces the complexity of the algorithm thus requiring high-level mathematics to prove insightful---but rough--- results.

In order to provide theoretical guarantees on the parameters default values in random forests, we focus in this section on a simplified random forest called median forest \citep[see, for example,][for details on median tree]{BiDe14}.


\subsection{Median Forests}

%

To take one step further into the understanding of Breiman's \citeyearpar{Br01} forest behavior, we study the median random forest, which satisfies the $X$-property \citep{DeGyLu96}. Indeed, its construction depends only on the $X_i$'s which is a good trade off between the complexity of Breiman's \citeyearpar{Br01} forests and the simplicity of totally non adaptive forests, whose construction is independent of the data set. 
Besides, median forests can be tuned such that each leaf of each tree contains exactly one point. In this way, they are closer to Breiman's forests than totally non adaptive forests (whose cells cannot contain a pre-specified number of points) and thus provide a good understanding on  Breiman's forests performance even when there is exactly one data point in each leaf.

We now describe the construction of median forest. In the spirit of Breiman's \citeyearpar{Br01} algorithm, before growing each tree, data are subsampled, that is $a_n$ points ($a_n<n$) are selected, without replacement. Then, each split is performed on an empirical median along a coordinate, chosen uniformly at random among the $d$ coordinates. Recall that the median of $n$ real valued random variables $X_1, \hdots, X_n$ is defined as the only $X_{(\ell)}$ satisfying $F_n(X_{(\ell - 1)}) \leq 1/2 < F_n(X_{(\ell)})$, where the $X_{(i)}$'s are ordered increasingly and $F_n$ is the empirical distribution function of $X$. 
Note that data points on which splits are performed are not sent down to the resulting cells. This is done to ensure that data points are uniformly distributed on the resulting cells (otherwise, there would be at least one data point on the edge of a resulting cell, and thus the data points distribution would not be uniform on this cell). Finally, the algorithm stops when each cell has been cut exactly $k_n$ times, \textit{i.e.}, \texttt{nodesize}$= a_n 2^{-k_n}$. The parameter $k_n$, also known as the level of the tree, is assumed to verify $a_n 2^{-k_n} \geq 4$. The overall construction process is recalled below. 

\begin{enumerate}
\item Grow $M$ trees as follows:
\begin{enumerate}
\item Prior to the $j$-th tree construction, select uniformly without replacement, $a_n$ data points among $\mathcal{D}_n$. Only these $a_n$ observations are used in the tree construction. 
\item Consider the cell $[0,1]^d$. 
\item Select uniformly one coordinate among $\{1,$ $\hdots, d\}$ without replacement.  
\item Cut the cell at the empirical median of the $X_i$ falling into the cell along the preselected direction.  
\item Repeat $(c)-(d)$ for the two resulting cells until each cell has been cut exactly $k_n$ times. 
\item For a query point $\bx$, the $j$-th tree outputs the average of the $Y_i$ falling into the same cell as $\bx$.
\end{enumerate}
\item For a query point $\bx$, median forest outputs the average of the predictions given by the $M$ trees. 
\end{enumerate}

\subsection{Main theorem}

Let us first make some regularity assumptions on the regression model. 

\begin{assumption}\label{assumption_last_th} {\bf (H)}
One has 
\begin{align*}
Y = m(\bX) + \varepsilon,
\end{align*}
where $\varepsilon$ is a centred noise such that $\mathds{V}[\varepsilon|\bX = \bx]\leq \sigma^2$, where $\sigma^2<\infty$ is a constant. Moreover, $\bX$ is uniformly distributed on  $[0,1]^d$ and $m$ is $L$-Lipschitz continuous.
\end{assumption}

Theorem \ref{th_general_bound} presents an upper bound for the $\mathbb{L}^2$ risk of $m_{\infty,n}$.

\begin{theorem}
\label{th_general_bound}
Assume that {\bf (H)} is satisfied. Then, for all $n$, for all $\bx \in [0,1]^d$, 
\begin{align}
\E \big[ m_{\infty,n}(\bx) - m(\bx) \big]^2  \leq  2\sigma^2 \frac{2^k}{n} + dL^2C_1 \bigg(1- \frac{3}{4d} \bigg)^k. \label{general_formula_error}
\end{align}
In addition, let $\beta = 1 - 3/4d$. The right-hand side is minimal for 
\begin{align}
k_n = \frac{1}{\ln 2 - \ln \beta} \bigg[ \ln (n) +  C_3 \bigg], \label{kn_inequality}
\end{align}
under the condition that $a_n \geq C_4 n^{\frac{\ln 2}{\ln 2 - \ln \beta}}$. For these choices of $k_n$ and $a_n$, we have
\begin{align}
\E \big[ m_{\infty,n}(\bx) - m(\bx) \big]^2  \leq C n^{\frac{\ln \beta}{\ln 2 - \ln \beta}}. \label{general_bound_median_forest}
\end{align}
\end{theorem}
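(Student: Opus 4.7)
The plan is to bound the error via a bias--variance decomposition based on writing the infinite forest as a weighted average of the $Y_i$. Because every leaf of a median tree contains exactly $a_n/2^{k_n}$ sample points, one has
\begin{align*}
m_{\infty,n}(\bx) = \sum_{i=1}^n \bar{W}_{ni}(\bx) Y_i, \qquad \bar{W}_{ni}(\bx) = \Et\!\left[\frac{2^{k_n}}{a_n} \mathbf{1}_{\bX_i \in A_n(\bx, \Theta),\, i \in S_{\Theta}}\right],
\end{align*}
where $A_n(\bx, \Theta)$ denotes the leaf of $\bx$ and $S_\Theta$ the subsample; the weights satisfy $\sum_i \bar{W}_{ni}(\bx) = 1$. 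Writing $Y_i = m(\bX_i) + \e_i$ and applying $(a+b)^2 \leq 2a^2 + 2b^2$ splits the squared error into an approximation term in the $m(\bX_i) - m(\bx)$ and a noise term in the $\e_i$.

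For the noise term, conditional on the $\bX_i$'s the $\e_i$ are centred and independent with conditional variance at most $\sigma^2$, so the expected squared noise contribution equals $\sigma^2 \,\E\bigl[\sum_i \bar{W}_{ni}(\bx)^2\bigr]$. The key step, which is precisely where the $1/n$ rate (rather than $1/a_n$) emerges, is to use that the marginal probability of drawing $i$ without replacement into the subsample equals $a_n/n$, so
\begin{align*}
\bar{W}_{ni}(\bx) = \frac{2^{k_n}}{a_n} \cdot \frac{a_n}{n} \cdot \Pt\!\left(\bX_i \in A_n(\bx, \Theta) \,\bigm|\, i \in S_\Theta\right) \leq \frac{2^{k_n}}{n}.
\end{align*}
Combined with $\sum_i \bar{W}_{ni}(\bx) = 1$, this yields $\sum_i \bar{W}_{ni}(\bx)^2 \leq 2^{k_n}/n$ and hence a noise contribution of at most $\sigma^2 \, 2^{k_n}/n$.

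For the bias term, Jensen's inequality together with the Lipschitz assumption gives
\begin{align*}
\Big(\sum_i \bar{W}_{ni}(\bx)(m(\bX_i) - m(\bx))\Big)^2 \leq L^2 \, \Et\!\left[\mathrm{diam}(A_n(\bx, \Theta))^2\right].
\end{align*}
Controlling this expected squared diameter is the technical heart of the proof. At each split one picks a coordinate $j$ uniformly among $\{1,\dots,d\}$ and cuts at the empirical median of the uniform data on $[0, L_j]$; a direct computation on order statistics shows $\E[L_{j,\mathrm{new}}^2] \leq L_j^2/4$ up to an $O(L_j^2/m)$ correction, which is harmless because the condition $a_n 2^{-k_n} \geq 4$ keeps $m \geq 4$ at every level. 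Summing over the $d$ coordinates gives $\E[D_{\mathrm{new}}^2] \leq (1 - 3/(4d))\, D^2$, and iterating $k$ times starting from $D_0^2 \leq d$ yields $\Et[\mathrm{diam}(A_n(\bx, \Theta))^2] \leq d\, C_1 \beta^k$ with $\beta = 1 - 3/(4d)$ and some constant $C_1$.

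Combining the two bounds gives \eqref{general_formula_error}. The optimal $k_n$ is then obtained by balancing the two terms: differentiating $2\sigma^2 \, 2^k/n + dL^2 C_1 \beta^k$ with respect to $k$ and setting the derivative to zero produces \eqref{kn_inequality}, with $C_3$ absorbing $\sigma^2$, $L$, $d$ and $\ln\beta$. The requirement $a_n 2^{-k_n} \geq 4$ translates into $a_n \geq C_4 \, n^{\ln 2/(\ln 2 - \ln \beta)}$, and plugging $k_n$ back into the bound yields the rate \eqref{general_bound_median_forest}. The main obstacle is the expected squared diameter estimate: the empirical median is not exactly the midpoint, and the resulting $O(1/m)$ fluctuations must be controlled carefully enough that the constant $C_1$ produced by iterating the one-step recursion does not blow up with $k$ or $n$. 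Everything else is routine bias--variance bookkeeping and calculus.
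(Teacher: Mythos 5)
Your proposal is correct and follows essentially the same route as the paper: the same estimation/approximation decomposition, the same use of the marginal subsampling probability $a_n/n$ together with the leaf size $a_n 2^{-k_n}$ to get the $2^{k}/n$ variance term, and the same per-coordinate squared-length recursion with factor $1-\tfrac{3}{4d}$ (your one-step recursion is exactly the paper's product-of-beta-second-moments computation in Lemma~\ref{Lemme_cell_length_beta}, where the $O(1/m)$ corrections are shown to contribute only the bounded factor $\exp(12/(4d-3))$). The optimization of $k_n$ by differentiating $C_1 2^k + C_2\beta^k$ is also how the paper proceeds, so the only thing left implicit in your sketch is the careful bookkeeping of the median-split corrections, which you correctly identify as the technical heart.
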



Equation (\ref{general_formula_error}) stems from a standard decomposition of the estimation/appro-ximation error of median forests. Indeed, the first term in equation (\ref{general_formula_error}) corresponds to the estimation error of the forest as in \citet{Bi12} or \citet{ArGe14} whereas the second term is the approximation error of the forest, which decreases exponentially in $k$. 
Note that this decomposition is consistent with the existing literature on random forests. Two common assumptions to prove consistency of simplified random forests are $n/2^k \to \infty$ and $k \to \infty$, which respectively controls the estimation and approximation of the forest. According to Theorem \ref{th_general_bound}, making these assumptions for median forests  results in proving their consistency.

Note that the estimation error of a single tree grown with $a_n$ observations is of order $2^{k}/a_n$. Thus, because of the  subsampling step (\textit{i.e.}, since $a_n <n$), the estimation error of median forests $2^{k}/n$ is smaller than that of a single tree. The variance reduction of random forests is a well-known property, already noticed by \citet{Ge12} for a totally non adaptive forest, and by \citet{Sc15a} in the case of median forests. In our case, we exhibit an explicit bound on the forest variance, which allows us to precisely compare it to the individual tree variance therefore highlighting a first benefit of forests over singular trees.

As for the first term in inequality (\ref{general_formula_error}), the second term could be expected. Indeed, in the levels close to the root, a split is very close to the center of a side of a cell (since $\bX$ is uniformly distributed over $[0,1]^d$). Thus, for all $k$ small enough, the approximation error of median forests should be close to that of centred forests studied by \citet{Bi12}. Surprisingly, the rate of consistency of median forests is faster than that of centred forest established  in \citet{Bi12}, which is equal to 
\begin{align}
\E \big[ m_{\infty,n}^{cc}(\bX) - m(\bX) \big]^2 \leq C n^{\frac{ - 3}{4 d \ln 2 +  3}}, \label{bound_centred_forest}
\end{align}
where $m_{\infty,n}^{cc}$ stands for the centred forest estimate.
A close inspection of the proof of Proposition $2.2$ in \citet{Bi12} shows that it can be easily adapted to match the (more optimal) upper bound in Theorem \ref{th_general_bound}.

Noteworthy, the fact that the upper bound (\ref{general_bound_median_forest}) is sharper than (\ref{bound_centred_forest}) appears to be important in the case where $d=1$. In that case, according to Theorem \ref{th_general_bound}, for all $n$, for all $\bx \in [0,1]^d$, 
\begin{align*}
\E \big[ m_{\infty,n}(\bx) - m(\bx) \big]^2  \leq C n^{-2/3}, 
\end{align*}
which is the minimax rate over the class of Lipschitz functions \citep[see, e.g.,][]{St80,St82}. This was to be expected since, in dimension one, median random forests are simply a median tree which is known to reach minimax rate \citep[][]{DeGyLu96}. Unfortunately, for $d=1$, the centred forest bound (\ref{bound_centred_forest}) turns out to be suboptimal since it results in 
\begin{align}
\E \big[ m_{\infty,n}^{cc}(\bX) - m(\bX) \big]^2 \leq C n^{\frac{- 3}{4 \ln 2 +  3}}.
\end{align}

Theorem \ref{th_general_bound} allows us to derive rates of consistency for two particular forests: the pruned median forest, where no subsampling is performed prior to build each tree, and the fully developed median forest, where each leaf contains a small number of points. Corollary \ref{Corollary_1} deals with the pruned forests. 

\begin{corollary}[Pruned median forests]
\label{Corollary_1}
Let  $\beta = 1 - 3/4d$.
Assume that {\bf (H)} is satisfied. Consider a median forest without subsampling (i.e., $a_n=n$) and such that the parameter $k_n$ satisfies (\ref{kn_inequality}). Then, for all $n$, for all $\bx \in [0,1]^d$, 
\begin{align*}
\E \big[ m_{\infty,n}(\bx) - m(\bx) \big]^2  \leq C n^{\frac{\ln \beta}{\ln 2 - \ln \beta}}.
\end{align*}
\end{corollary}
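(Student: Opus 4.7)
The plan is to obtain Corollary~\ref{Corollary_1} as a direct specialization of Theorem~\ref{th_general_bound} to the case $a_n = n$. Since $\beta = 1 - 3/(4d) \in (0,1)$, one has $\ln\beta < 0$, hence $\ln 2 - \ln\beta > \ln 2$, and so the exponent $\alpha := \ln 2/(\ln 2 - \ln\beta)$ appearing in the subsampling hypothesis of Theorem~\ref{th_general_bound} satisfies $\alpha \in (0,1)$. The first thing I would check is therefore that, when $a_n = n$, the condition $a_n \geq C_4\, n^{\alpha}$ holds automatically for every $n$ sufficiently large, since $n^{1-\alpha} \to \infty$; the finitely many smaller $n$ can be absorbed by enlarging the constant $C$.

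Next, I would verify that the tree-depth constraint $a_n 2^{-k_n} \geq 4$ built into the median-forest construction is compatible with the choice of $k_n$ in (\ref{kn_inequality}). Substituting gives $2^{k_n} = C'\, n^{\alpha}$ for some constant $C' > 0$, so that $a_n 2^{-k_n} = n^{1-\alpha}/C'$, which again diverges with $n$; hence the constraint is met for every $n$ large enough, and for the remaining small $n$ the bound can once more be ensured by adjusting $C$.

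Finally, I would invoke the conclusion (\ref{general_bound_median_forest}) of Theorem~\ref{th_general_bound} with $a_n = n$ and $k_n$ given by (\ref{kn_inequality}), which immediately yields
\[
\E\big[m_{\infty,n}(\bx) - m(\bx)\big]^2 \leq C\, n^{\ln\beta/(\ln 2 - \ln\beta)},
\]
as claimed. There is no genuine obstacle here: the corollary is essentially a rewriting of Theorem~\ref{th_general_bound} in a degenerate regime of the subsampling parameter. The only non-trivial observation is that $\alpha < 1$, which is precisely what allows the pruned configuration $a_n = n$ to satisfy the theorem's hypothesis on the subsample size.
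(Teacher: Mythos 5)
Your proof is correct, but it follows a genuinely different route from the paper's. You treat Corollary \ref{Corollary_1} as a pure specialization of the final conclusion (\ref{general_bound_median_forest}) of Theorem \ref{th_general_bound}: you verify that the hypothesis $a_n \geq C_4 n^{\alpha}$ with $\alpha = \ln 2/(\ln 2 - \ln\beta)$ is met by $a_n = n$ because $\alpha < 1$, check compatibility with the depth constraint $a_n 2^{-k_n}\geq 4$ (which is in fact the origin of that hypothesis, so your two verifications coincide), and then quote the theorem. The paper's proof, by contrast, never invokes (\ref{general_bound_median_forest}): it starts from the raw bound (\ref{general_formula_error}), sets $f(x) = C_1 2^{x} + C_2\beta^{x}$, locates the critical point $x^{\star}$ by elementary calculus --- which is where formula (\ref{kn_inequality}) actually comes from --- and substitutes $k_n = x^{\star}$ back into $f$ to obtain the rate $n^{\ln\beta/(\ln 2 - \ln\beta)}$ with explicit constants. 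In other words, in the paper the optimization claim of Theorem \ref{th_general_bound} is only justified \emph{inside} the proof of Corollary \ref{Corollary_1}; the proof of the theorem itself stops at (\ref{general_formula_error}). Your argument is therefore cleaner and shorter if one takes the theorem's second assertion as already established, but it would be circular under the paper's actual logical organization: to make it self-contained you would need to supply (or point to) the minimization of $C_1 2^{k} + C_2\beta^{k}$ somewhere. On the other hand, your explicit check that $n \geq C_4 n^{\alpha}$ holds for $n$ large enough (with small $n$ absorbed into $C$) is a point the paper silently skips, and your observation that $\alpha<1$ is exactly the reason the unsubsampled regime $a_n=n$ is admissible.
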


Up to an approximation, Corollary \ref{Corollary_1} is the counterpart of Theorem $2.2$ in \citet{Bi12} but tailored for median forests. Indeed, up to a small modification of the proof of Theorem $2.2$, the rate of consistency provided in Theorem $2.2$ for centred forests and that of Corollary \ref{Corollary_1} for median forests are identical. Note that, for both forests, the optimal depth $k_n$ of each tree is the same.

Corollary \ref{Corollary_2} handles the case of fully grown median forests, that is forests which contain a small number of points in each leaf. Indeed, note that since $k_n = \log_2(a_n) - 2$, the number of observations in each leaf varies between $4$ and $8$. 

\begin{corollary}[Fully grown median forest]
\label{Corollary_2}
Let  $\beta = 1 - 3/4d$.
Assume that {\bf (H)} is satisfied. Consider a fully grown median forest whose parameters $k_n$ and $a_n$ satisfy $k_n = \log_2(a_n) - 2$. The optimal choice for $a_n$ (that minimizes the $\mathbb{L}^2$ error in (\ref{general_formula_error})) is then given by (\ref{kn_inequality}), that is
\begin{align*}
a_n = C_4 n^{\frac{\ln 2}{\ln 2 - \ln \beta}}.
\end{align*}
In that case, for all $n$, for all $\bx \in [0,1]^d$, 
\begin{align*}
\E \big[ m_{\infty,n}(\bx) - m(\bx) \big]^2  \leq C n^{\frac{\ln \beta}{\ln 2 - \ln \beta}}.
\end{align*}
\end{corollary}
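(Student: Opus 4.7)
The proof is essentially a specialization of Theorem \ref{th_general_bound} to the regime where the two parameters $k_n$ and $a_n$ are tied by the fully grown constraint $k_n = \log_2(a_n) - 2$, so that only one degree of freedom remains.

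First, I would substitute this constraint into the upper bound (\ref{general_formula_error}). Using $2^{k_n} = a_n/4$ and $\beta^{k_n} = \beta^{-2} a_n^{\log_2 \beta}$, the right-hand side of (\ref{general_formula_error}) becomes a function of the single parameter $a_n$, namely
\begin{align*}
\E \big[ m_{\infty,n}(\bx) - m(\bx) \big]^2 \leq \frac{\sigma^2 a_n}{2n} + dL^2 C_1 \beta^{-2}\, a_n^{\log_2 \beta}.
\end{align*}
Since $0 < \beta < 1$, one has $\log_2 \beta < 0$, so the first term is strictly increasing in $a_n$ while the second is strictly decreasing; the right-hand side is therefore strictly convex on $(0,\infty)$ and admits a unique minimum.

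Second, I would solve the first-order condition, which gives
\begin{align*}
a_n = C_4\, n^{1/(1 - \log_2 \beta)} = C_4\, n^{\ln 2 / (\ln 2 - \ln \beta)},
\end{align*}
for an explicit constant $C_4$ depending on $\sigma$, $d$, $L$, $C_1$ and $\beta$. This is exactly the value prescribed by (\ref{kn_inequality}), as one can also verify directly: plugging the optimal depth $k_n = (\ln n + C_3)/(\ln 2 - \ln \beta)$ from Theorem \ref{th_general_bound} into $a_n = 4 \cdot 2^{k_n}$ yields the same expression, with $C_4 = 4\, e^{C_3 \ln 2 / (\ln 2 - \ln \beta)}$.

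Finally, this choice of $a_n$ saturates (with equality up to the constant) the feasibility condition $a_n \geq C_4\, n^{\ln 2/(\ln 2 - \ln \beta)}$ required by Theorem \ref{th_general_bound}, so the bound (\ref{general_bound_median_forest}) applies directly and gives the announced rate $n^{\ln \beta/(\ln 2 - \ln \beta)}$. There is no real obstacle in the proof: the corollary reduces to checking that the constrained minimization over the one remaining degree of freedom coincides with the unconstrained optimum of Theorem \ref{th_general_bound}, which is immediate because the fully grown constraint only fixes a relation between $k_n$ and $a_n$ without altering the shape of the bias/variance trade-off.
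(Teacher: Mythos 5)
Your proof is correct and follows essentially the same route as the paper: the paper simply substitutes the optimal depth $k_n$ from (\ref{kn_inequality}) into $a_n = 4\cdot 2^{k_n}$ to read off $a_n = C_4\, n^{\ln 2/(\ln 2 - \ln \beta)}$ and then invokes Theorem \ref{th_general_bound}, which is exactly what your final verification does. Your additional step of rewriting the bound as a convex function of $a_n$ alone and checking the first-order condition is a sound (if redundant) confirmation that the constrained and unconstrained optima coincide.
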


Whereas each individual tree in the fully developed median forest is inconsistent (since each leaf contains a small number of points), the whole forest is consistent and its rate of consistency is provided by Corollary \ref{Corollary_2}.
Besides, Corollary \ref{Corollary_2} provides us with the optimal subsampling size for fully developed median forests.

Provided a proper parameter tuning, pruned median forests without subsampling and fully grown median forests (with subsampling) have similar performance. A close look at Theorem \ref{th_general_bound} shows that the subsampling size has no effect on the performance, provided it is large enough. The parameter of real importance is the tree depth $k_n$. Thus, fixing $k_n$ as in equation (\ref{kn_inequality}), and by varying the subsampling rate $a_n/n$ one can obtain random forests that are more-or-less pruned, all satisfying the optimal bound in Theorem \ref{th_general_bound}. In that way, Corollary \ref{Corollary_1} and \ref{Corollary_2} are simply two particular examples of such forests. 

Although our analysis sheds some light on the role of subsampling and tree depth, the statistical performances of median forests does not allow us to choose between pruned and subsampled forests. Interestingly, note that these two types of random forests can be used in two different contexts. If one wants to obtain fast predictions, then subsampled forests, as described in Corollary \ref{Corollary_2}, are to be preferred since their computational time is lower than pruned random forests (described in Corollary \ref{Corollary_1}). However, if one wants to build more accurate predictions, pruned random forests have to be chosen since the recursive random forest procedure allows to build several forests of different tree depths in one run, therefore allowing to select the best model among these forests. 
%
%
%
%
%





\section{Experiments}\label{S-simu}

In the light of Section \ref{S-theorie}, we carry out some simulations to investigate $(i)$ how pruned and subsampled forests compare with Breiman's forests and $(ii)$ the influence of subsampling size and tree depth on Breiman's procedure. 
To do so, we start by defining various  regression models on which the several experiments are based. 
Throughout this section, we assess the forest performances by computing their empirical $\mathbb{L}^2$ error.

\begin{itemize}[label = ,leftmargin=0cm]

\item{\bf Model 1}: $n = 800, d = 50, Y = \tilde{X}_1^2 + \exp(-\tilde{X}_2^2)$

\item {\bf Model 2}: $n = 600, d = 100, Y = \tilde{X}_1 \tilde{X}_2 + \tilde{X}_3^2 - \tilde{X}_4 \tilde{X}_7 + \tilde{X}_8 \tilde{X}_{10} - \tilde{X}_6^2 + \mathcal{N}(0,0.5)$

\item {\bf Model 3}: $n = 600, d = 100, Y = -\sin(2 \tilde{X}_1) + \tilde{X}_2^2 + \tilde{X}_3 - \exp(-\tilde{X}_4) + \mathcal{N}(0,0.5)$

\item {\bf Model 4}: $n = 600, d = 100, Y = \tilde{X}_1 + (2 \tilde{X}_2-1)^2 + \sin(2 \pi \tilde{X}_3) / (2-\sin(2 \pi \tilde{X}_3)) + \sin(2 \pi \tilde{X}_4) + 2 \cos(2\pi \tilde{X}_4) + 3 \sin^2(2\pi \tilde{X}_4 )+ 4 \cos^2(2\pi \tilde{X}_4) + \mathcal{N}(0,0.5)$

\item {\bf Model 5}: $n = 700, d = 20, Y = \mathds{1}_{\tilde{X}_1 > 0 } + \tilde{X}_2^3 +  \mathds{1}_{\tilde{X}_4 + \tilde{X}_6 - \tilde{X}_8  - \tilde{X}_9 > 1 + \tilde{X}_{10} } + \exp(-\tilde{X}_2^2) + \mathcal{N}(0,0.5)$

\item {\bf Model 6}: $n = 500, d = 30, Y = \sum_{k=1}^{10} \mathds{1}_{\tilde{X}_k^3 < 0 } - \mathds{1}_{\mathcal{N}(0,1)> 1.25 }$

\item {\bf Model 7}: $n = 600, d = 300, Y = \tilde{X}_1^2 + \tilde{X}_2^2 \tilde{X}_3  \exp(-|\tilde{X}_4|) + \tilde{X}_6 - \tilde{X}_8 + \mathcal{N}(0,0.5)$


\item {\bf Model 8}: $n= 500,d= 1000,Y=\tilde{X}_1+3\tilde{X}_3^2 - 2 \exp(-\tilde{X}_5) + \tilde{X}_6$


\end{itemize}

For all regression frameworks, we consider covariates $\bX = (X_1, \hdots, X_d)$ that are uniformly distributed over $[0,1]^d$. We also let $\tilde{X}_i = 2(X_i-0.5)$ for $1 \leq i \leq d$. Some of these models are toy models ({\bf Model 1, 5-8}). {\bf Model 2} can be found in \citet{VaPoHu07} and  {\bf Models 3-4} are presented in \citet{MeVaBu09}. All numerical implementations have been performed using the free R software. For each experiment, the data set is divided into a training set ($80\%$ of the data set) and a test set (the remaining $20\%$). Then, the empirical risk ($\mathds{L}^2$ error) is evaluated on the test set.

\subsection{Pruning}

We start by studying Breiman's original forests and pruned Breiman's forests. Breiman's forests are the standard procedure implemented in the \texttt{R} package \texttt{randomForest}, with the parameters default values, as described in Section \ref{S-notations}. Pruned Breiman's forests are similar to Breiman's forests except that the tree depth is controlled via the parameter \texttt{maxnodes} (which corresponds to the number of leaves in each tree) and that the whole sample $\mathcal{D}_n$ is used to build each tree.

In Figure \ref{fig1}, we present, for the {\bf Models 1-8} introduced previously, the evolution of the empirical risk of pruned forests for different numbers of terminal nodes. We add the representation of the empirical risk of Breiman's original forest in order to compare all forests errors at a glance. Every sub-figure of Figure \ref{fig1} presents forests built with $500$ trees. The printed errors are obtained by averaging the risks of $50$ forests. Because of the estimation/approximation compromise, we expect the empirical risk of pruned forests to be decreasing and then increasing, as the number of leaves grows. In most of the models, it seems that the estimation error is too low to be detected, this is why several risks in Figure \ref{fig1} are only decreasing.

\begin{figure}[h!!]
\begin{tabular}{cc}
\includegraphics[scale=0.3]{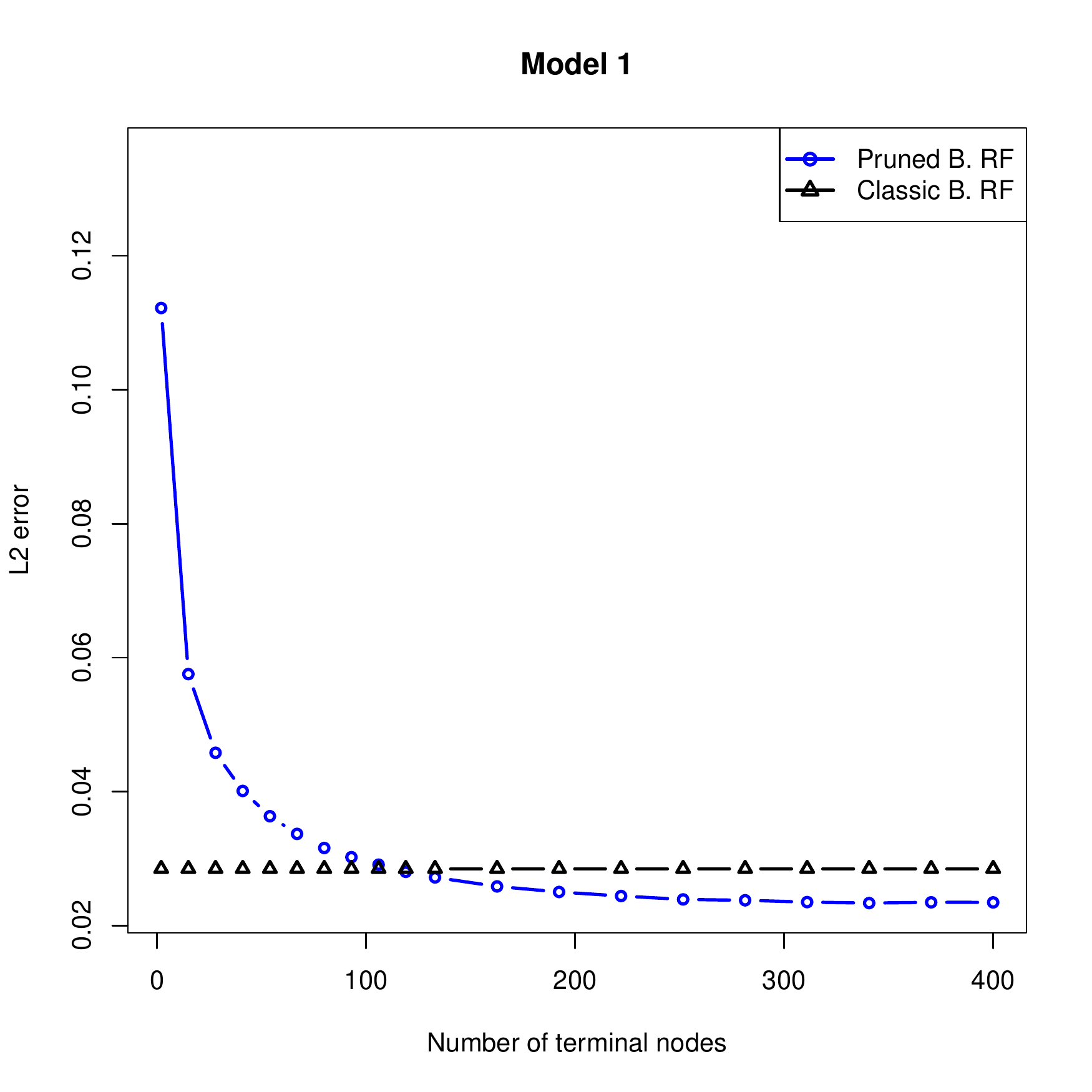}
& \includegraphics[scale=0.3]{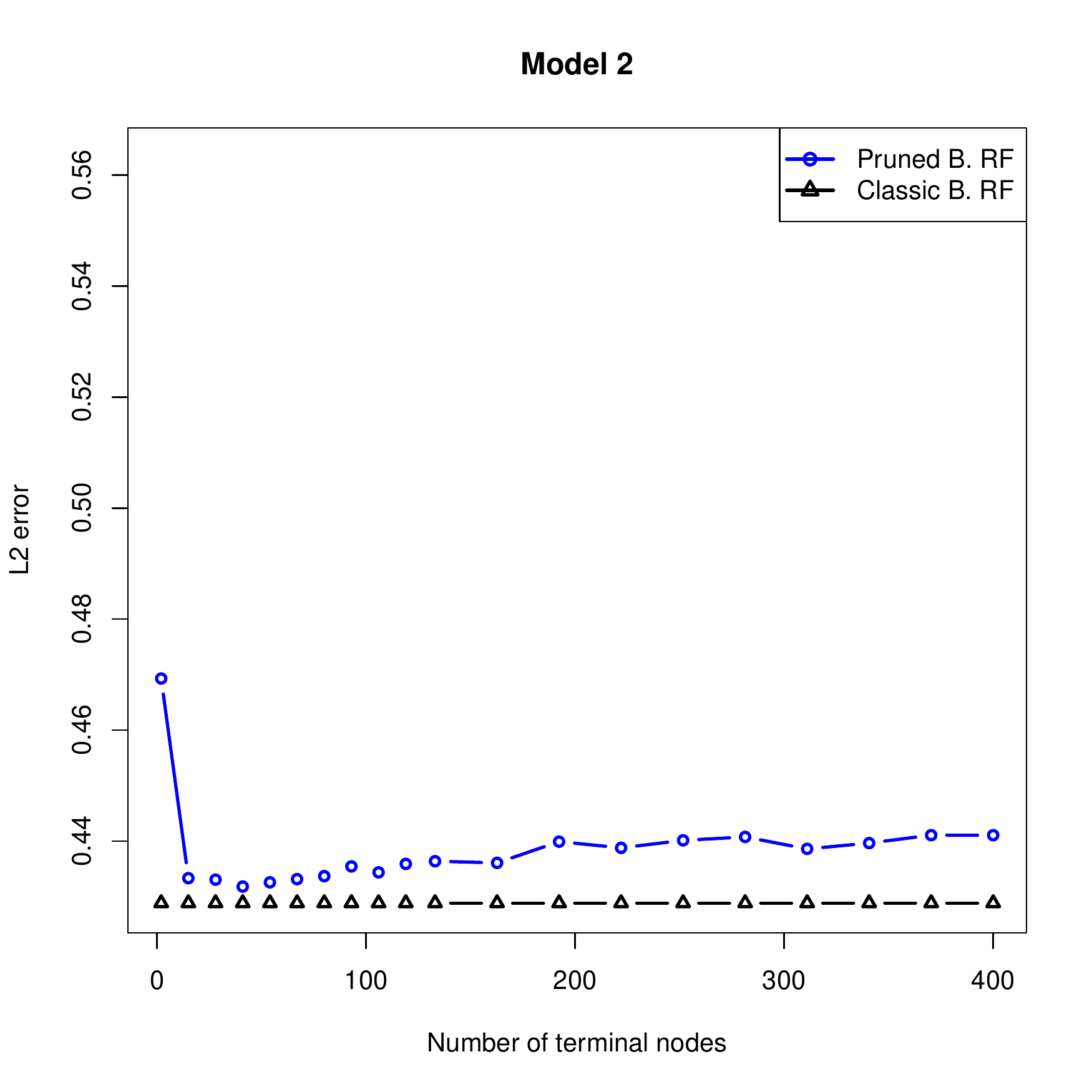}
\\
\includegraphics[scale=0.3]{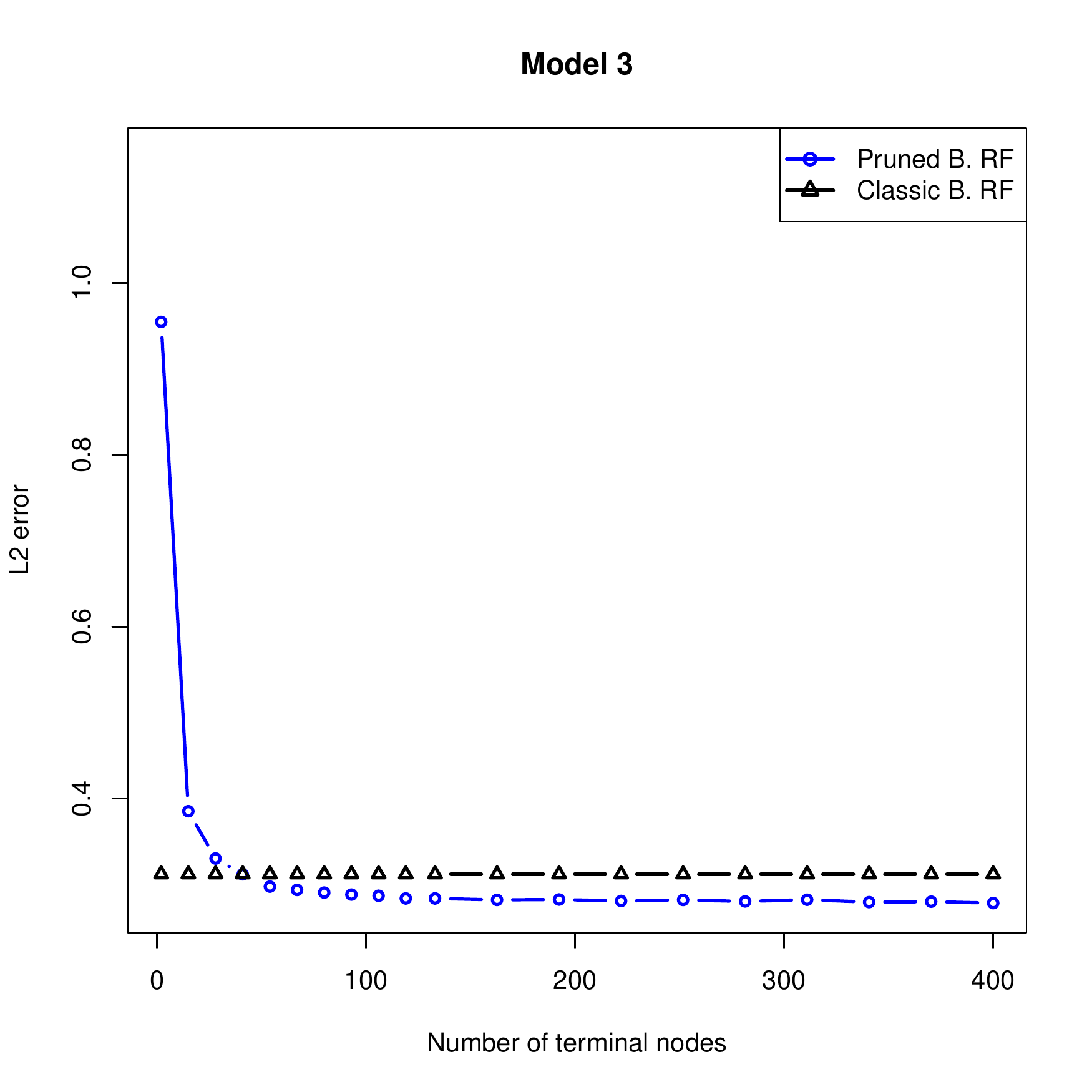}
& \includegraphics[scale=0.3]{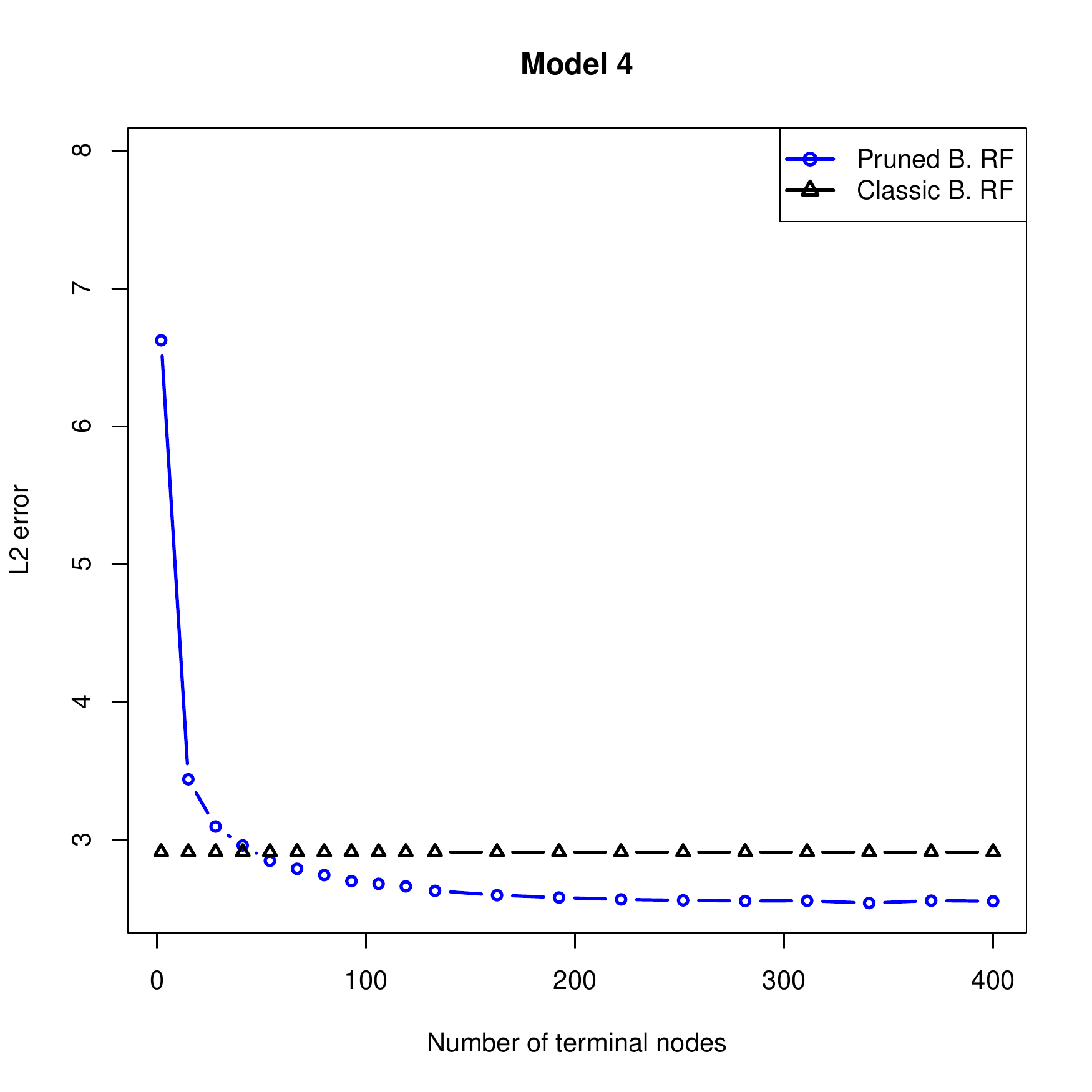}
\\
\includegraphics[scale=0.3]{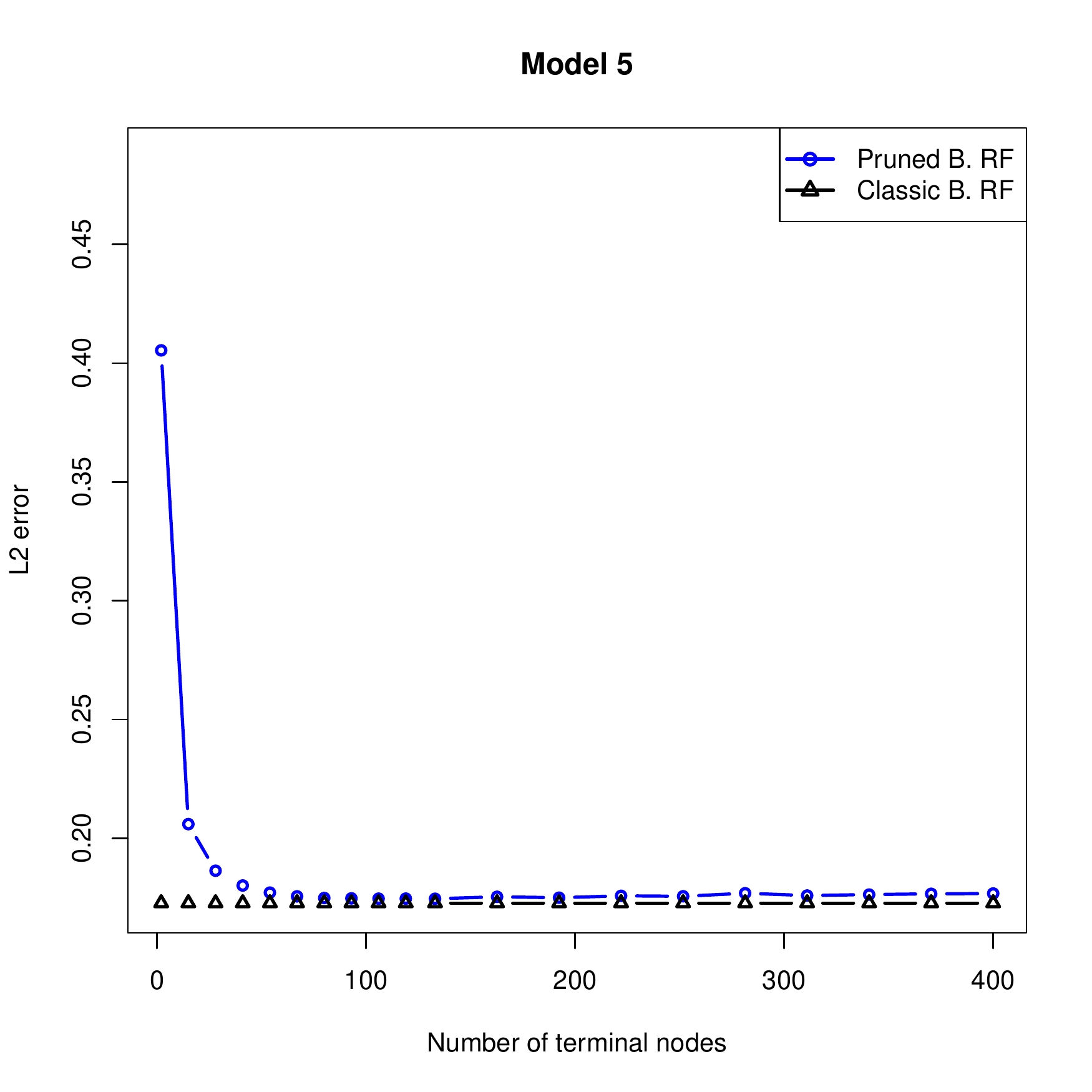}
& \includegraphics[scale=0.3]{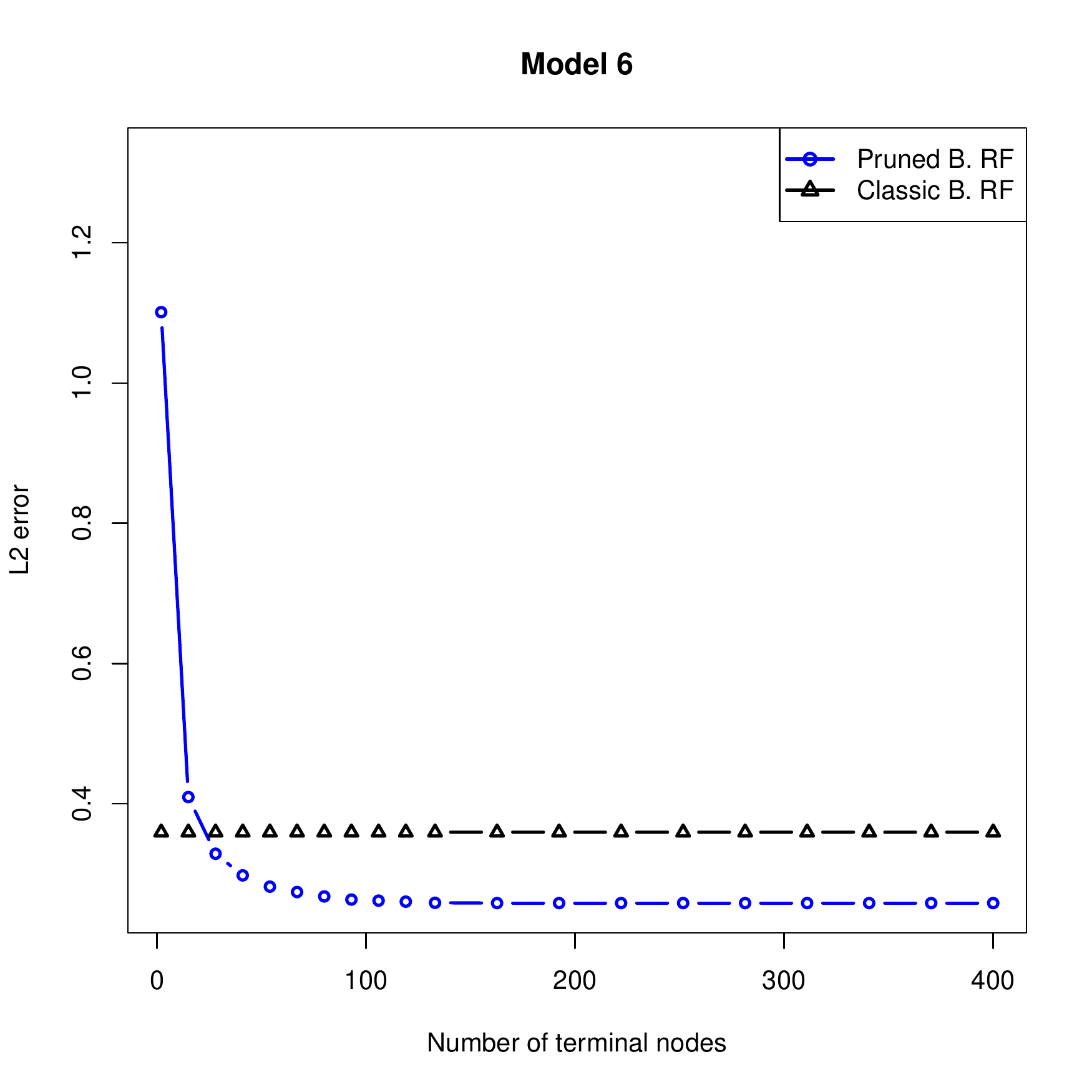}
\\
\includegraphics[scale=0.3]{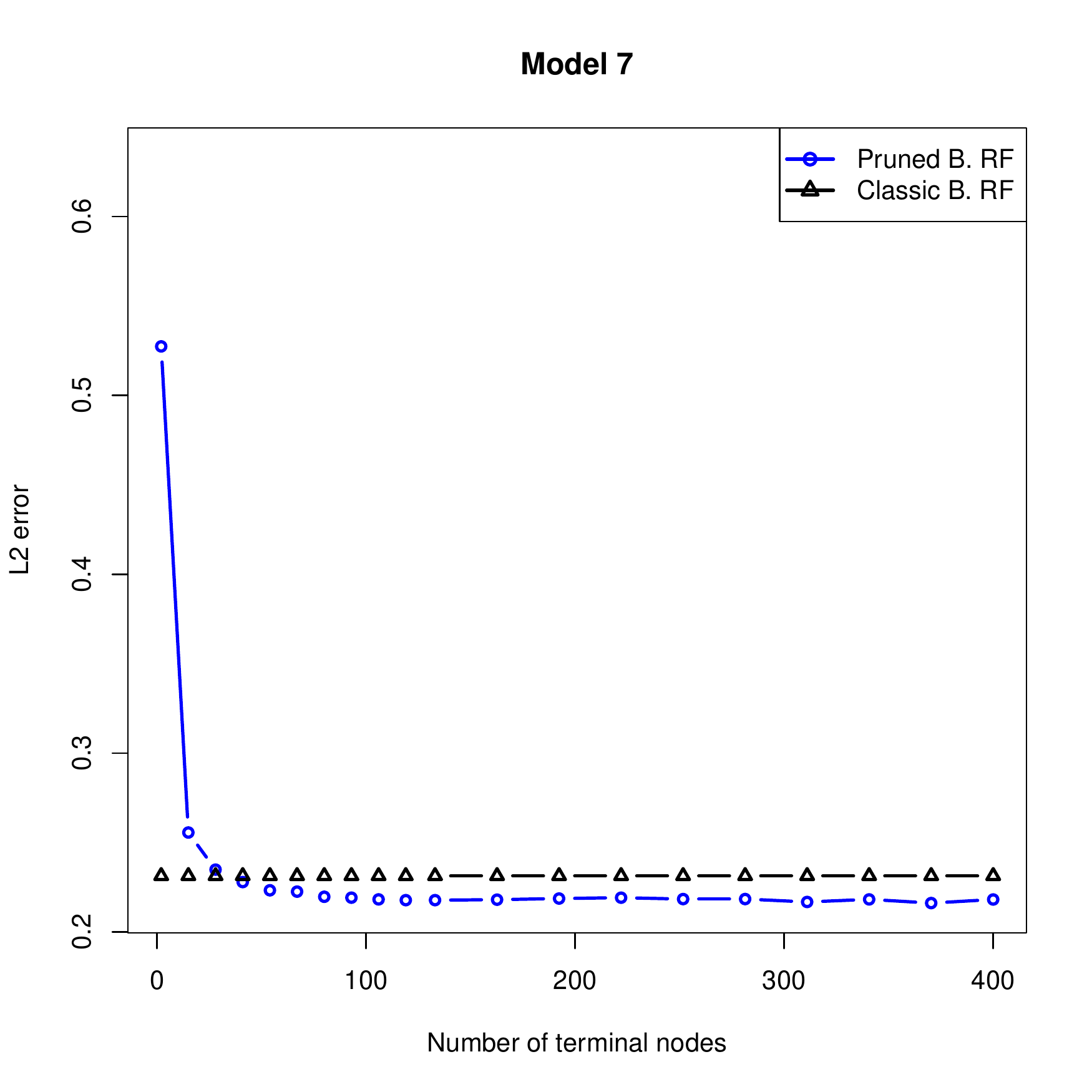}
& \includegraphics[scale=0.3]{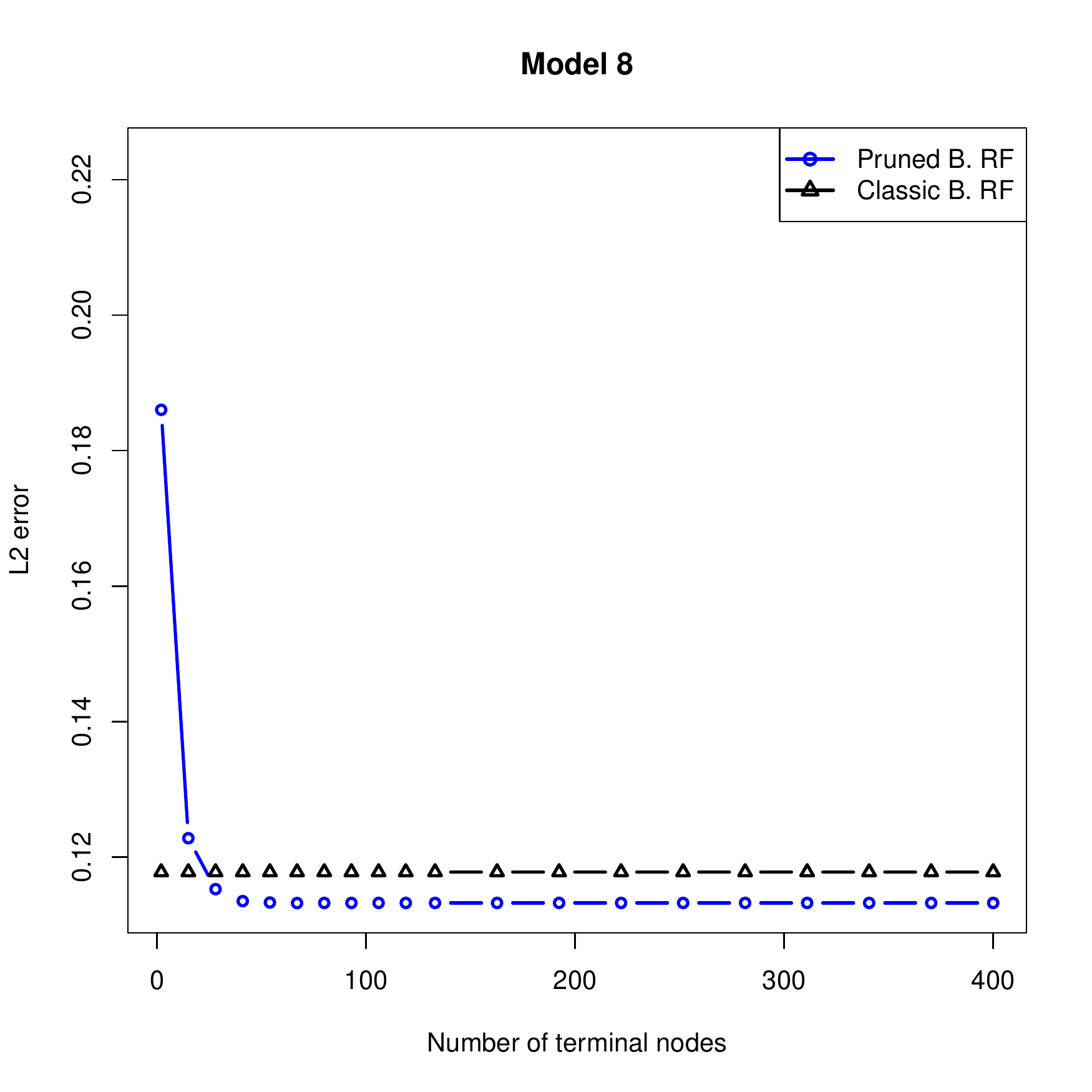}
\\
\end{tabular}
\caption{Comparison of standard Breiman's forests (B. RF) against pruned Breiman's forests in terms of $\mathbb{L}^2$ error.}
\label{fig1}
\end{figure}

For every model, we can notice that pruned forests performance is comparable with the one of standard Breiman's forest, as long as the pruning parameter (the number of leaves) is well chosen. For example, for the {\bf Model 1}, a pruned forest with approximately $110$ leaves for each tree has the same empirical risk as the standard Breiman's forest. In the original algorithm of Breiman's forest, the construction of each tree uses a bootstrap sample of the data. For the pruned forests, the whole data set is used for each tree, and then the randomness comes only from the pre-selected directions for splitting. The performances of bootstrapped and pruned forests are very alike. Thus, bootstrap seems not to be the cornerstone of the Breiman's forest practical superiority to other regression algorithms. As it is shown in Corollary \ref{Corollary_1} and the simulations, pruning and sampling of the data set (here bootstrap) are equivalent.

In order to study the optimal pruning value (\texttt{maxnodes} parameter in the R algorithm), we draw the same curves as in Figure \ref{fig1}, for different learning data set sizes ($n=100$, $200$, $300$ and $400$). We also copy in an other  graph the optimal values that we found for each size of the learning set. The optimal pruning value $m^{\star}$ is defined as 
\begin{align*}
m^{\star} = \min \{m : |\hat{L}_{m} - \min_r \hat{L}_r| < 0.05 \times (\max_r \hat{L}_r - \min_r \hat{L}_r) \}
\end{align*}
%
%
%
where $\hat{L}_r$ is the risk of the forest built with the parameter \texttt{maxnodes}$=r$. The results can be seen in Figure \ref{fig2}. According to the last sub-figure in Figure \ref{fig2}, the optimal pruning value seems to be proportional to the sample size. For {\bf Model 1}, the optimal value $m^{\star}$ seems to verify $0.25n < m^{\star} < 0.3n$. The other models show a similar behaviour, as it can be seen in Figure \ref{fig3}.

\begin{figure}[h!!]
\begin{tabular}{cc}
\includegraphics[scale=0.3]{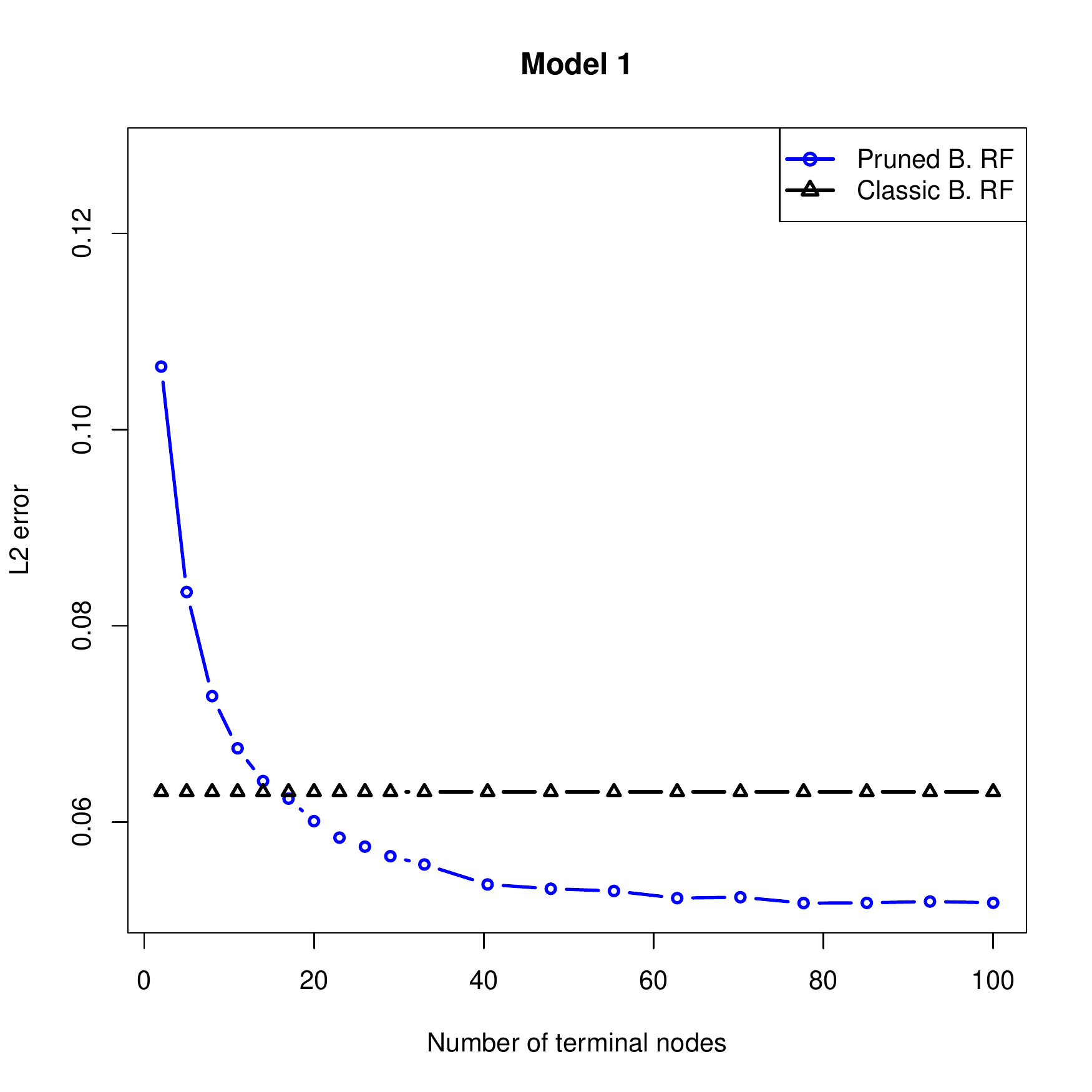}
& \includegraphics[scale=0.3]{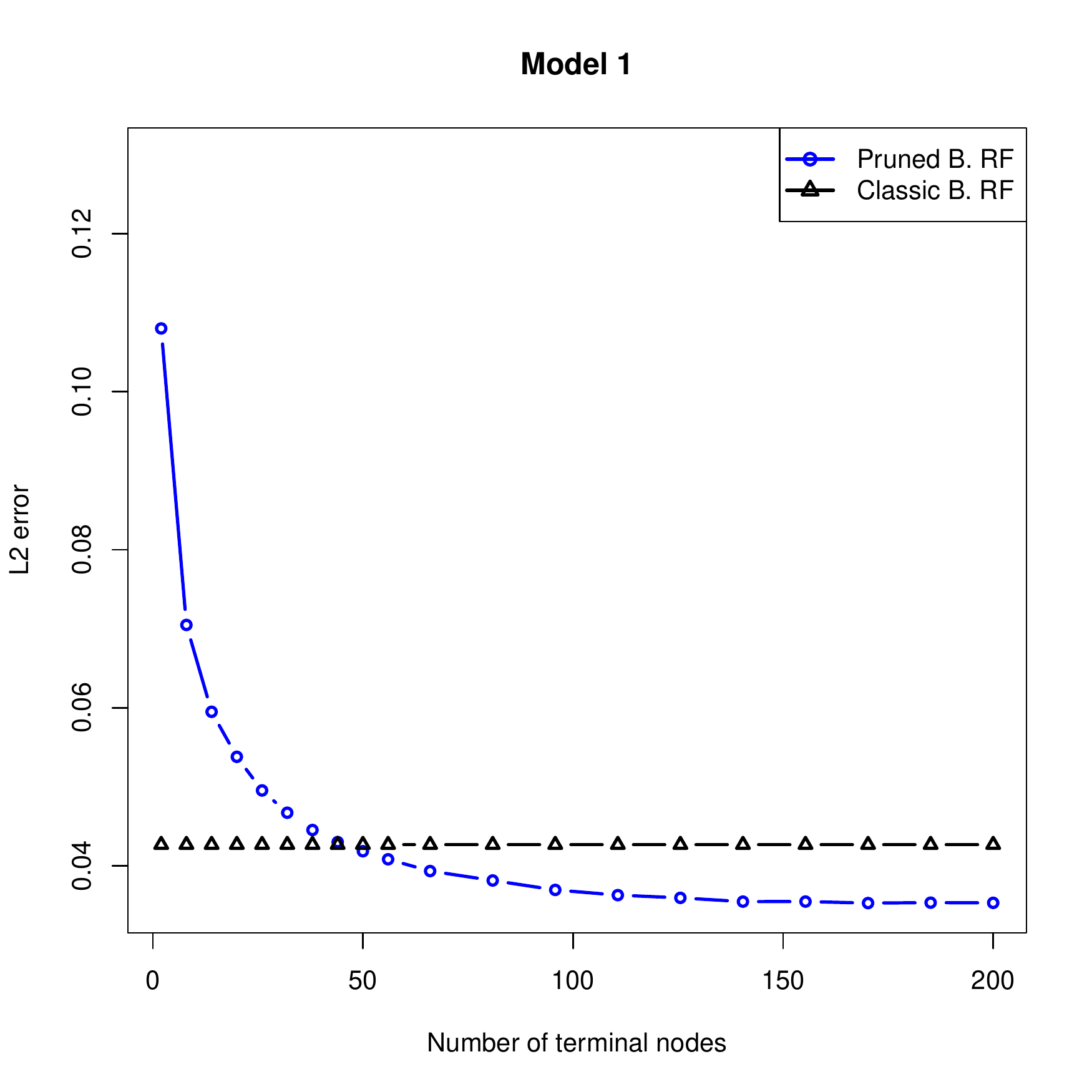}
\\
\includegraphics[scale=0.3]{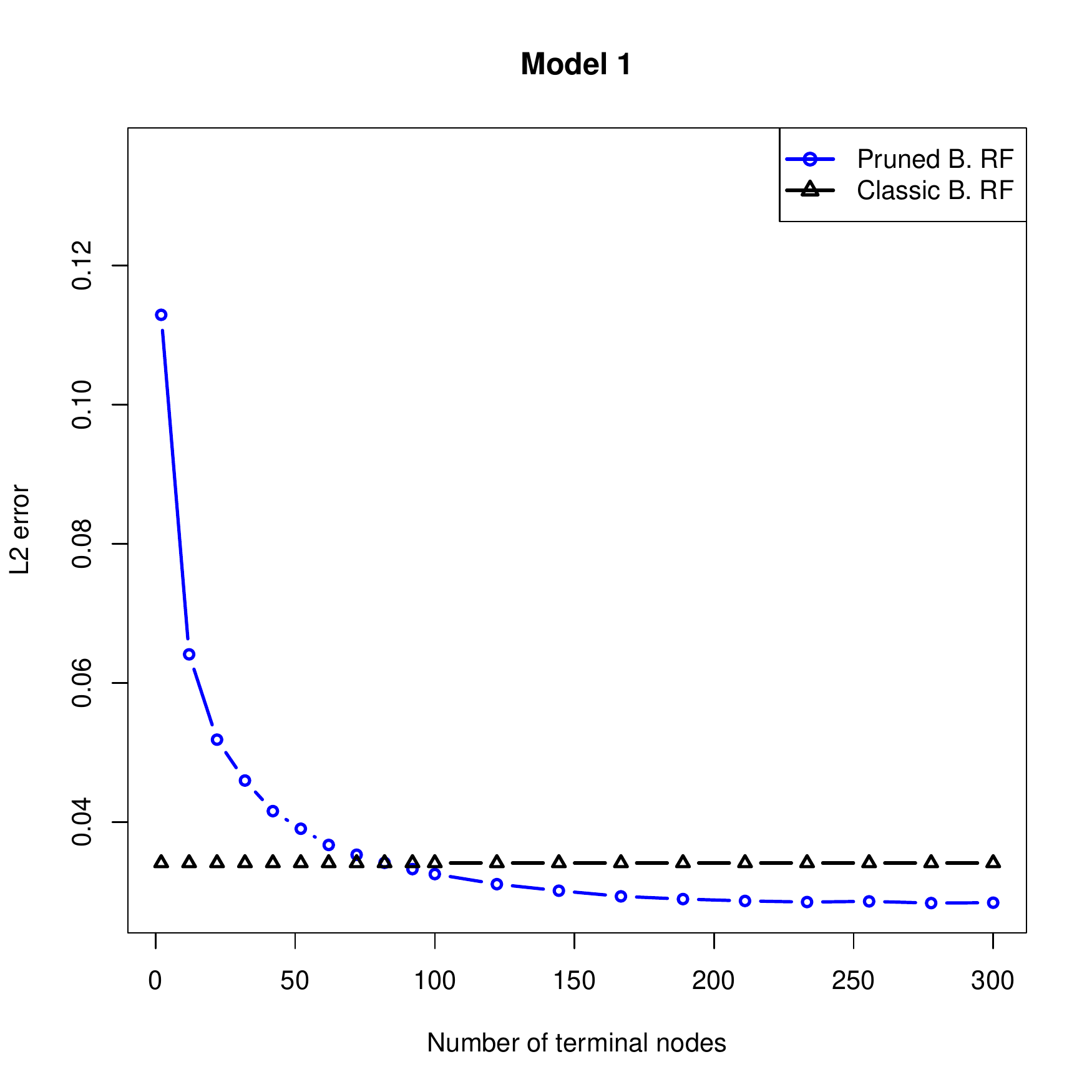}
& \includegraphics[scale=0.3]{choix_profondeur_2_model1_P_vs_B_RF-mod1_ntree500_nb-iter50nombrepoint400_mtrydefault.pdf} \\
\includegraphics[scale=0.3]{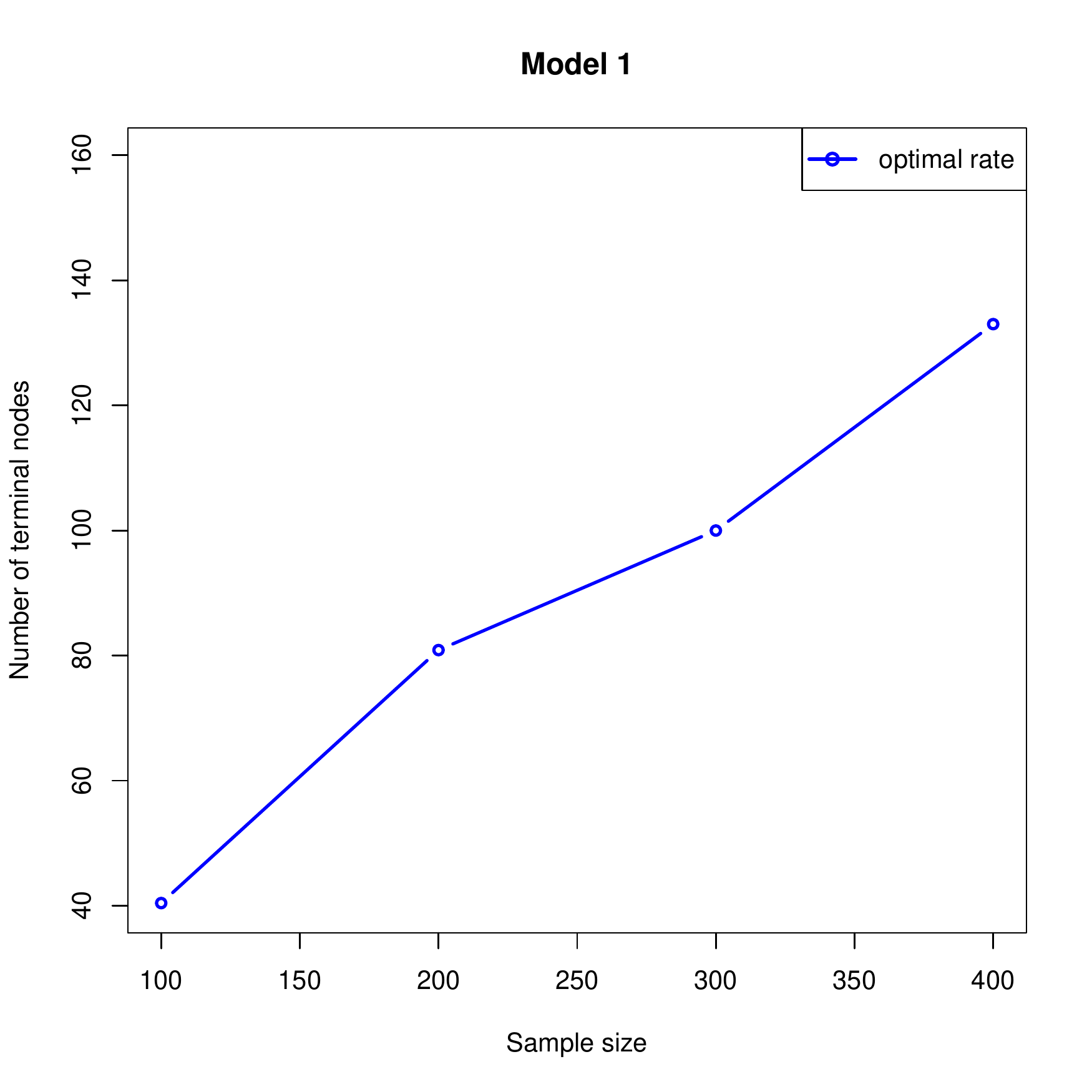}
\end{tabular}
\caption{Tuning of pruning parameter ({\bf Model 1}).}
\label{fig2}
\end{figure}

\begin{figure}[h!!]
\begin{tabular}{cc}
\includegraphics[scale=0.3]{choix_profondeur_2_model1_pruned_optimal_ntree500_nb-iter50_mtrydefault.pdf}
& \includegraphics[scale=0.3]{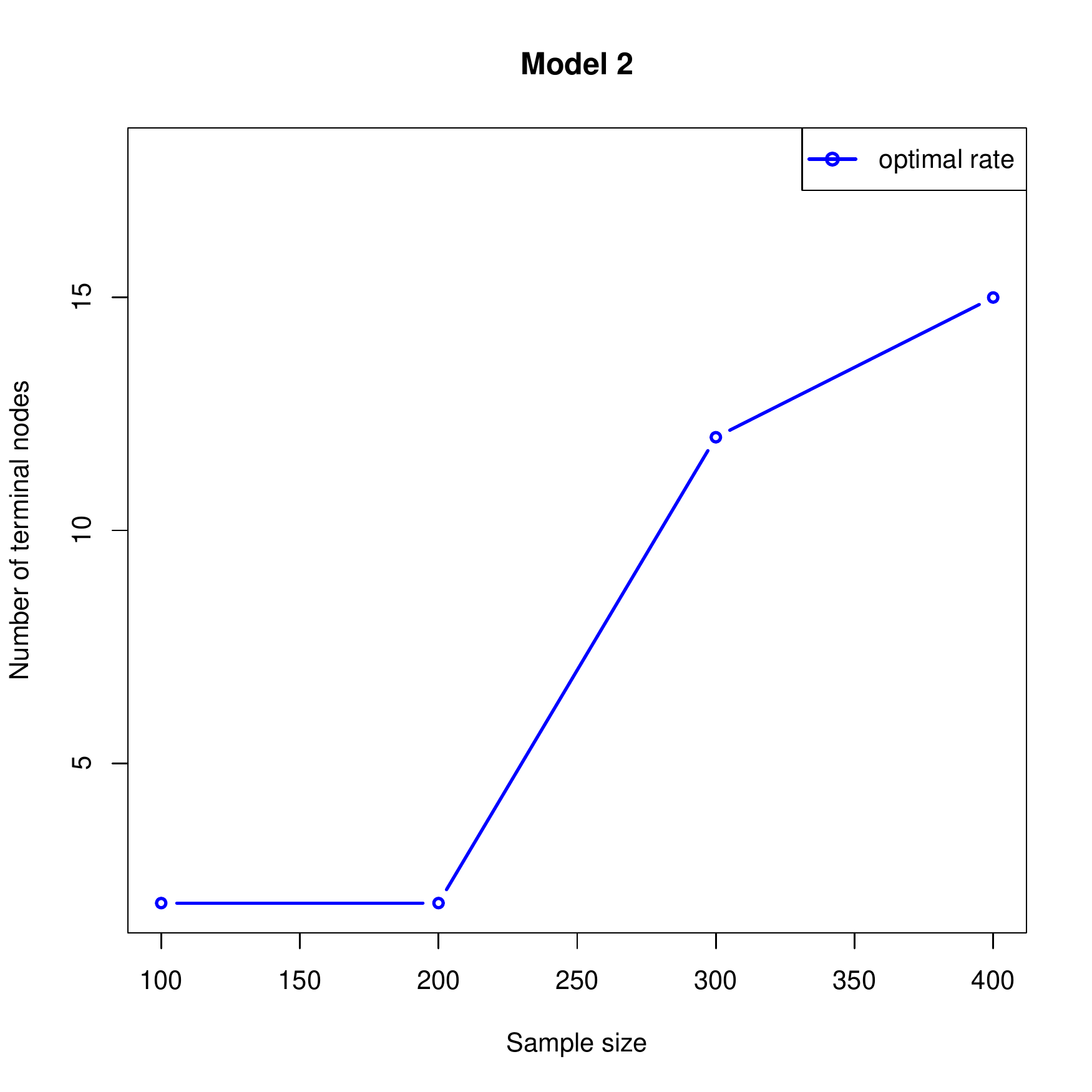}
\\
\includegraphics[scale=0.3]{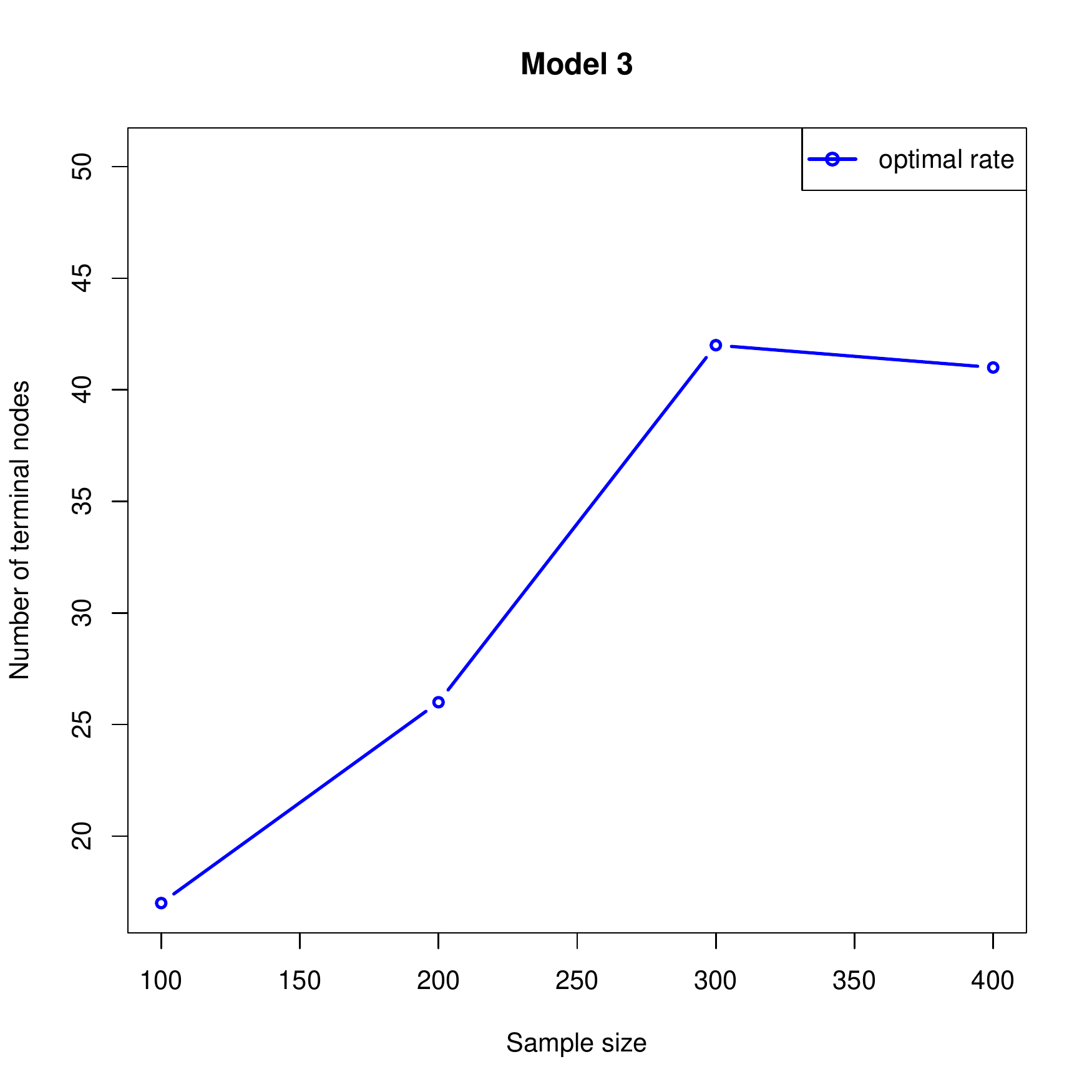}
& \includegraphics[scale=0.3]{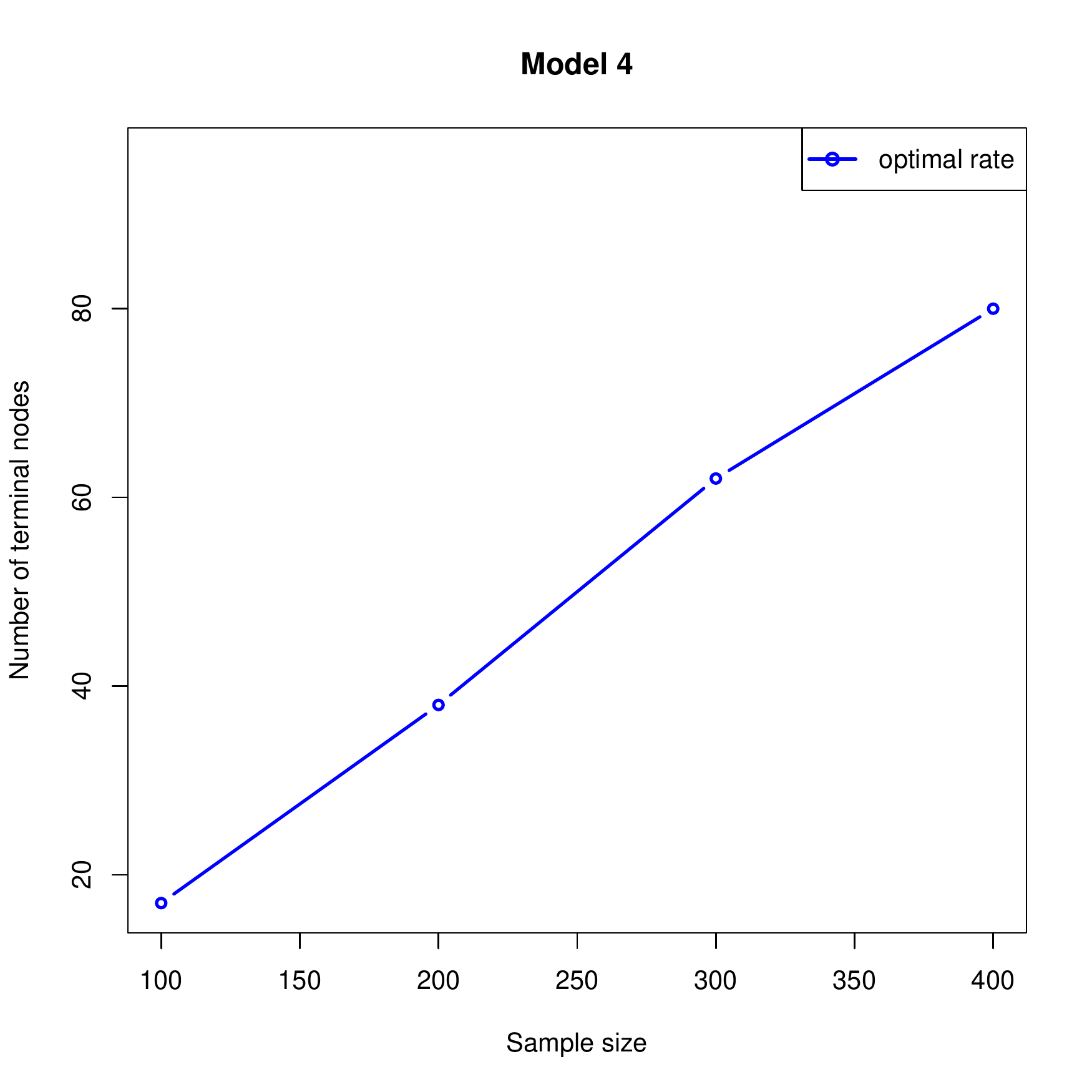}
\\
\includegraphics[scale=0.3]{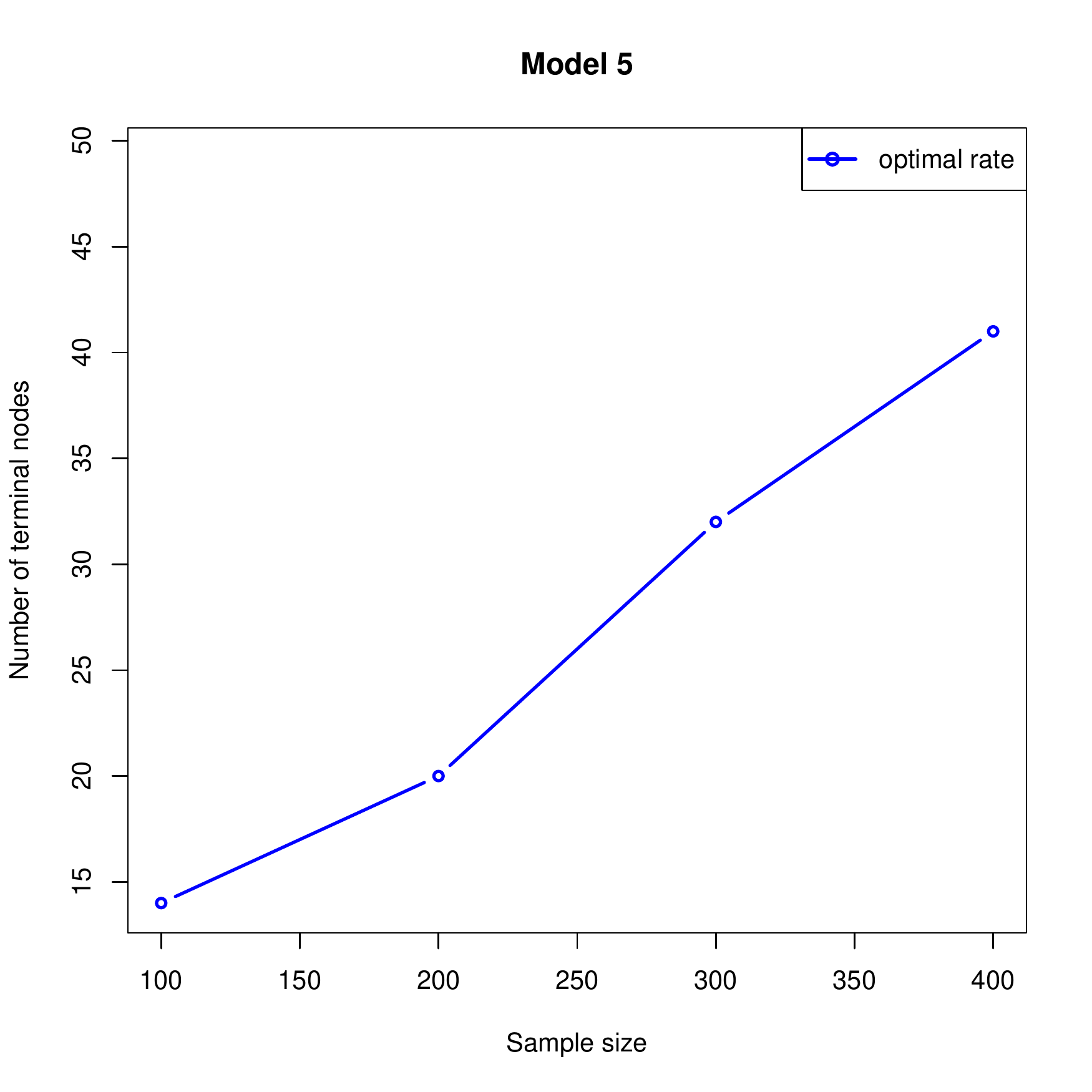}
& \includegraphics[scale=0.3]{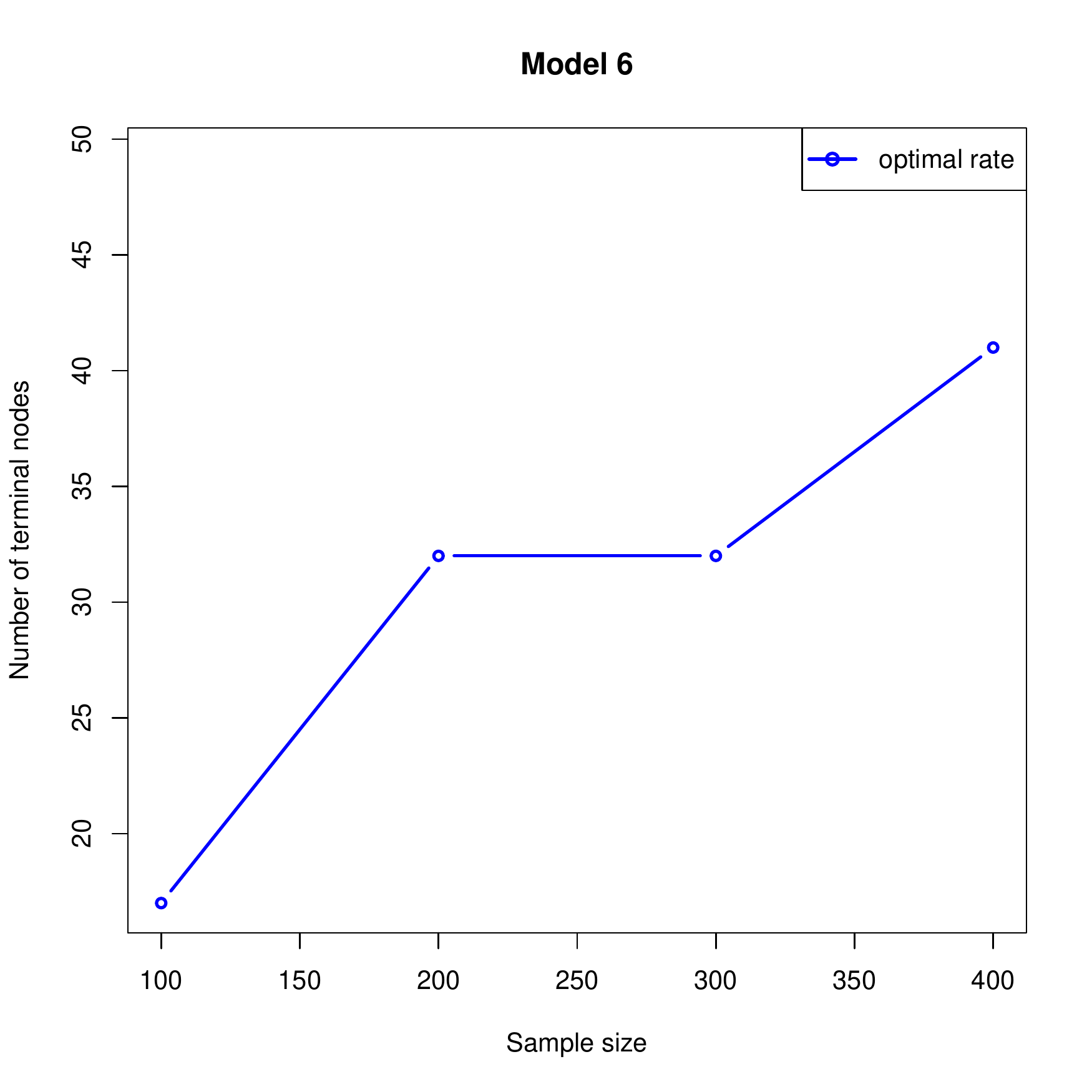}
\\
\includegraphics[scale=0.3]{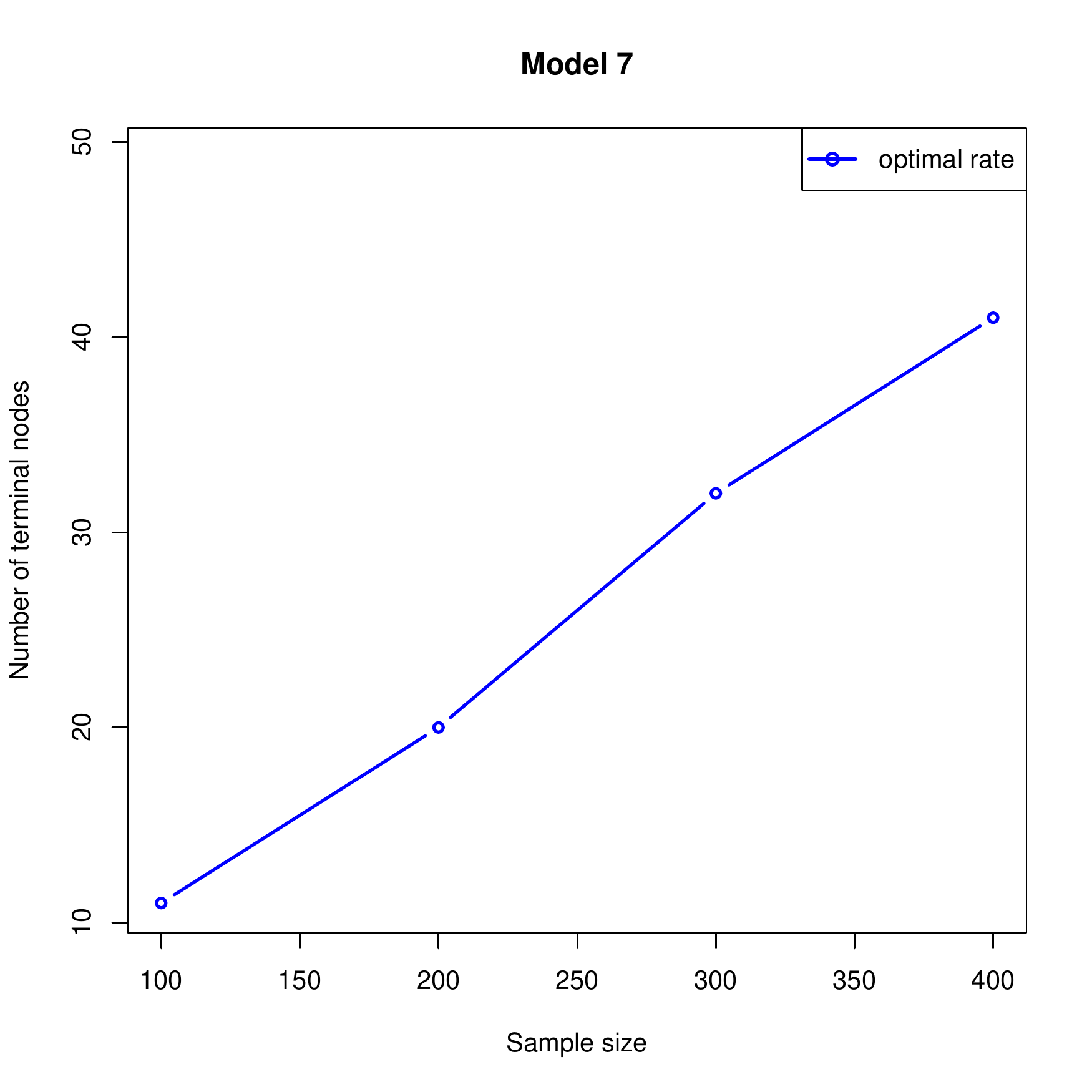}
& \includegraphics[scale=0.3]{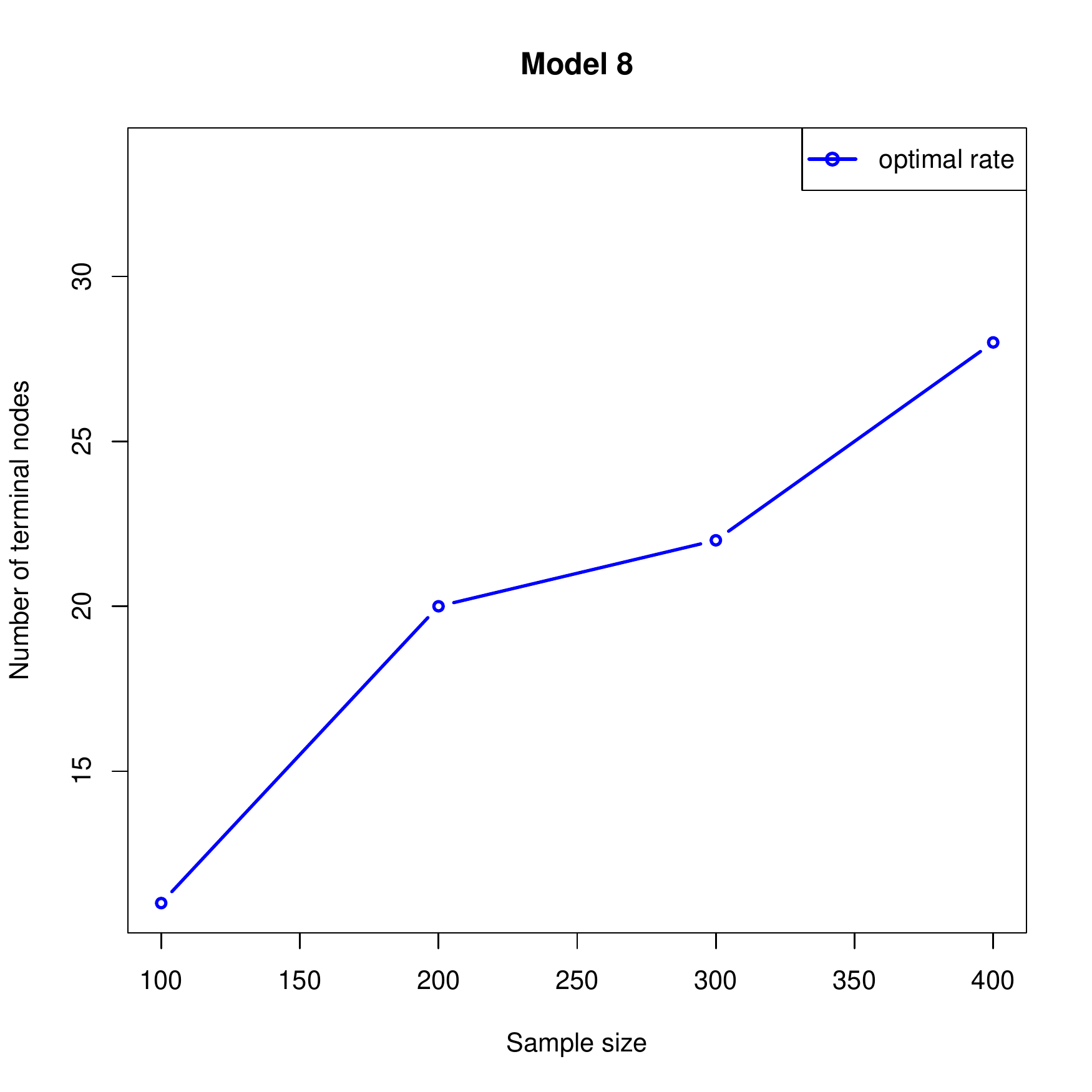}
\end{tabular}
\caption{Optimal values of pruning parameter for {\bf Models 1-8}.}
\label{fig3}
\end{figure}

We also present the $\mathbb{L}^2$ errors of pruned Breiman's forests for different pruning percentages ($10\%$, $30\%$, $63\%$, $80\%$ and $100\%$), when the sample size is fixed, for {\bf Models 1-8}. The results can be found in Figure \ref{fig4} in the form of box-plots. We can notice that the forests with a $30\%$ pruning (\textit{i.e.}, such that \texttt{maxnodes}$=0.3n$) give similar ({\bf Model 5}) or best ({\bf Model 6}) performances than the standard Breiman's forest.

\begin{figure}[h!!]
\begin{tabular}{cc}
\includegraphics[scale=0.3]{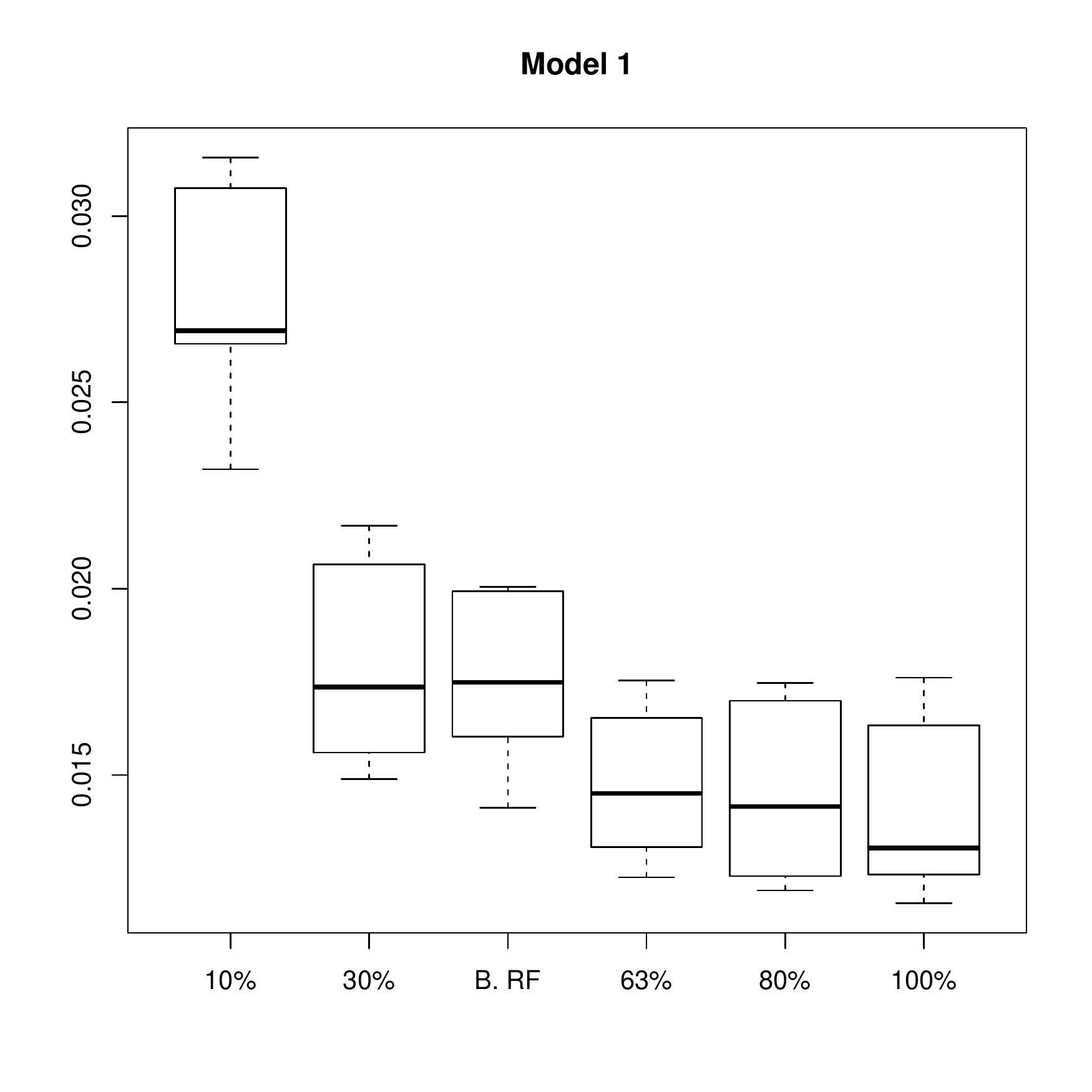}
& \includegraphics[scale=0.3]{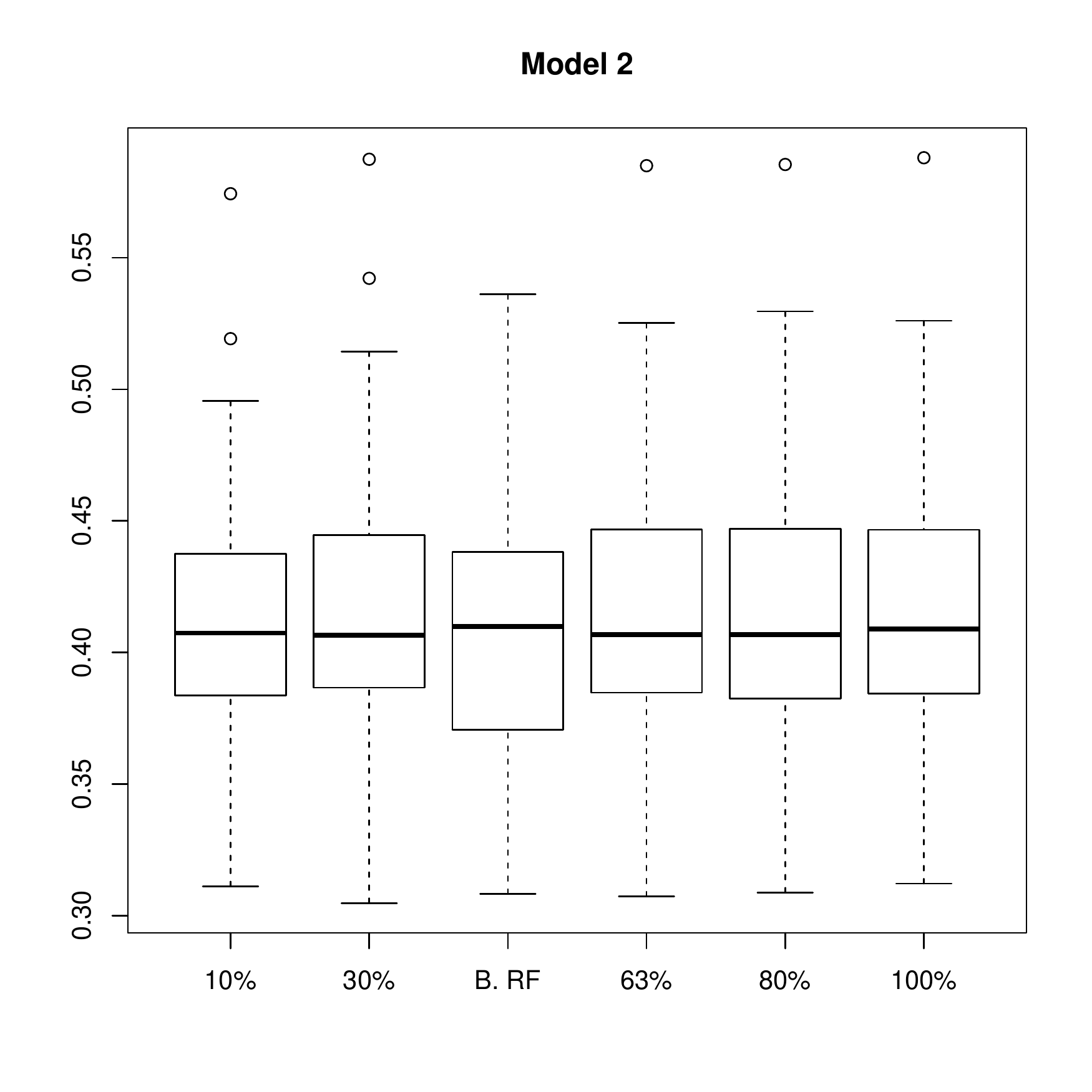}
\\
\includegraphics[scale=0.3]{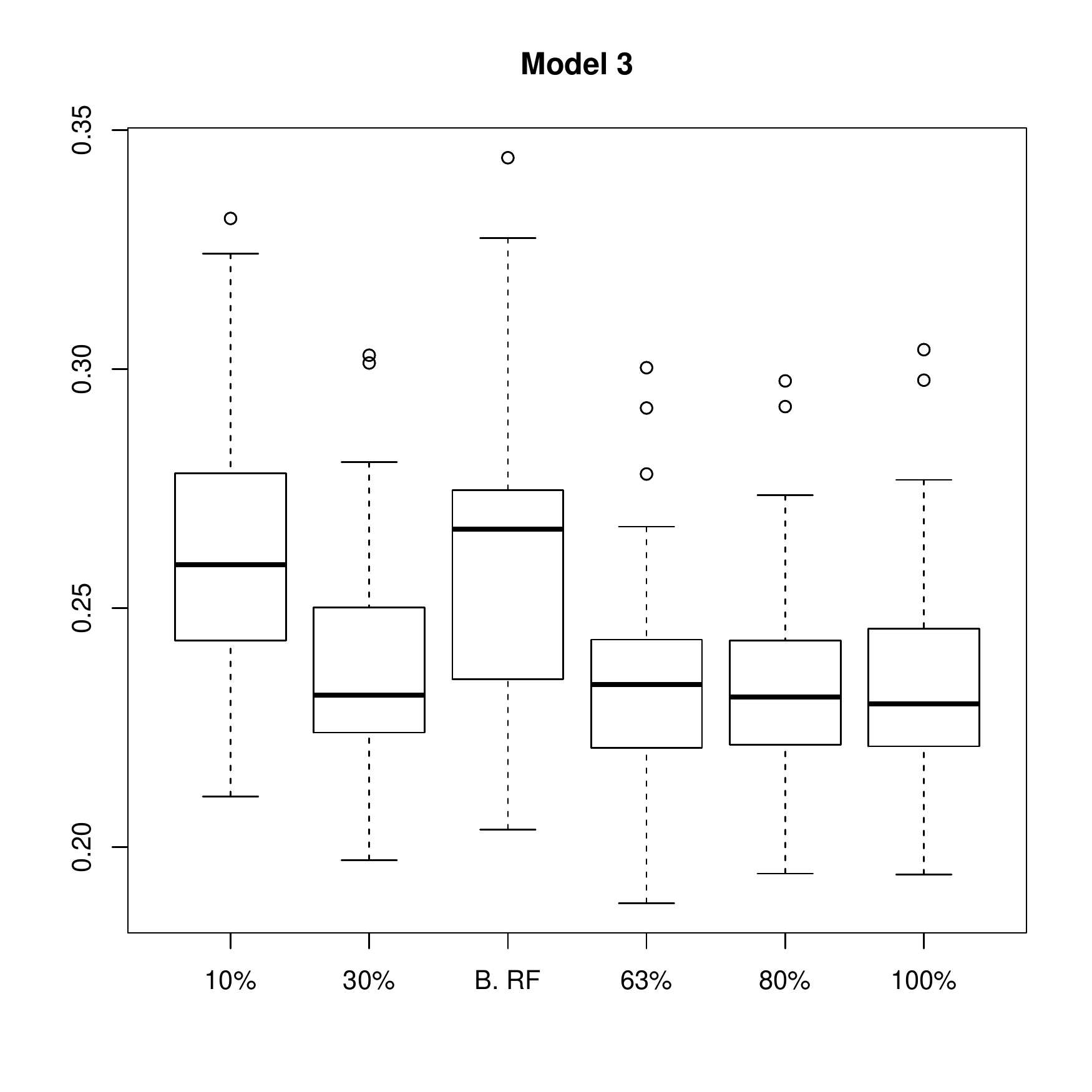}
& \includegraphics[scale=0.3]{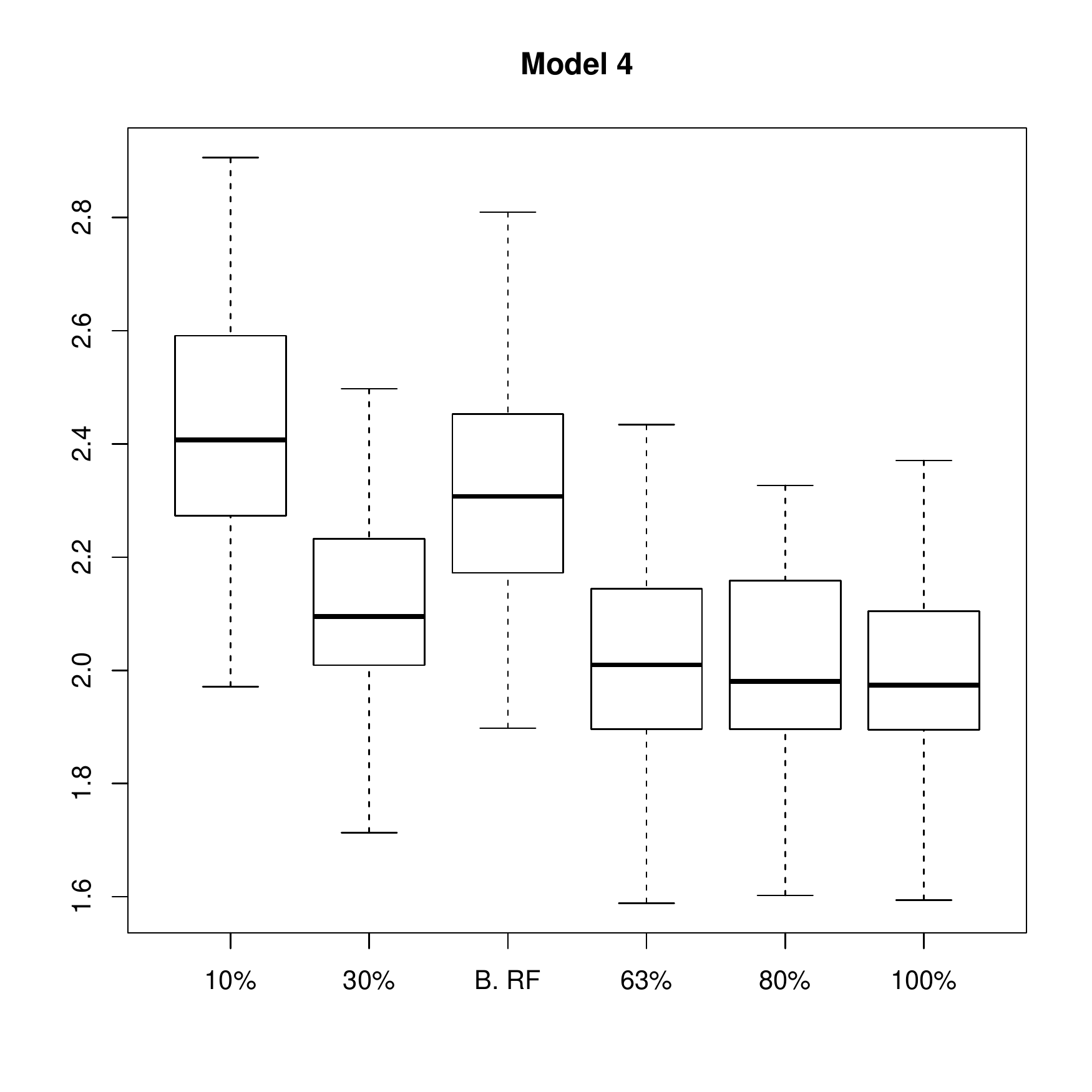}
\\
\includegraphics[scale=0.3]{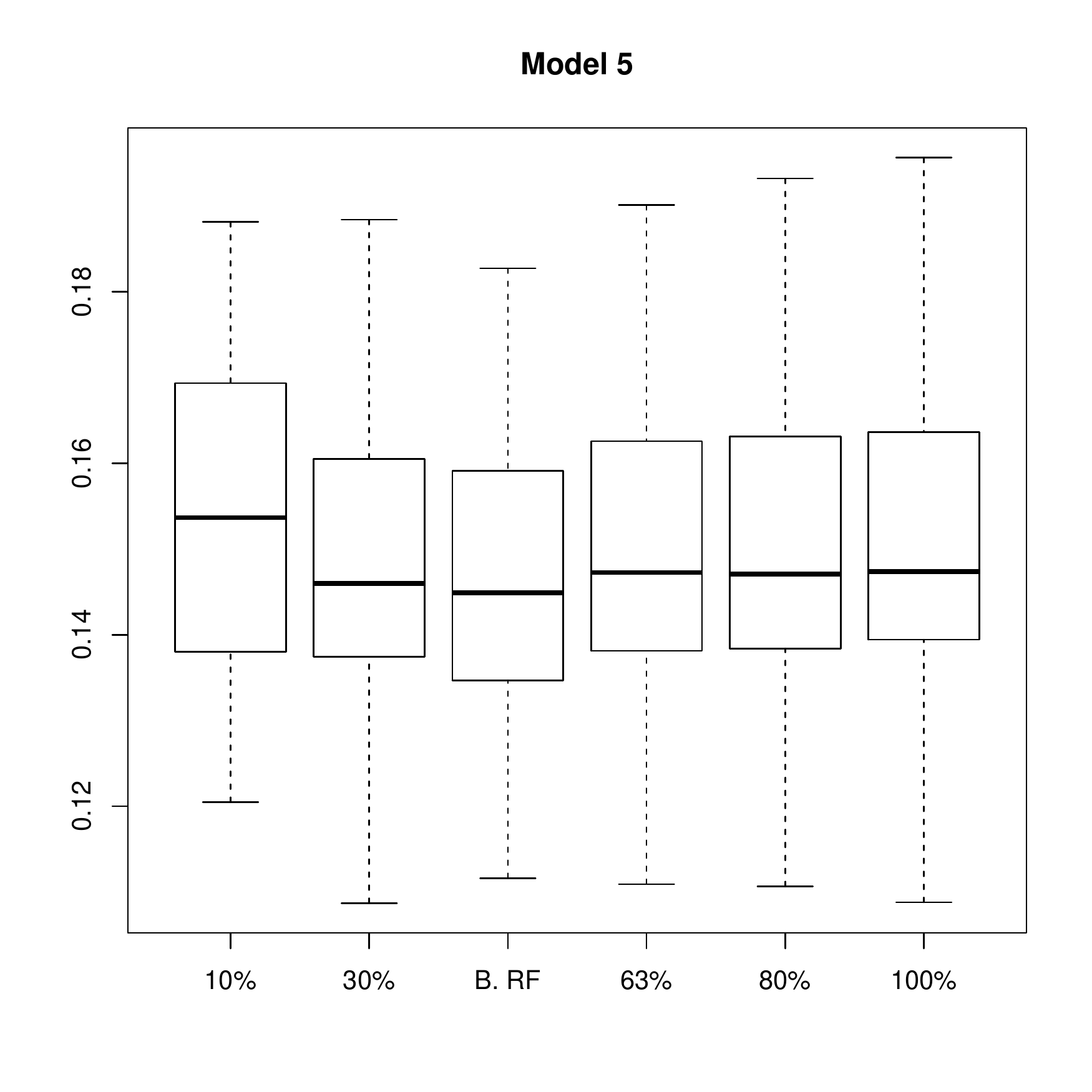}
& \includegraphics[scale=0.3]{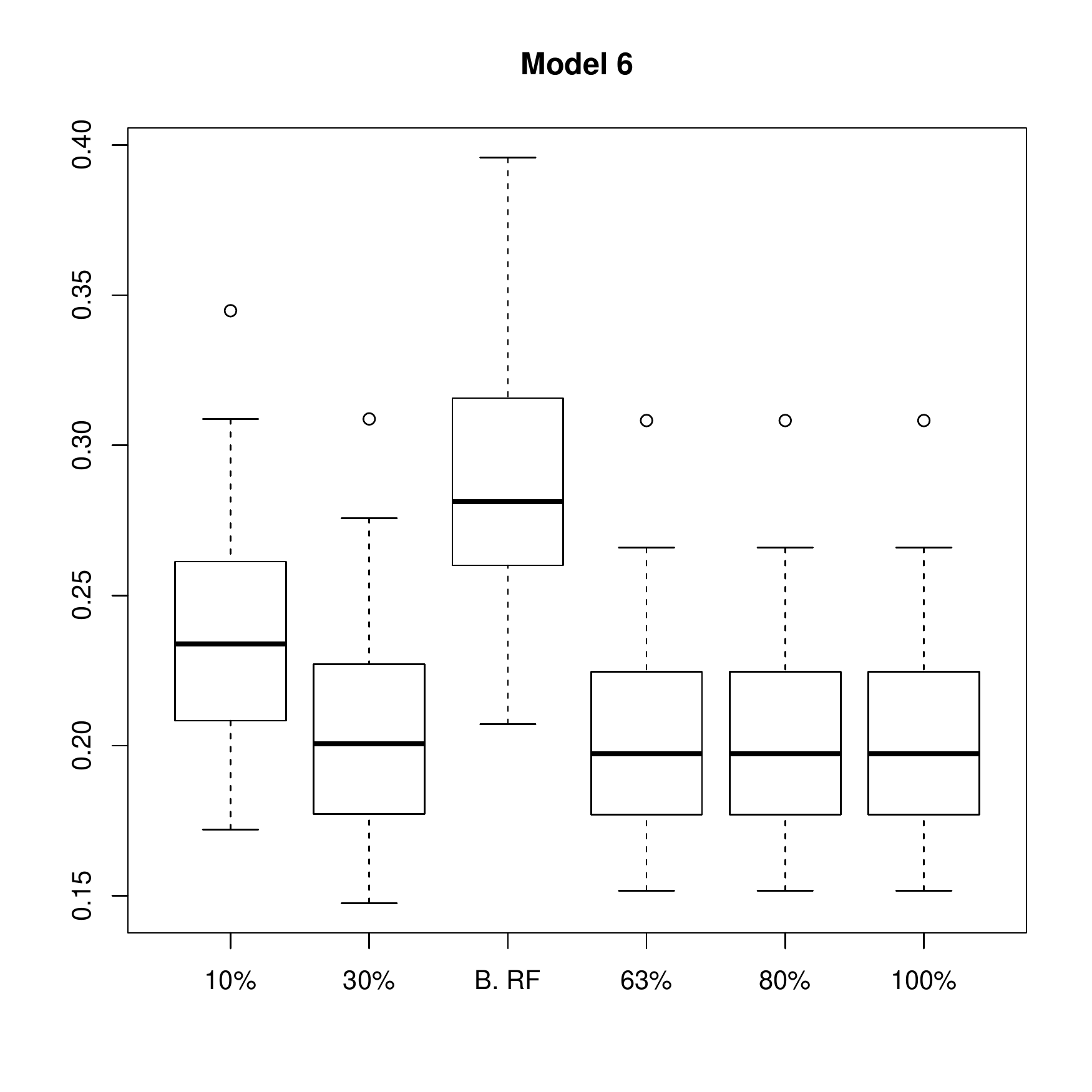}
\\
\includegraphics[scale=0.3]{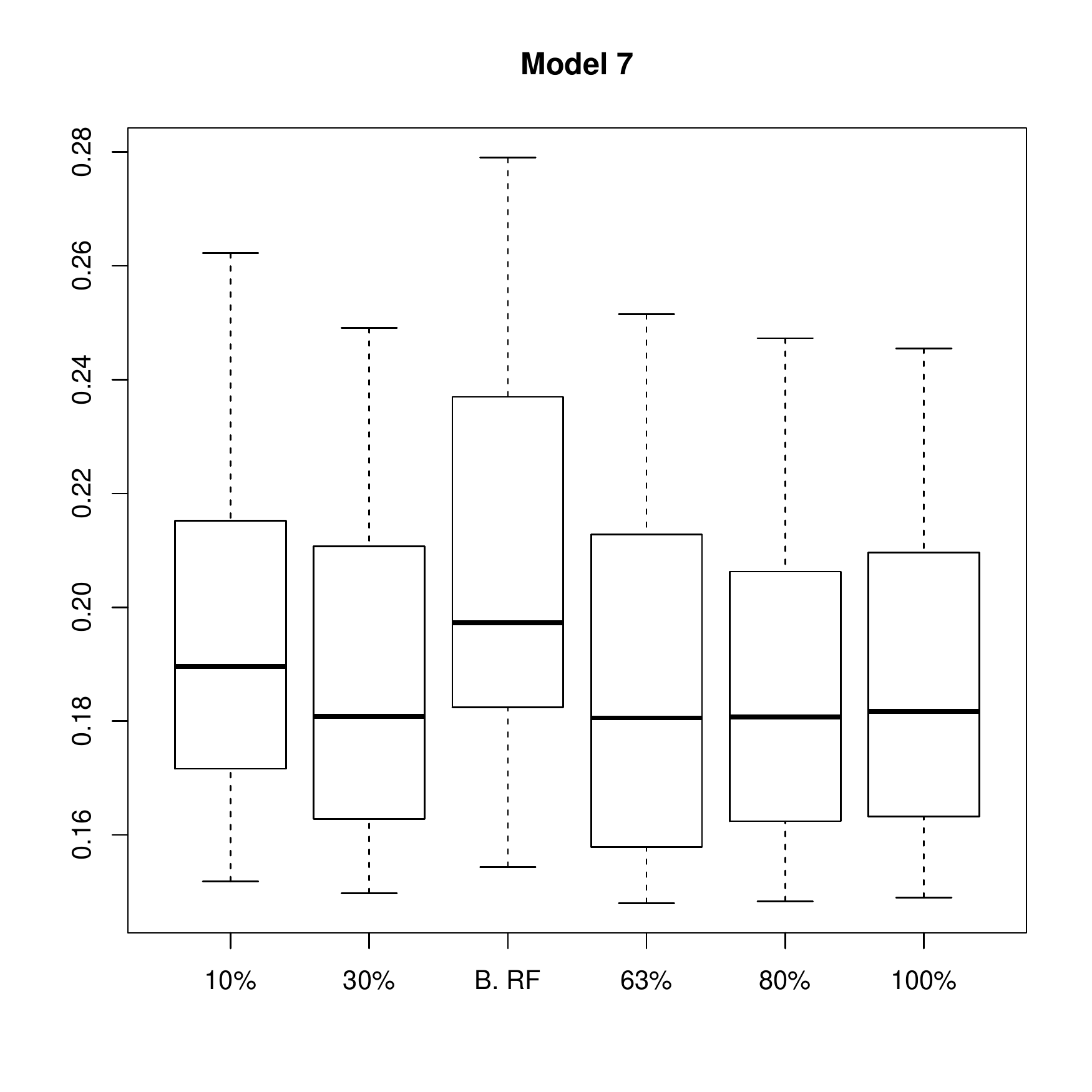}
& \includegraphics[scale=0.3]{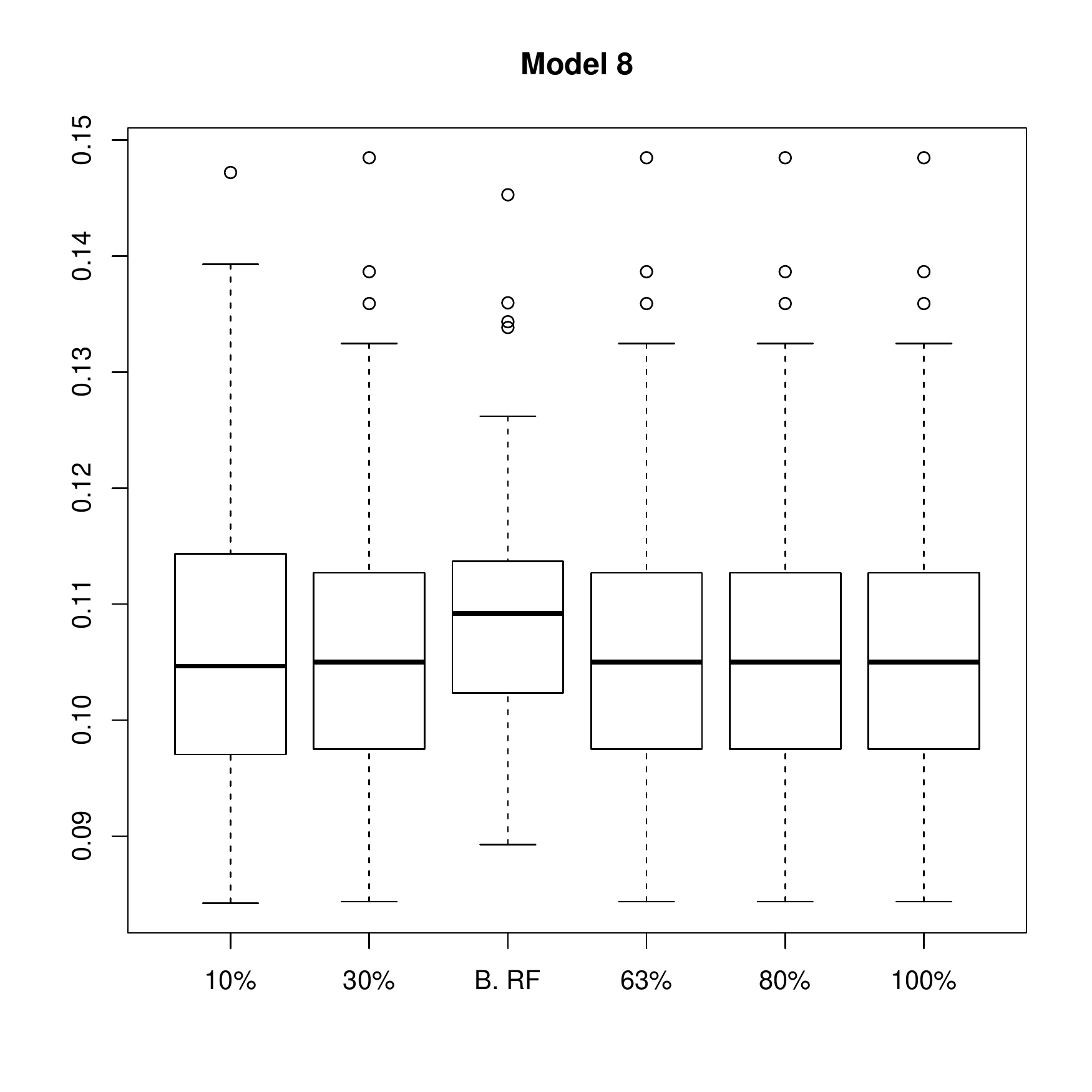}
\end{tabular}
\caption{Comparison of standard Breiman's forests against several pruned Breiman forests in terms of $\mathbb{L}^2$ error.}
\label{fig4}
\end{figure}

\subsection{Subsampling}

In this section, we study the influence of subsampling on Breiman's forests by comparing the original Breiman's procedure with subsampled Breiman's forests. Subsampled Breiman's forests are nothing but Breiman's forests where the subsampling step consists in choosing $a_n$ observations without replacement (instead of choosing $n$ observations among $n$ with replacement), where $a_n$ is the subsample size. Comparison of Breiman's forests and subsampled Breiman's forests is presented in  Figure \ref{fig5} for the {\bf Models 1-8} introduced previously. More precisely, we can see the evolution of the empirical risk of subsampled forests with different subsampling values, and the empirical risk of the Breiman's forest as a reference. Every sub-figure of Figure \ref{fig5} presents forests built with $500$ trees. The printed errors are obtained by averaging the risks of $50$ forests.

\begin{figure}[h!!]
\begin{tabular}{cc}
\includegraphics[scale=0.3]{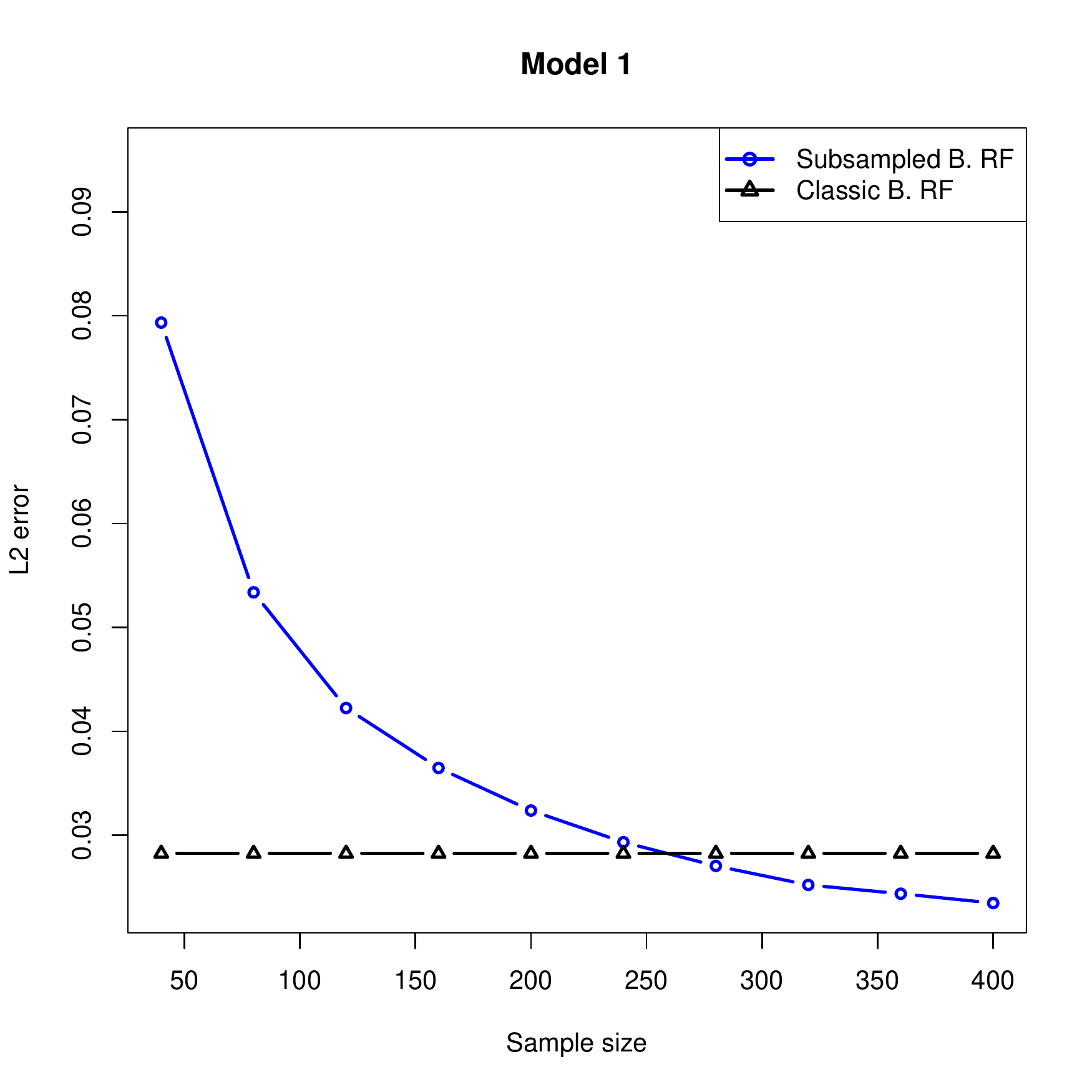}
& \includegraphics[scale=0.3]{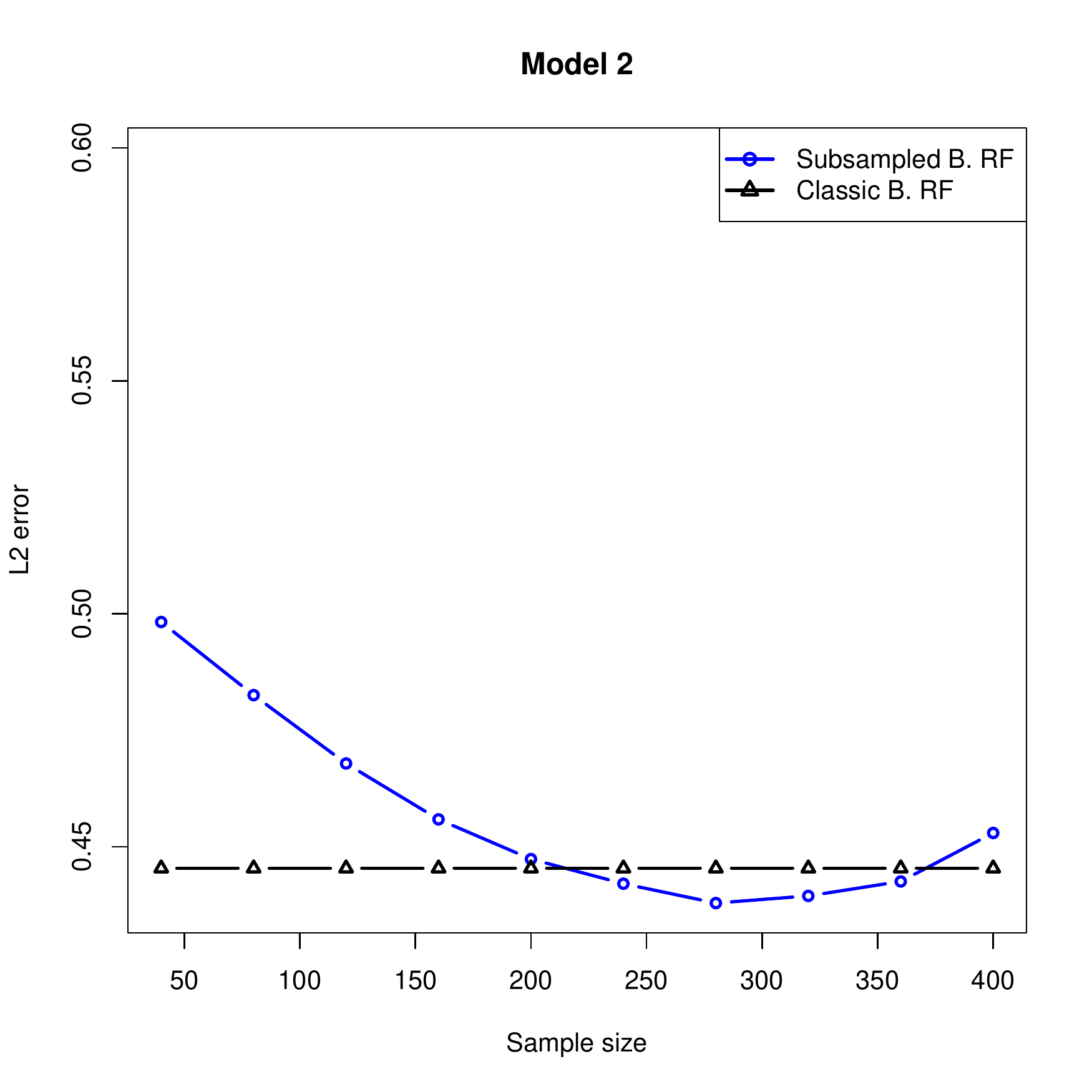}
\\
\includegraphics[scale=0.3]{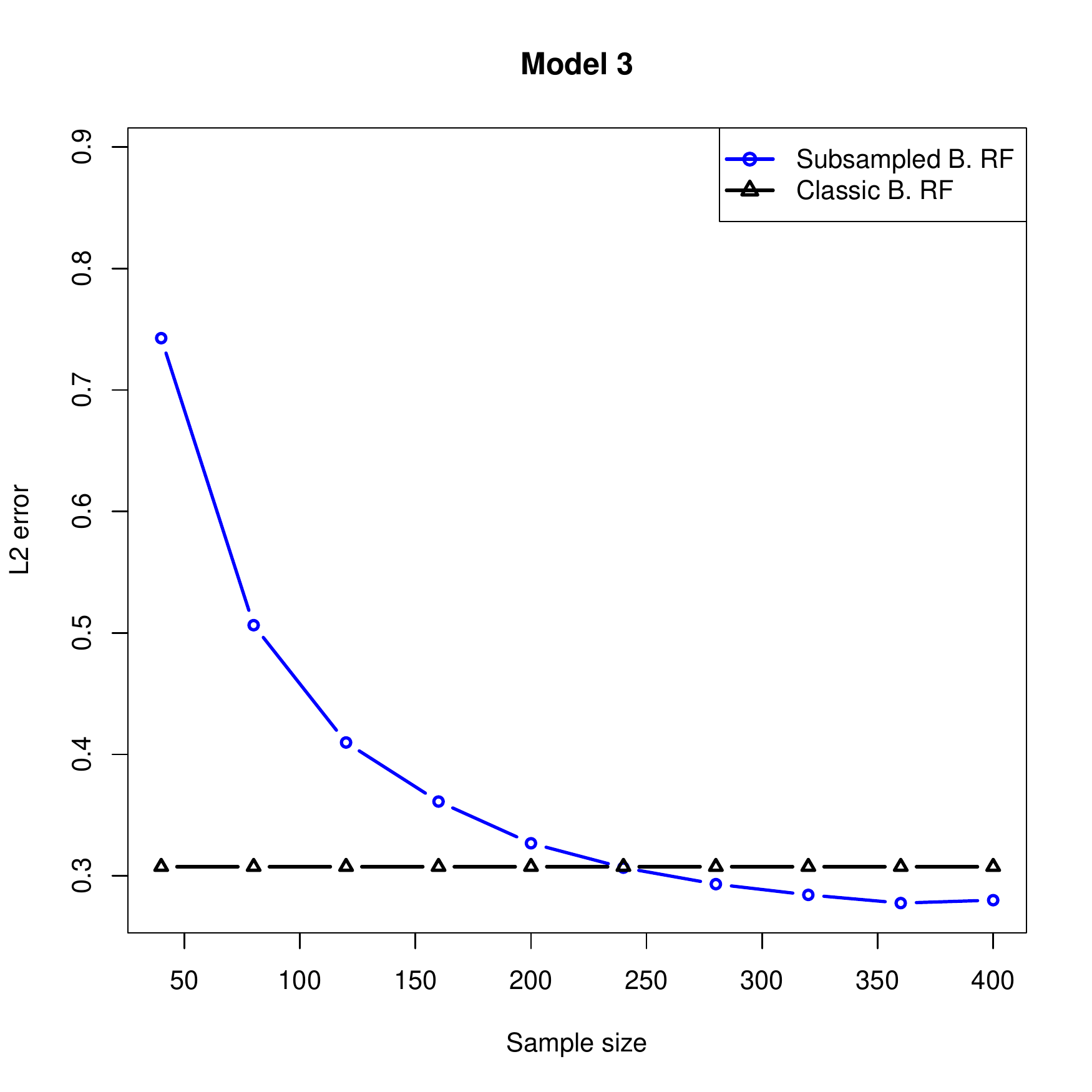}
& \includegraphics[scale=0.3]{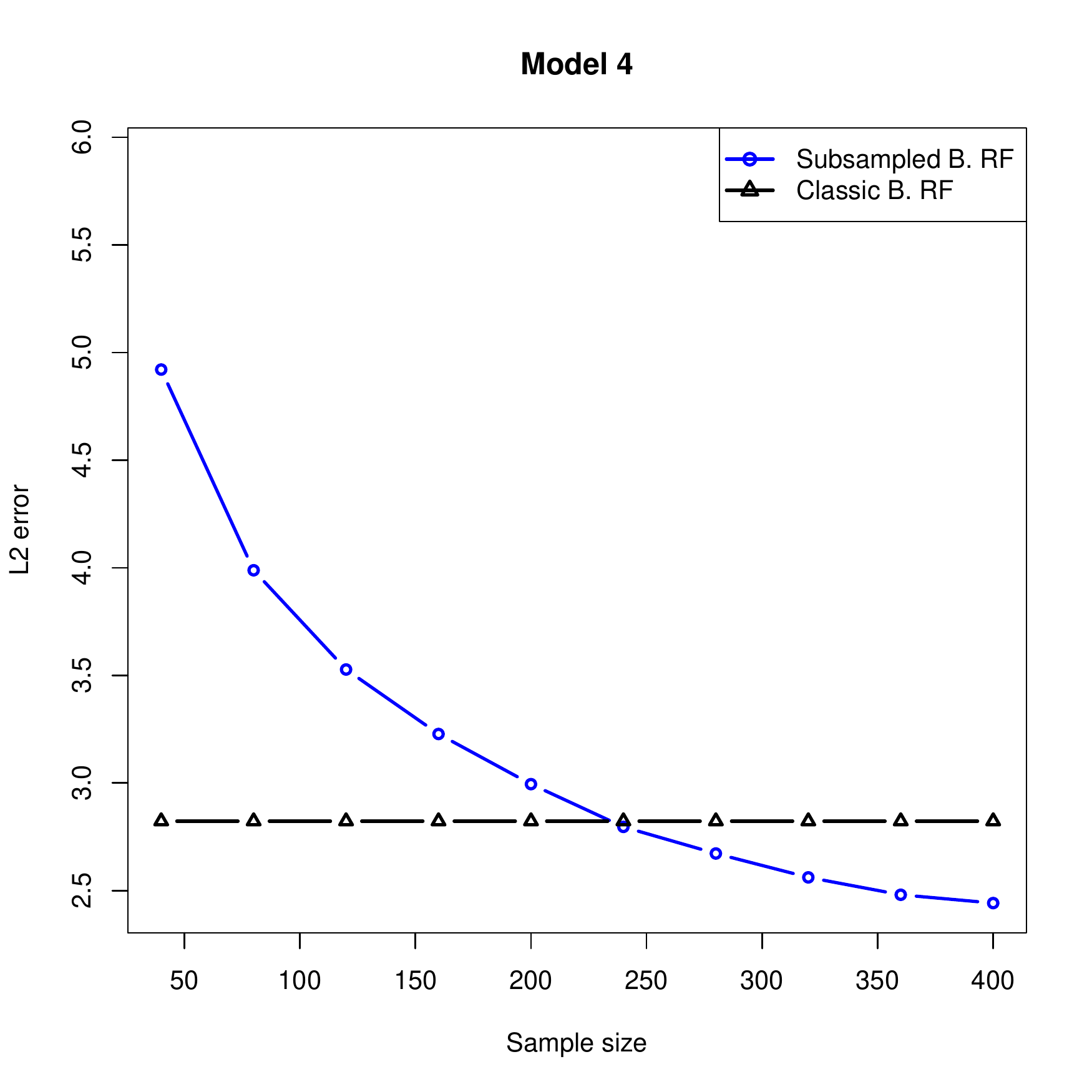}
\\
\includegraphics[scale=0.3]{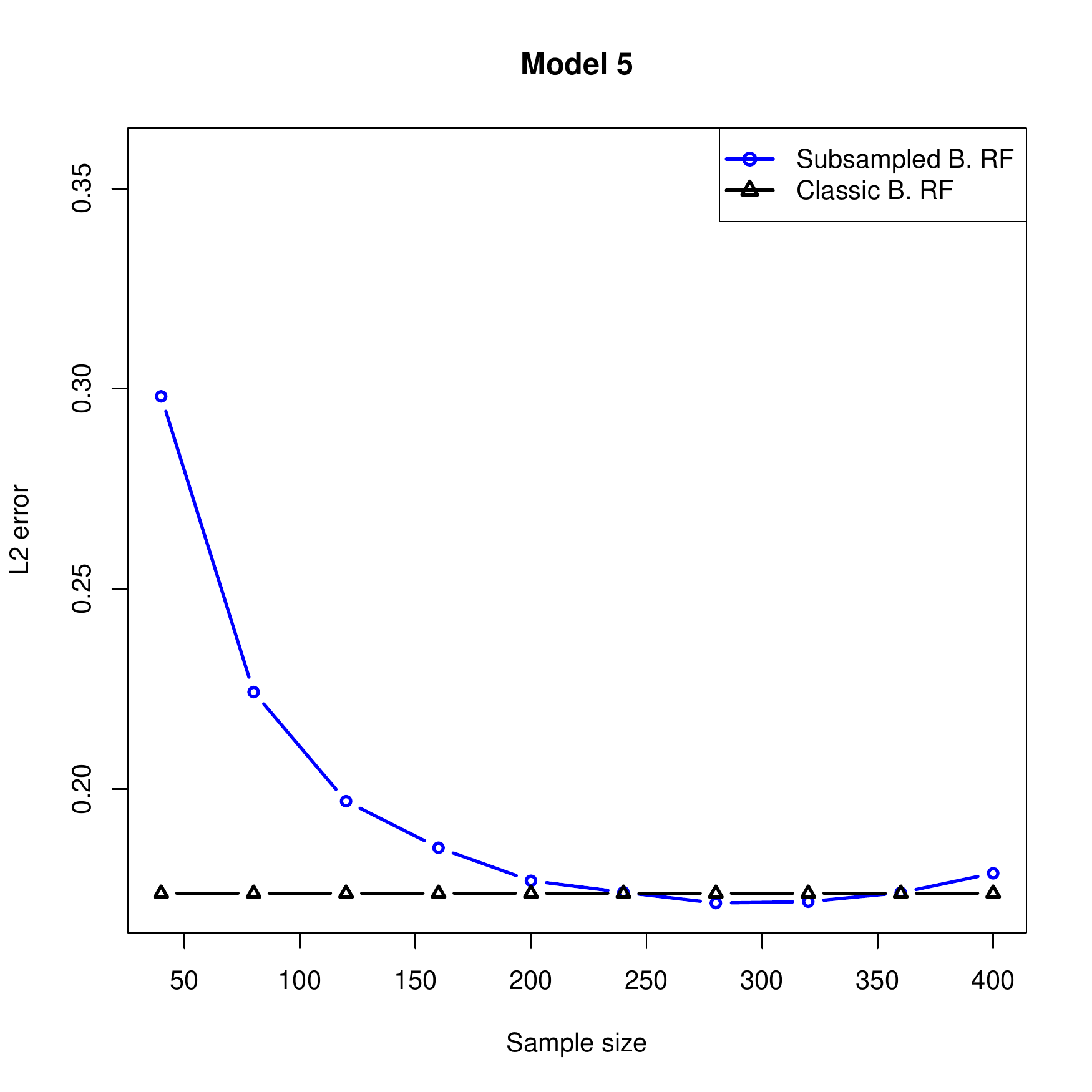}
& \includegraphics[scale=0.3]{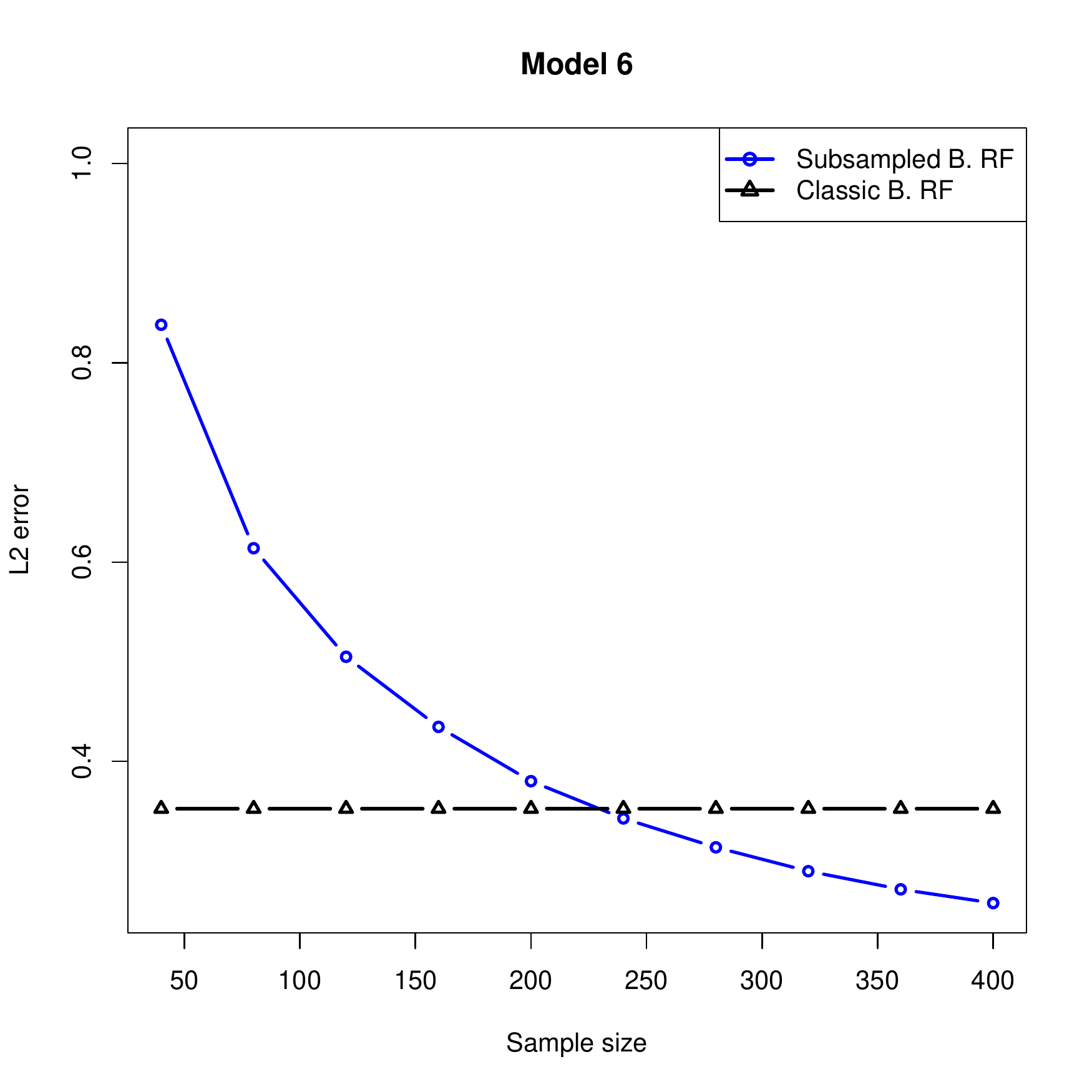}
\\
\includegraphics[scale=0.3]{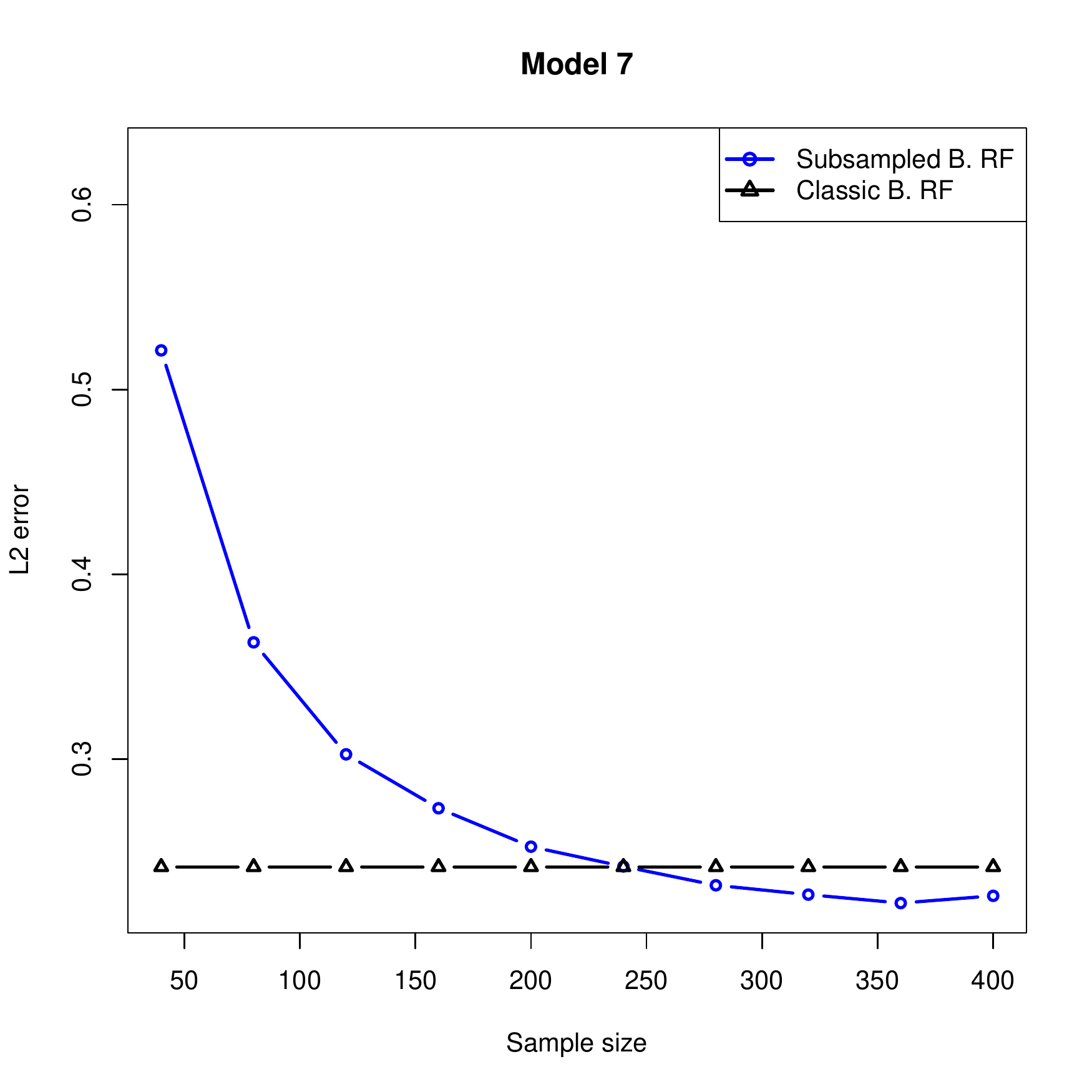}
& \includegraphics[scale=0.3]{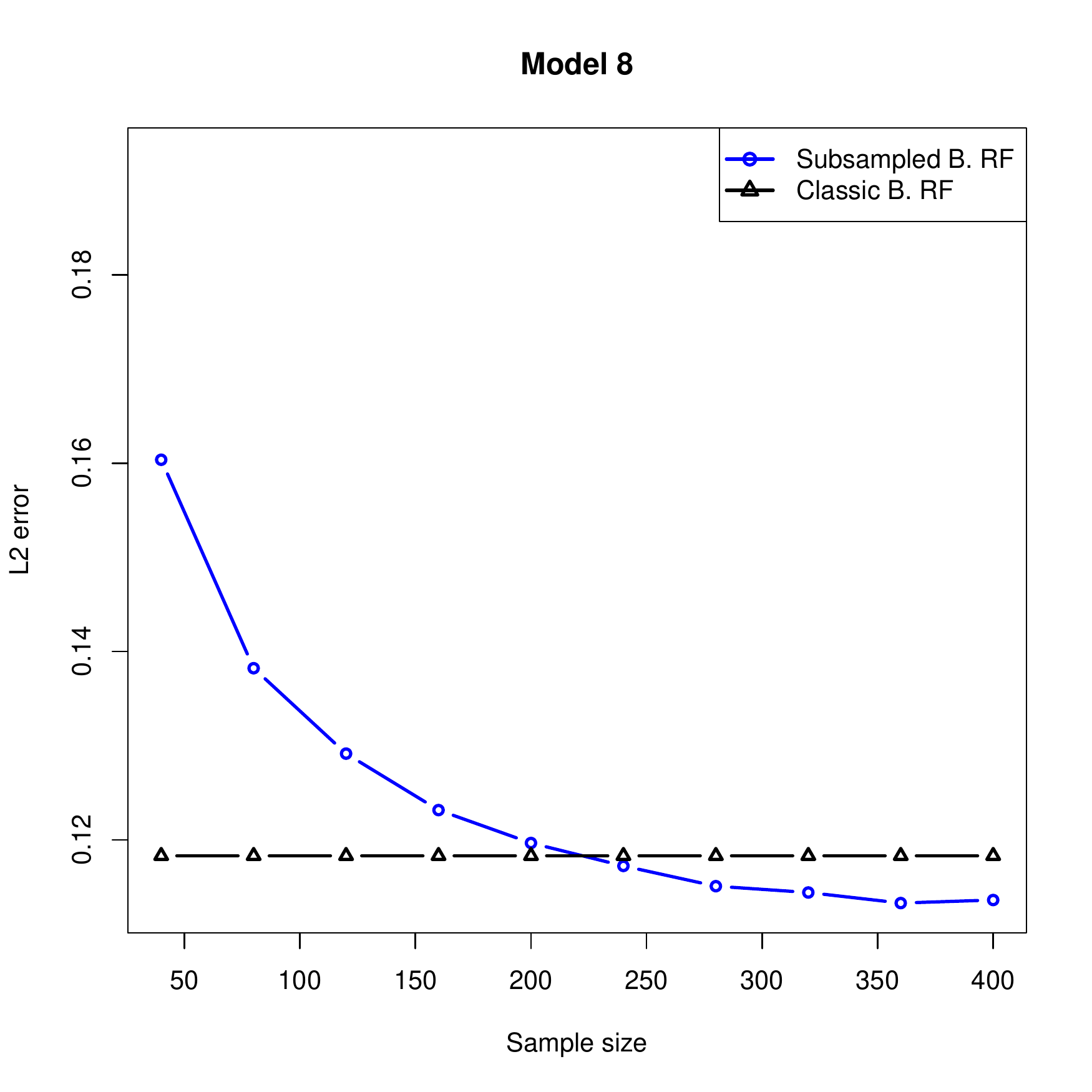}
\\
\end{tabular}
\caption{Standard Breiman Forests versus Subsampled Breiman Forests.}
\label{fig5}
\end{figure}

For every model, we can notice that subsampled forests performance is comparable with the one of standard Breiman's forest, as long as the subsampling parameter is well chosen. For example, a forest with a subsampling rate of $50\%$ has the same empirical risk as the standard Breiman's forest, for {\bf Model 2}. Once again, the similarity between bootstrapped and subsampled Breiman's forests moves aside bootstrap as a performance criteria. As it is shown in Corollary \ref{Corollary_2} and the simulations, subsampling and bootstrap of the data set are equivalent.

We want of course to study the optimal subsampling size (\texttt{samplesize} parameter in the R algorithm). For this, we draw the curves of Figure \ref{fig5} for different learning data set sizes, the same as in Figure \ref{fig2}. We also copy in an other graph the optimal subsample size $a_n^{\star}$ that we found for each size of the learning set.  The optimal subsampling size $a_n^{\star}$ is defined as 
\begin{align*}
a_n^{\star} = \min \{a : |\hat{L}_{a} - \min_s \hat{L}_s| < 0.05 \times (\max_s \hat{L}_s - \min_s \hat{L}_s) \}
\end{align*}
where $\hat{L}_{s}$ is the risk of the forest with parameter $\texttt{sampsize} = s$. The results can be seen in Figure \ref{fig6}. The optimal subsampling size seems, once again, to be proportional to the sample size, as illustrated in the last sub-figure of Figure \ref{fig6}. For {\bf Model 1}, the optimal value $a_n^{\star}$ seems to be close to $0.8n$. The other models show a similar behaviour, as it can be seen in Figure \ref{fig7}.

\begin{figure}[h!!]
\begin{tabular}{cc}
\includegraphics[scale=0.3]{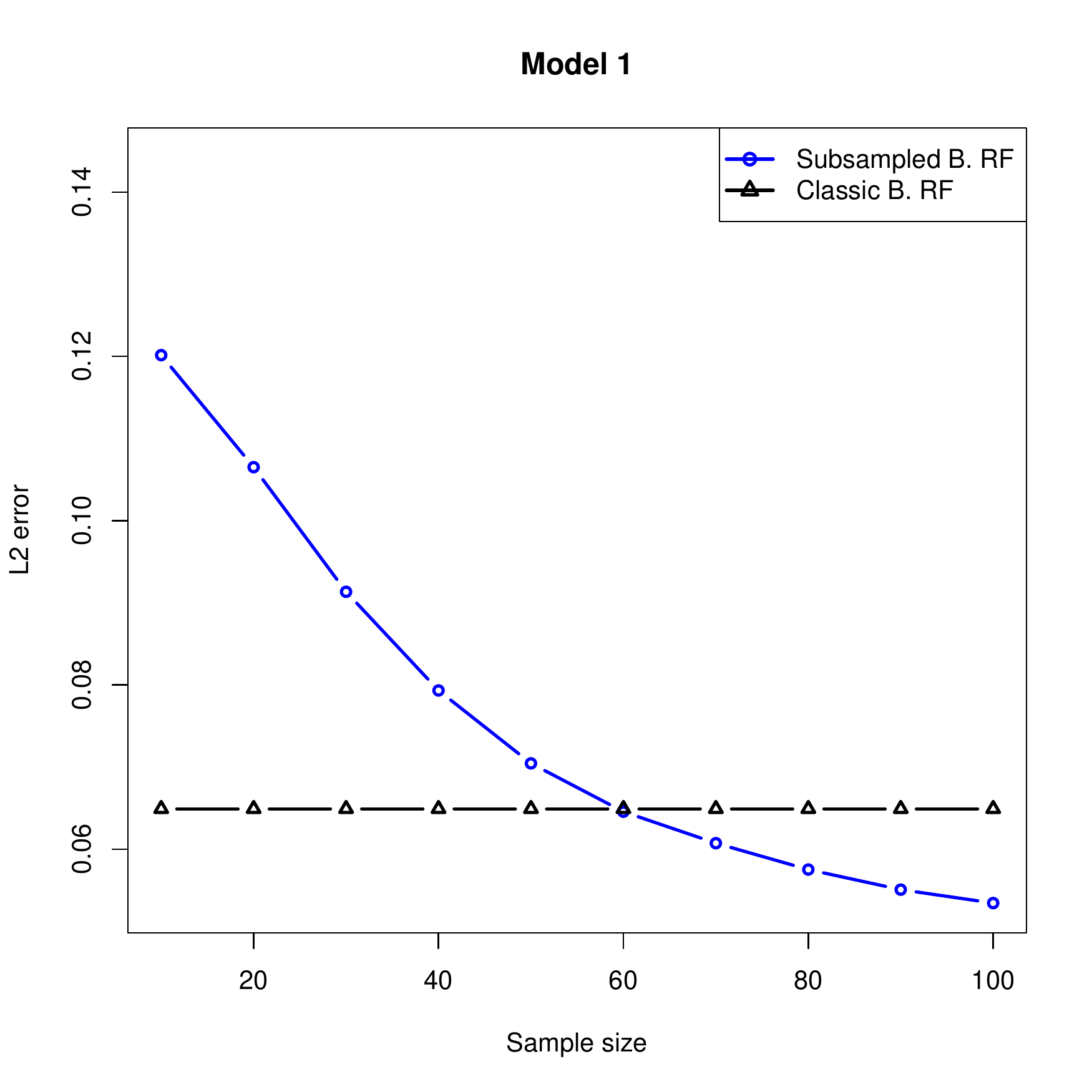}
& \includegraphics[scale=0.3]{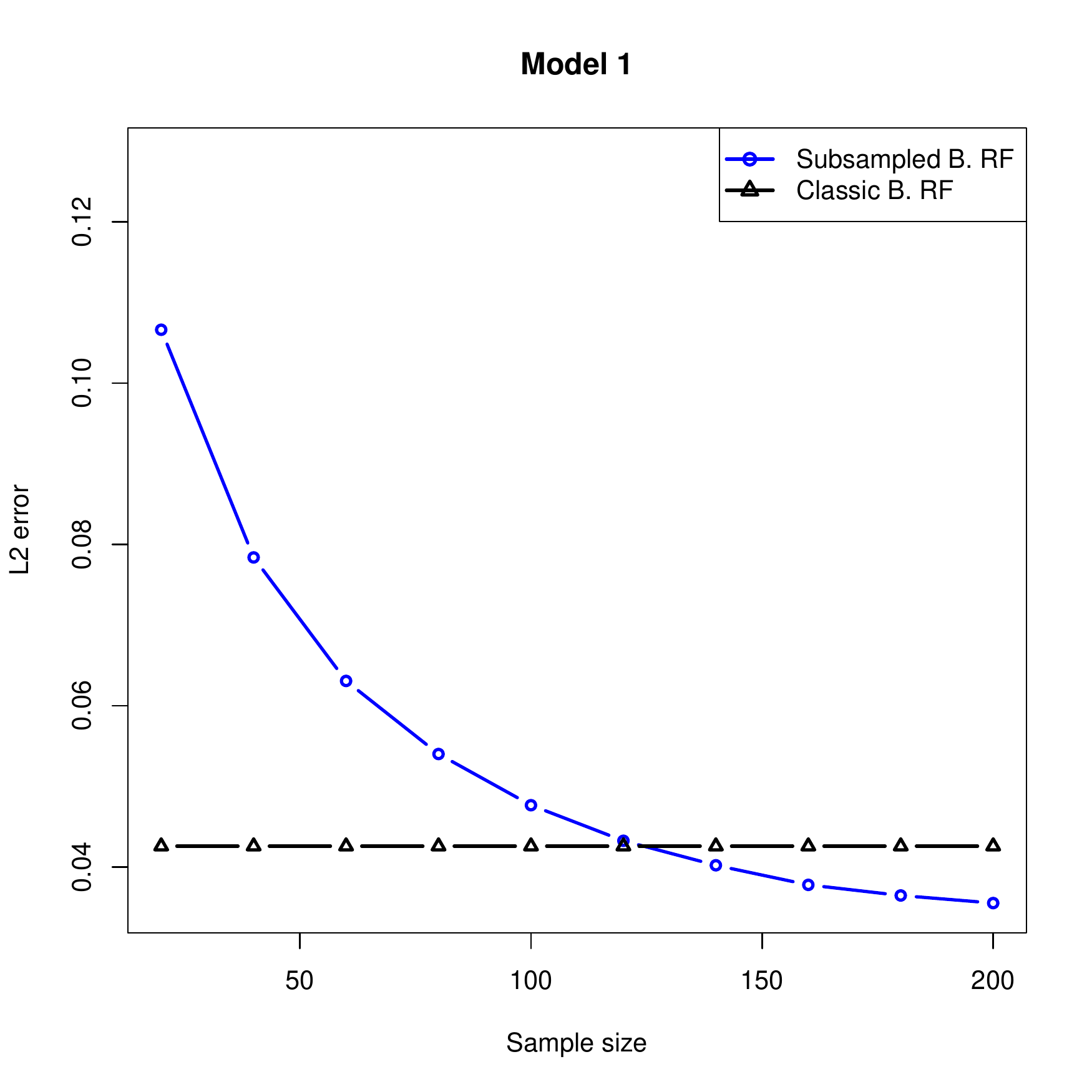}
\\
\includegraphics[scale=0.3]{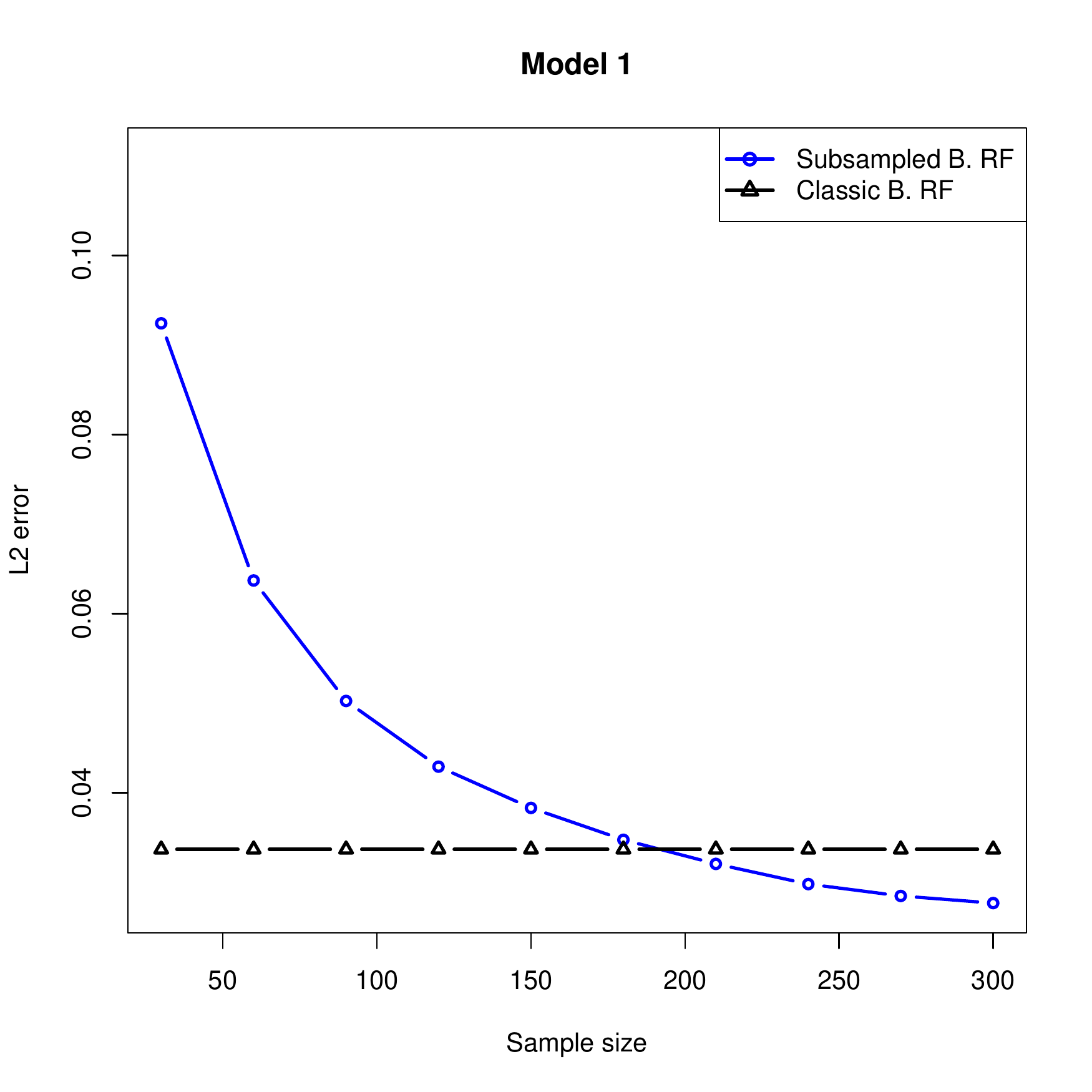}
& \includegraphics[scale=0.3]{choix_SSrate_model1_P_vs_B_RF-mod1_ntree500_nb-iter50nombrepoint400_mtrydefault.pdf} \\
\includegraphics[scale=0.3]{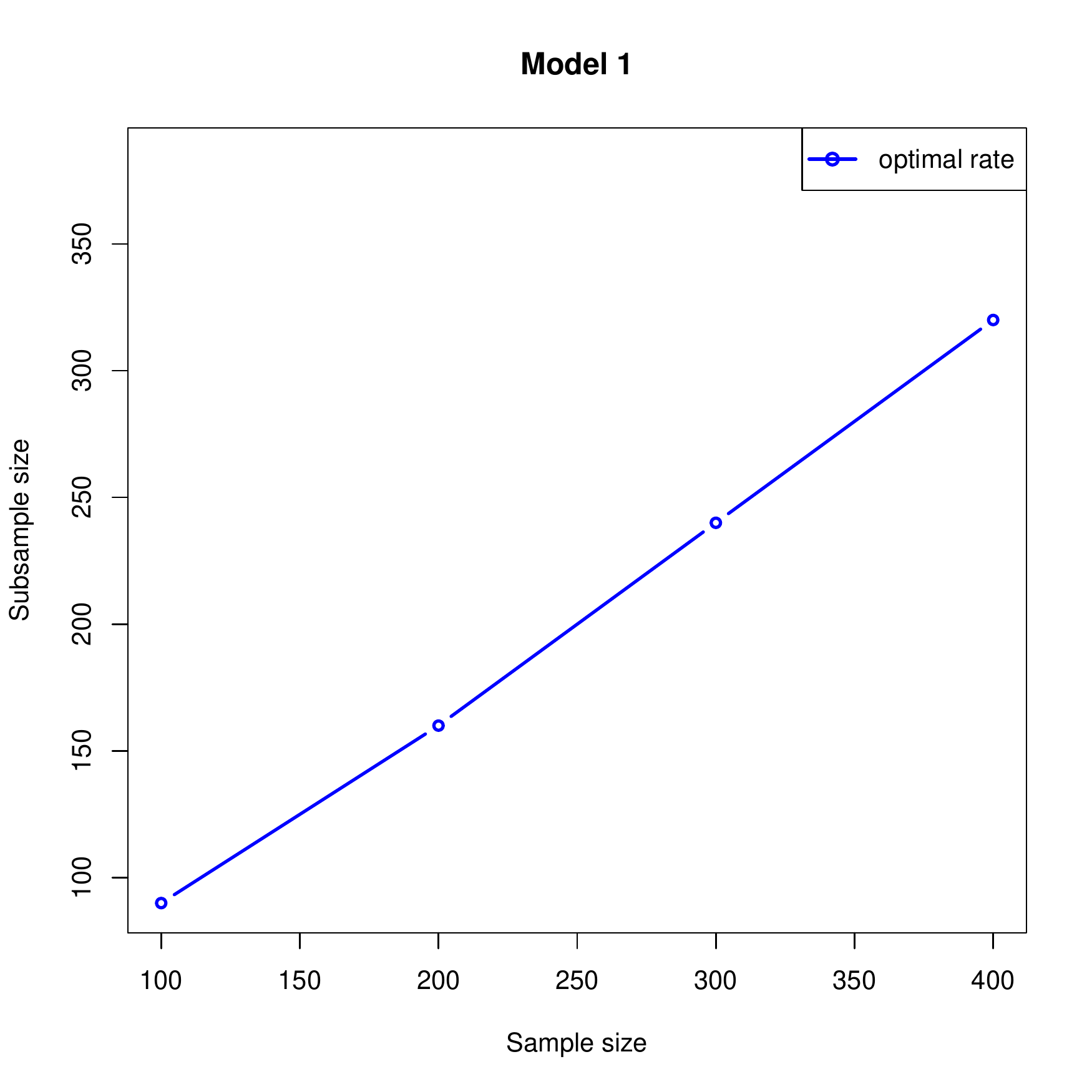}
\end{tabular}
\caption{Tuning of subsampling rate (model 1).}
\label{fig6}
\end{figure}

\begin{figure}[h!!]
\begin{tabular}{cc}
\includegraphics[scale=0.3]{choix_SSrate_model1_ss_optimal_ntree500_nb-iter50_mtrydefault.pdf}
& \includegraphics[scale=0.3]{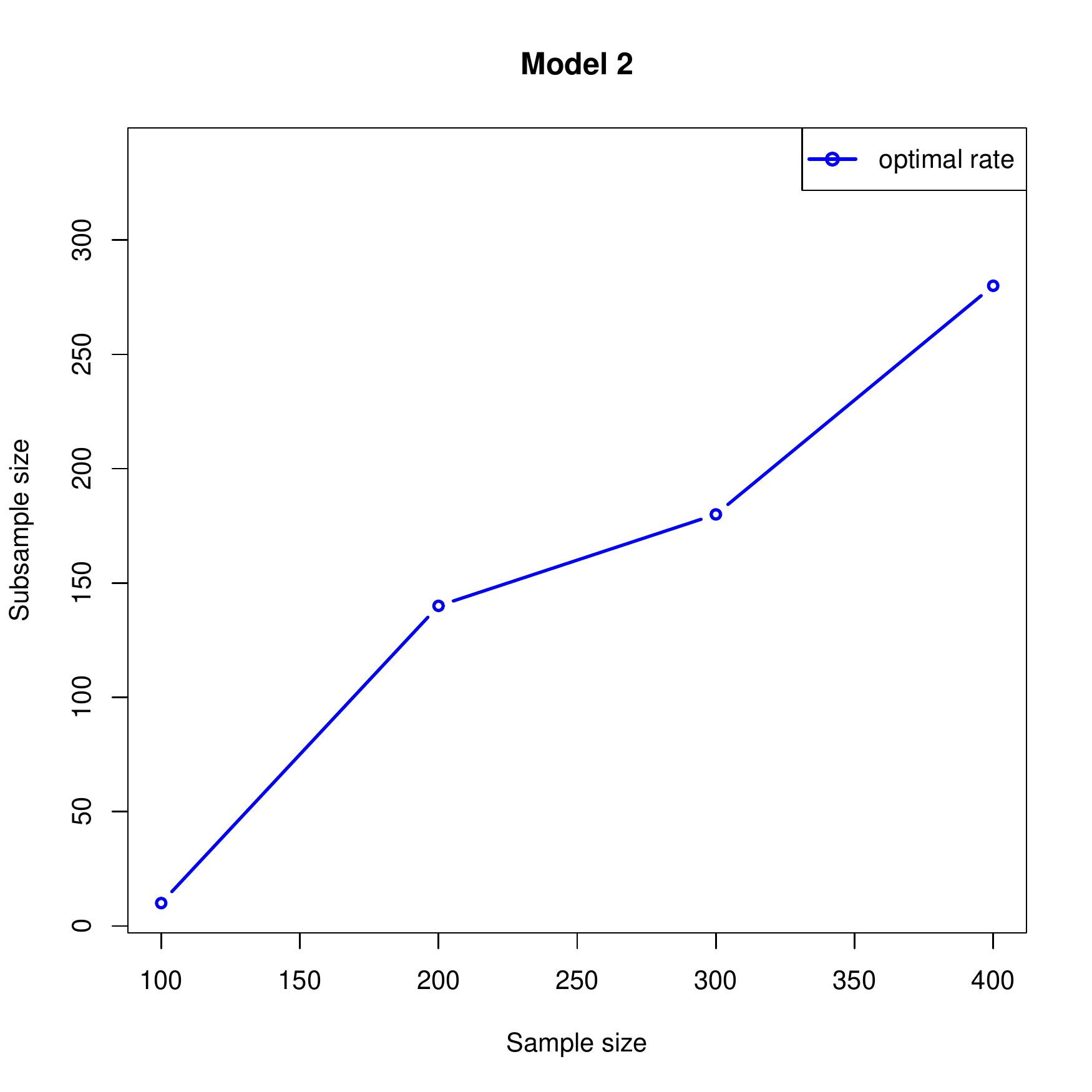}
\\
\includegraphics[scale=0.3]{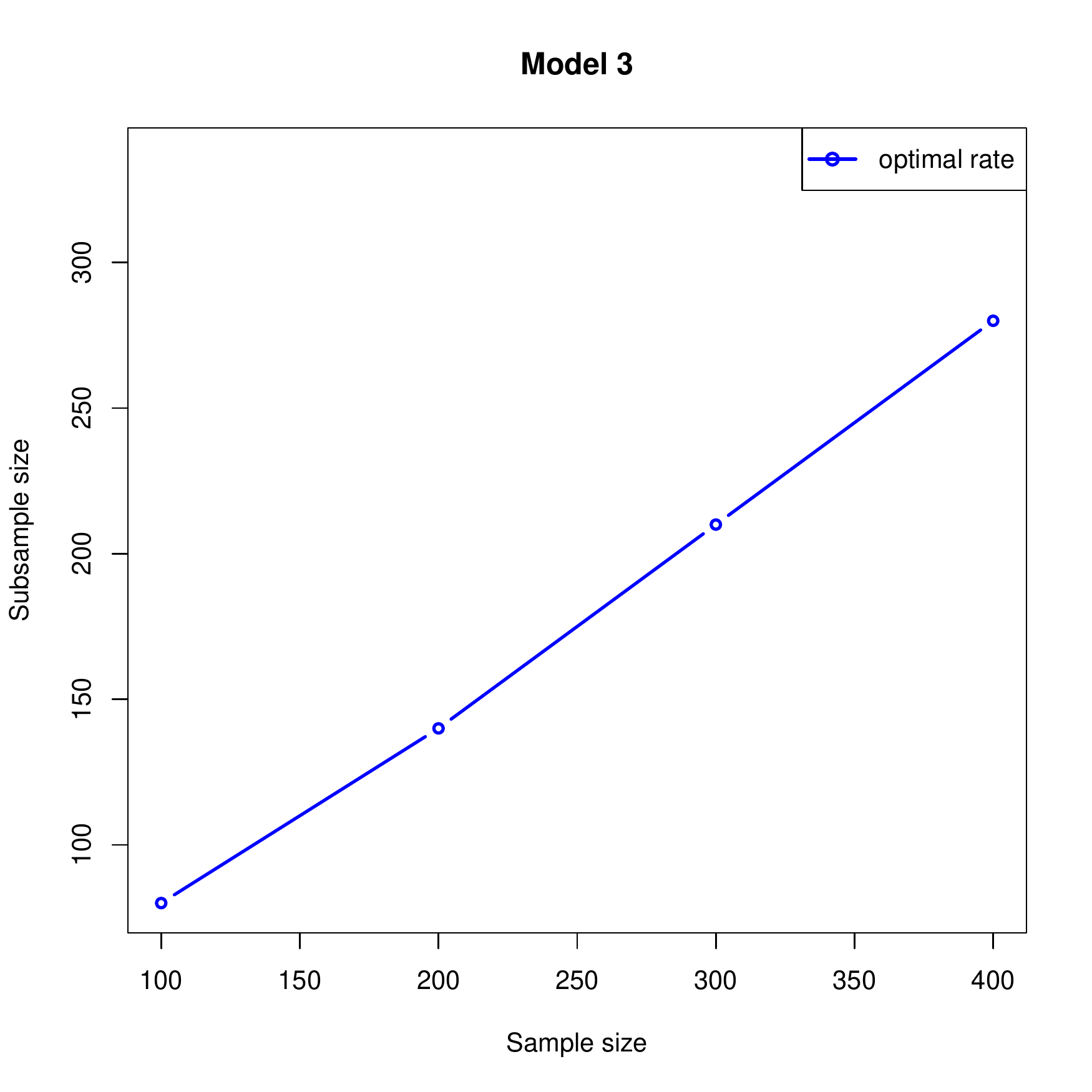}
& \includegraphics[scale=0.3]{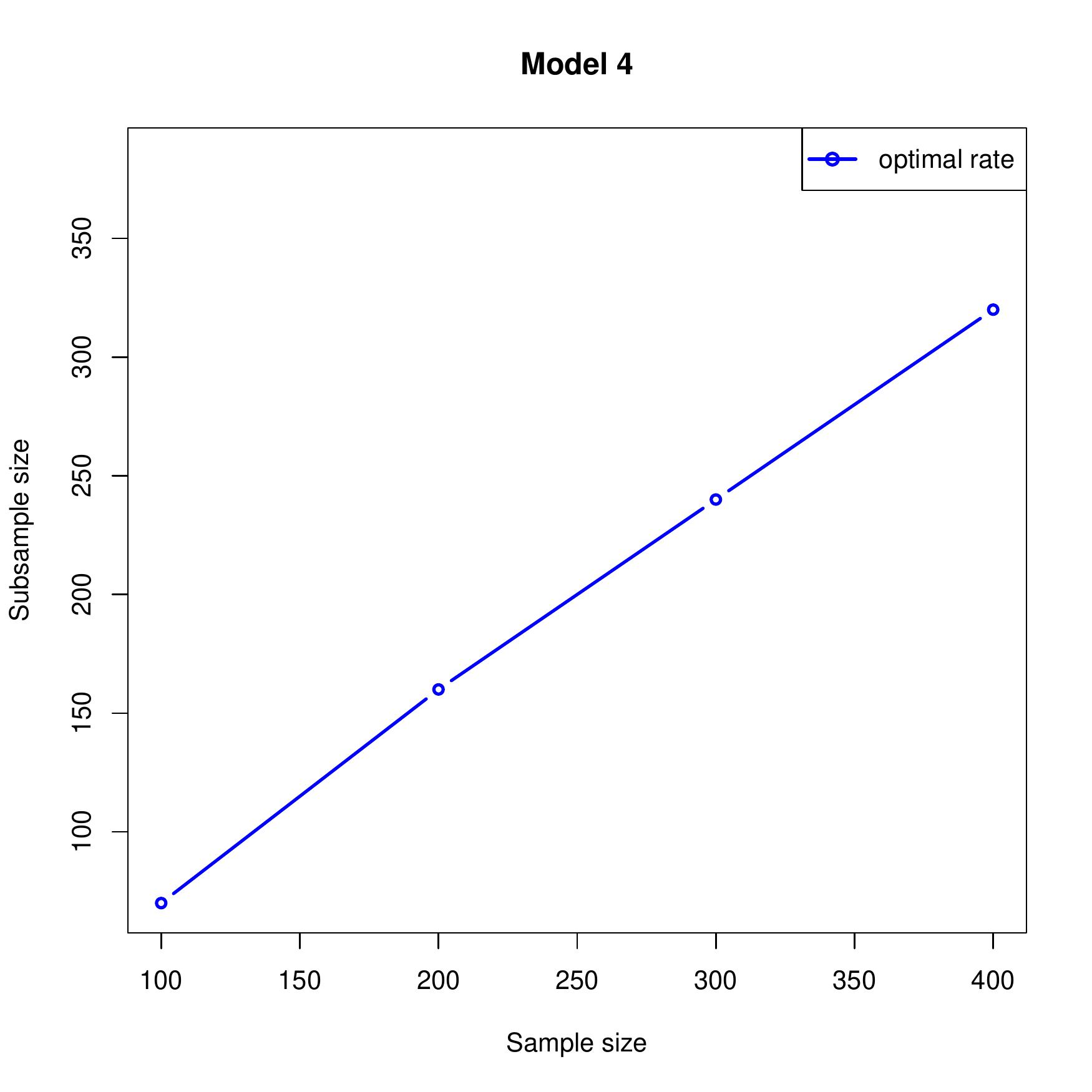}
\\
\includegraphics[scale=0.3]{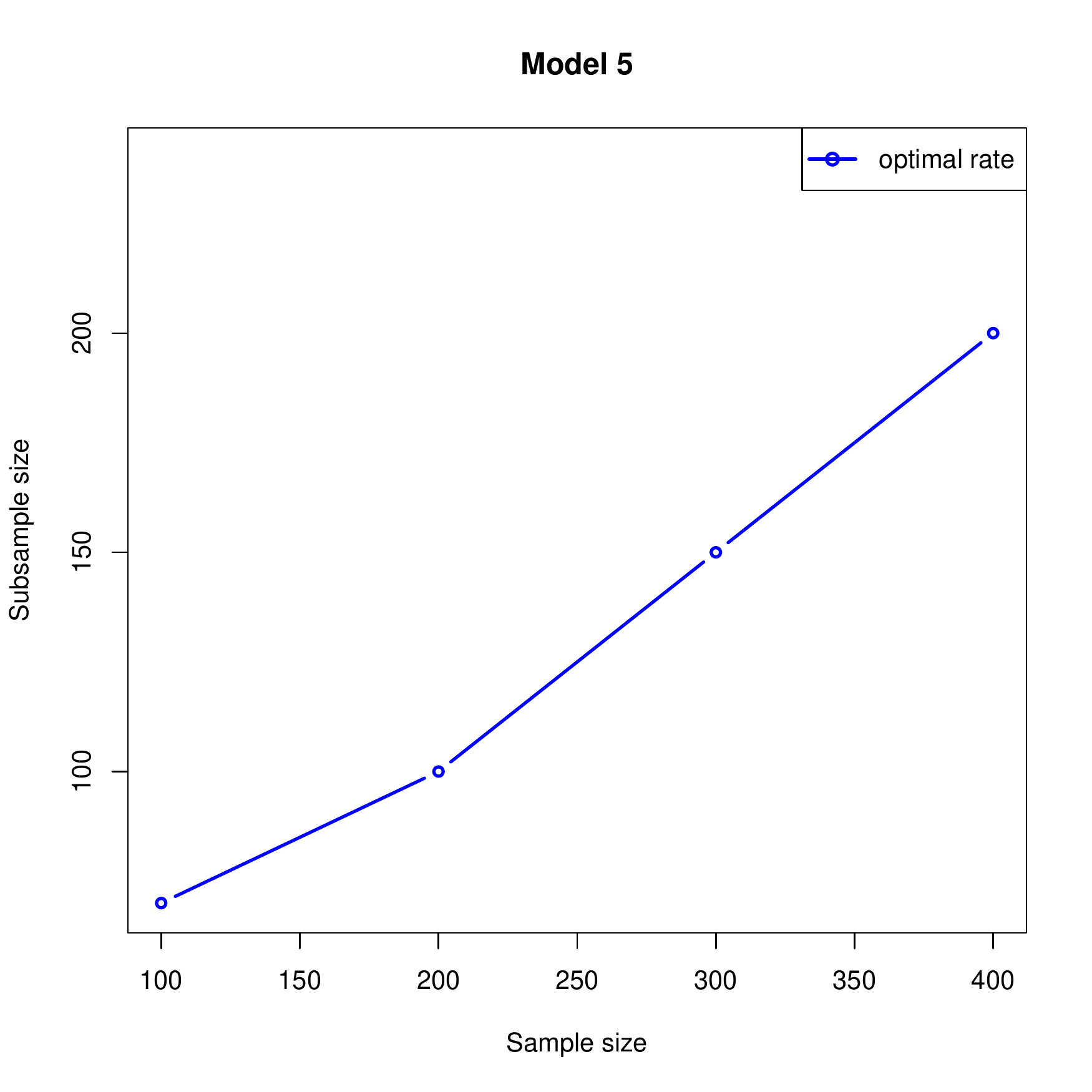}
& \includegraphics[scale=0.3]{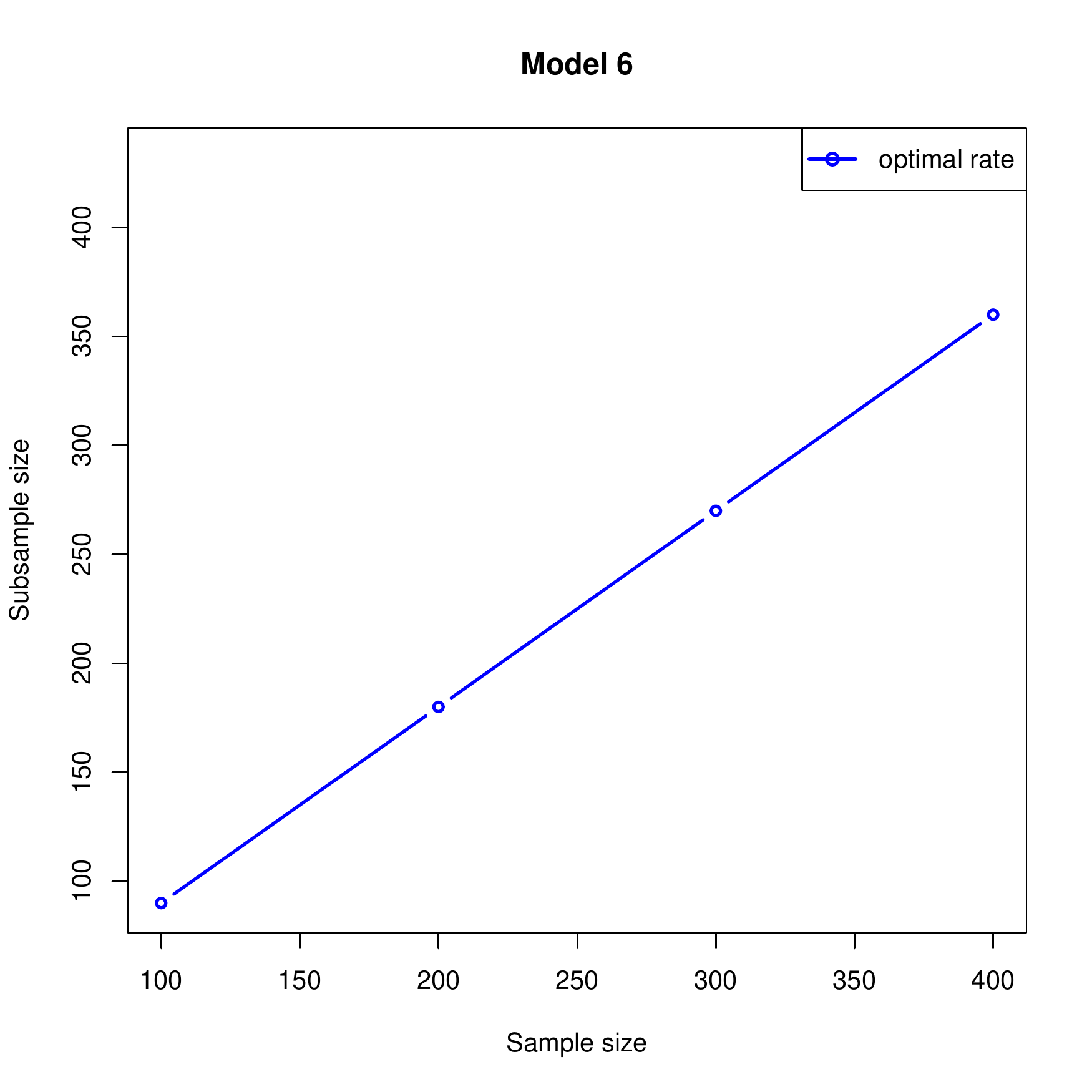}
\\
\includegraphics[scale=0.3]{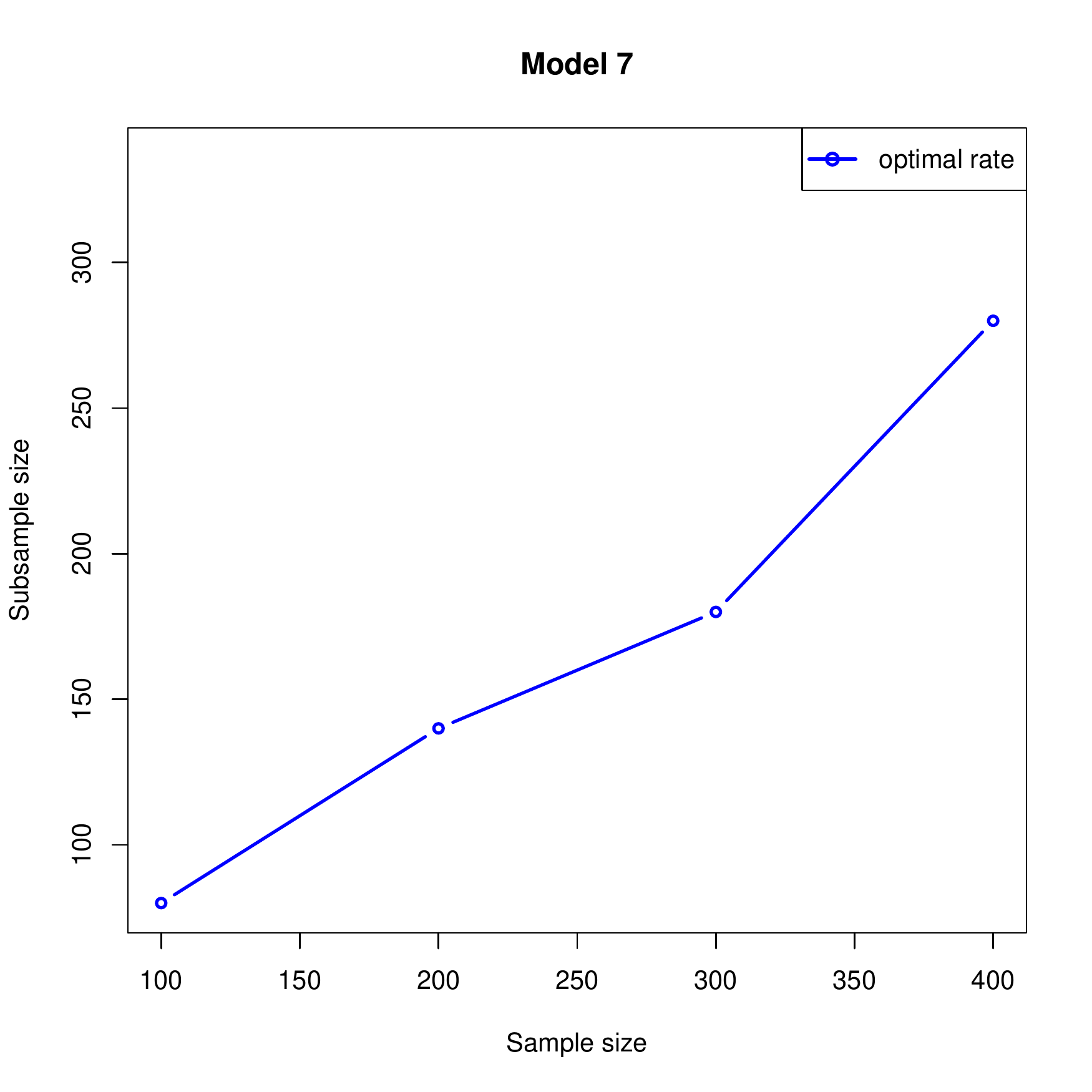}
& \includegraphics[scale=0.3]{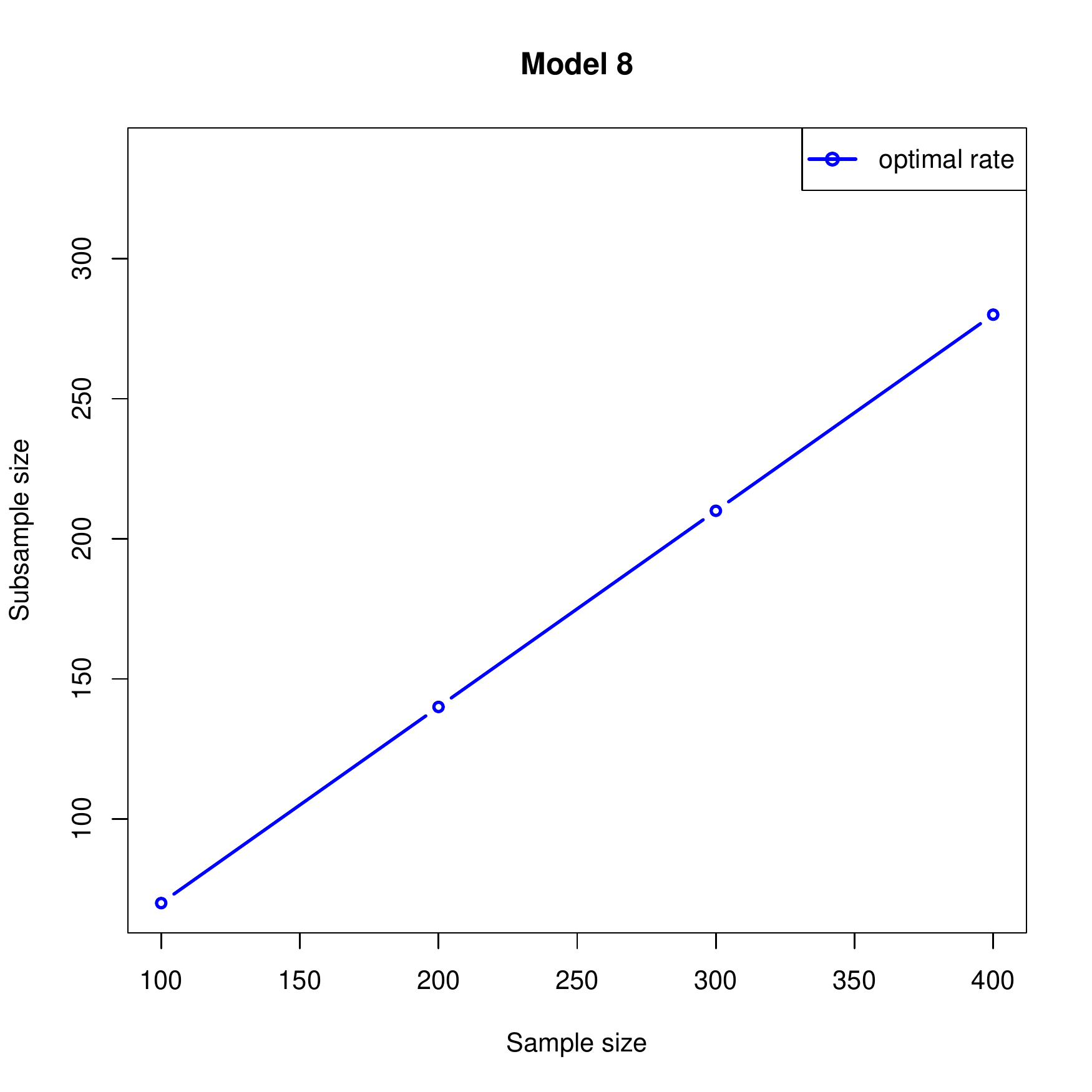}
\\
\end{tabular}
\caption{Optimal values of pruning parameter.}
\label{fig7}
\end{figure}

Then we present, in Figure \ref{fig8}, the $\mathbb{L}^2$ errors of subsampled Breiman's forests for different subsampling sizes ($0.4n$, $0.5n$, $0.63n$ and $0.9n$), when the sample size is fixed, for {\bf Models 1-8}. We can notice that the forests with a subsampling size of $0.63n$ give similar performances than the standard Breiman's forests. This is not surprising. Indeed, a bootstrap sample contains around $63\%$ of distinct observations. Moreover the high subsampling sizes, around $0.9n$, lead to small $\mathbb{L}^2$ errors. It may arise from the probably high signal/noise rate. In each model, when the noise is increasing, the results, exemplified in Figure \ref{fig9}, are less obvious. That is why we can lawfully use the subsampling size as an optimization parameter for the Breiman's forest performance.

\begin{figure}[h!!]
\begin{tabular}{cc}
\includegraphics[scale=0.3]{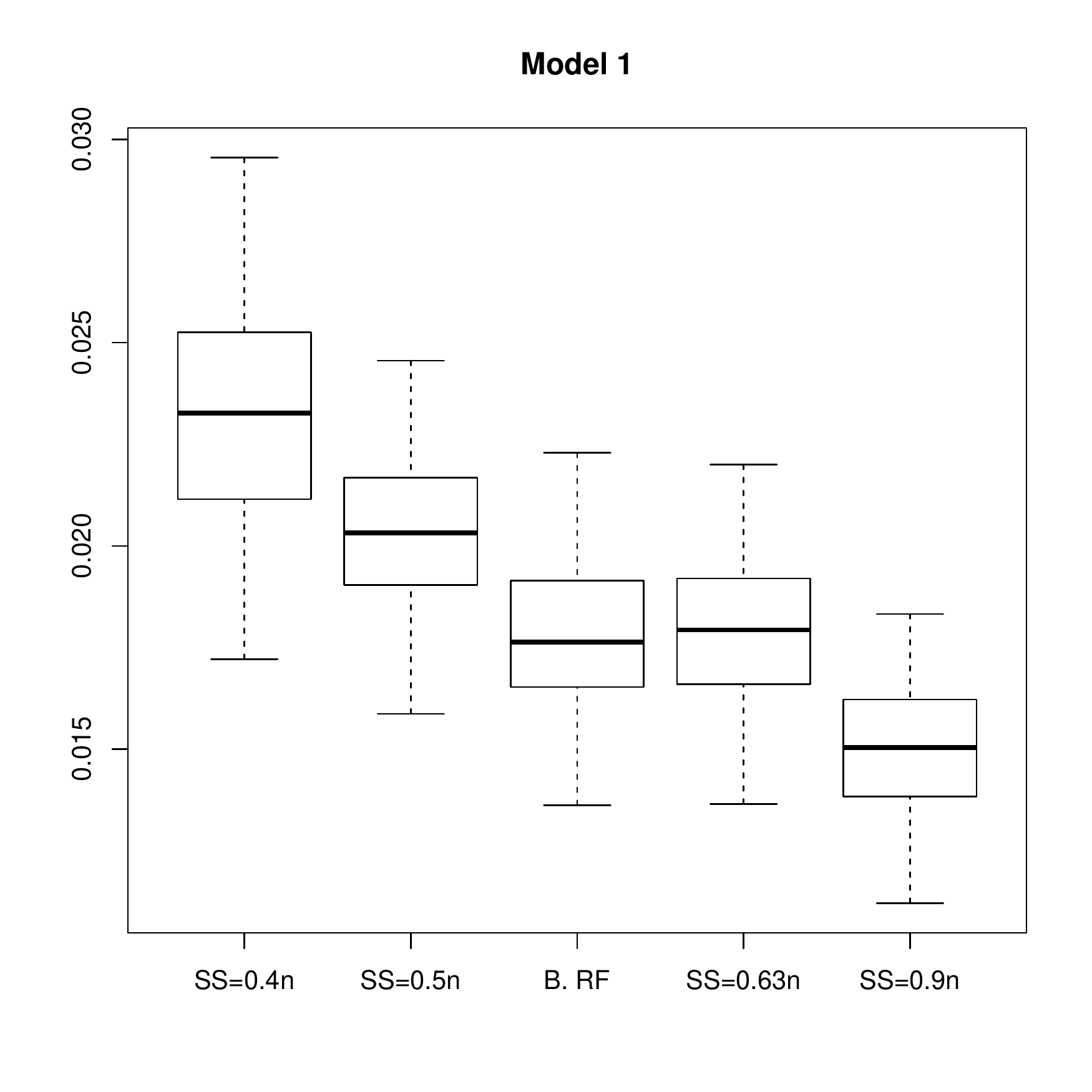}
& \includegraphics[scale=0.3]{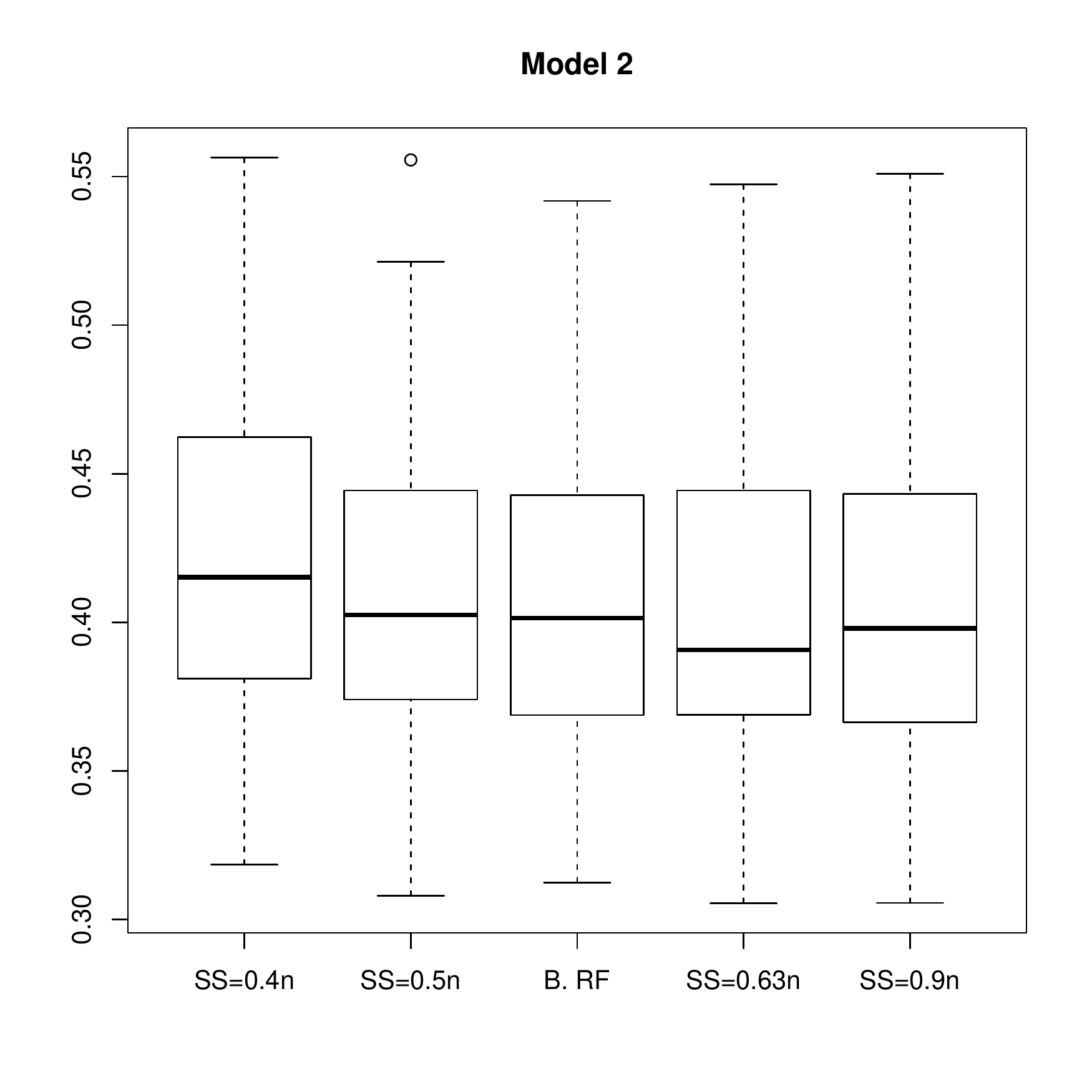}
\\
\includegraphics[scale=0.3]{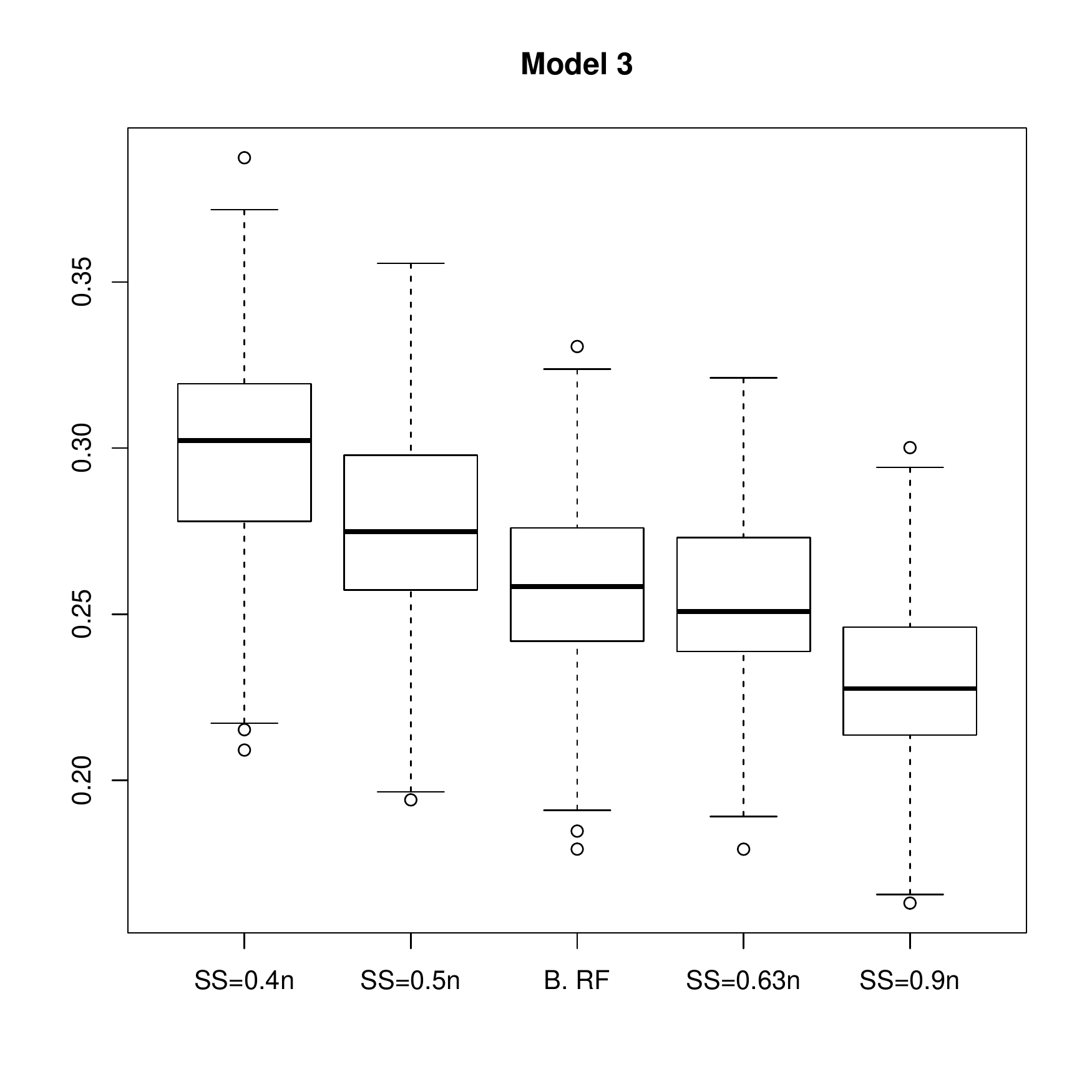}
& \includegraphics[scale=0.3]{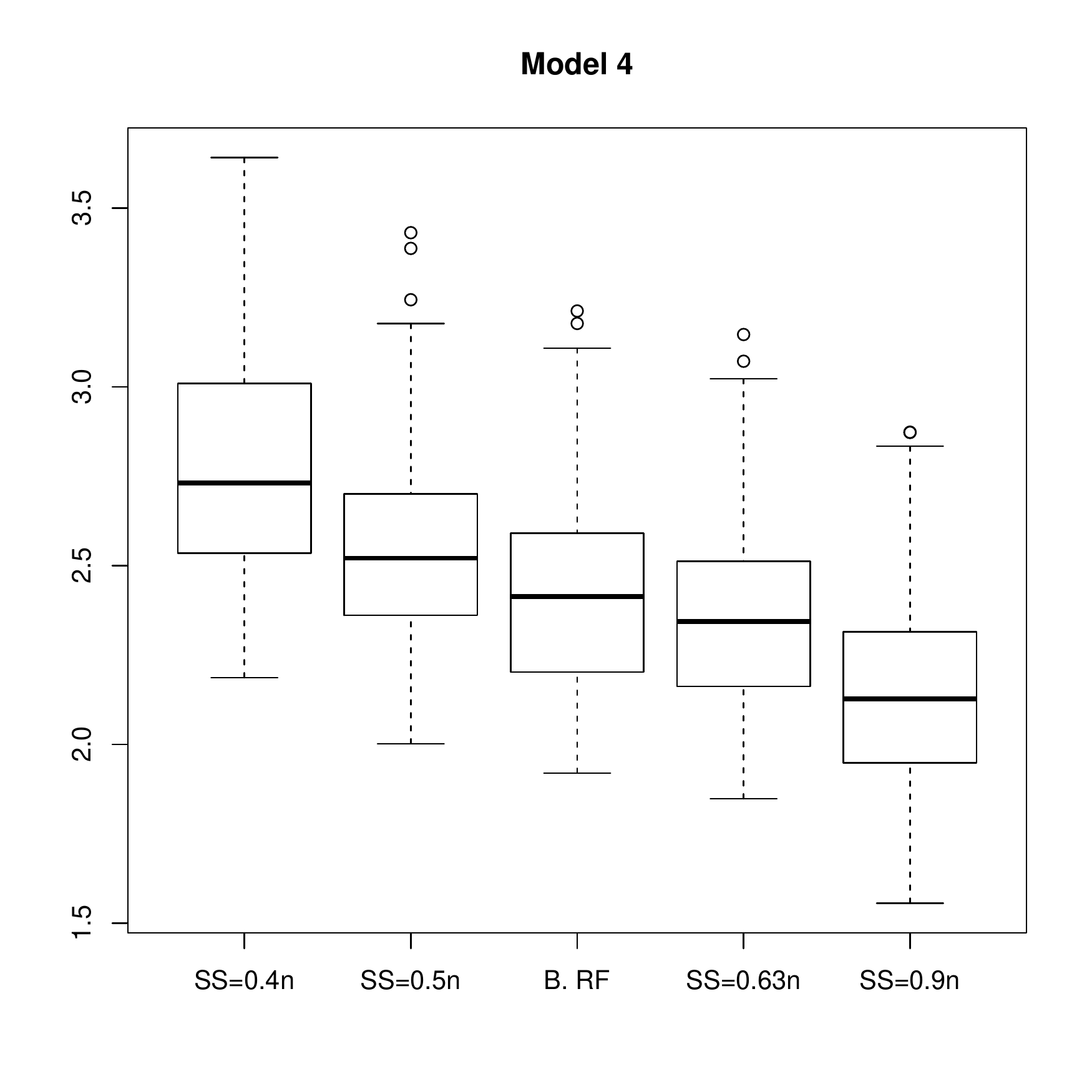}
\\
\includegraphics[scale=0.3]{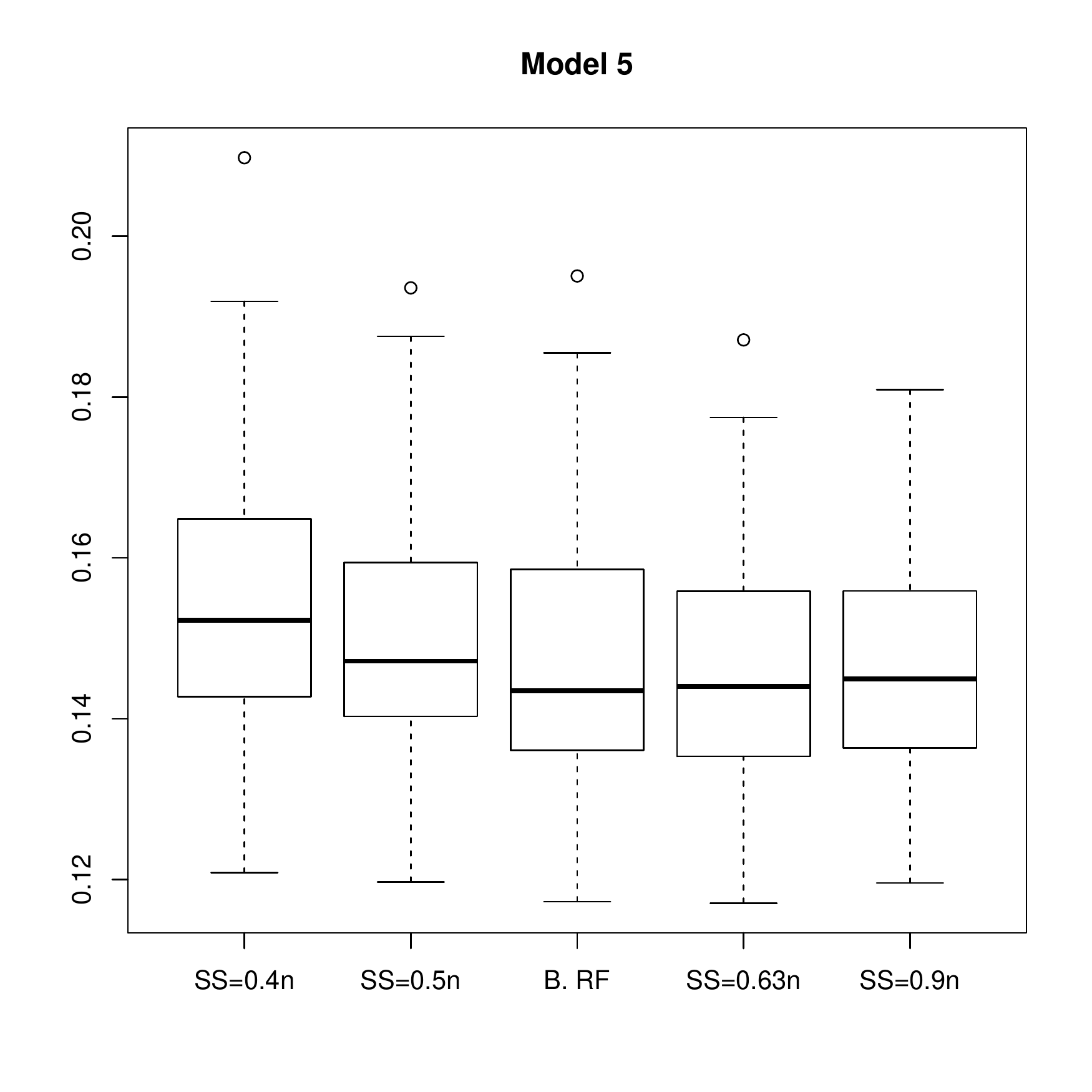}
& \includegraphics[scale=0.3]{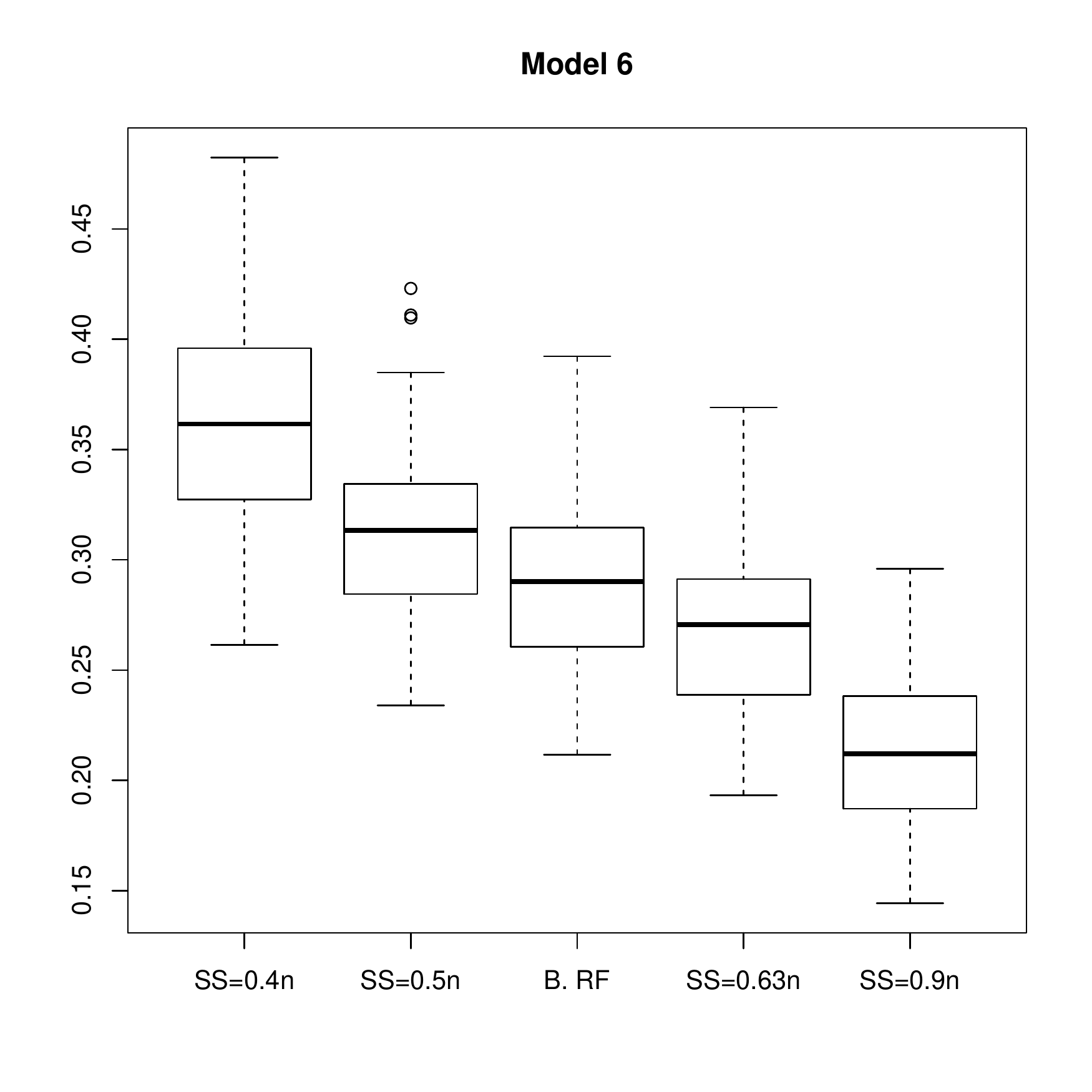}
\\
\includegraphics[scale=0.3]{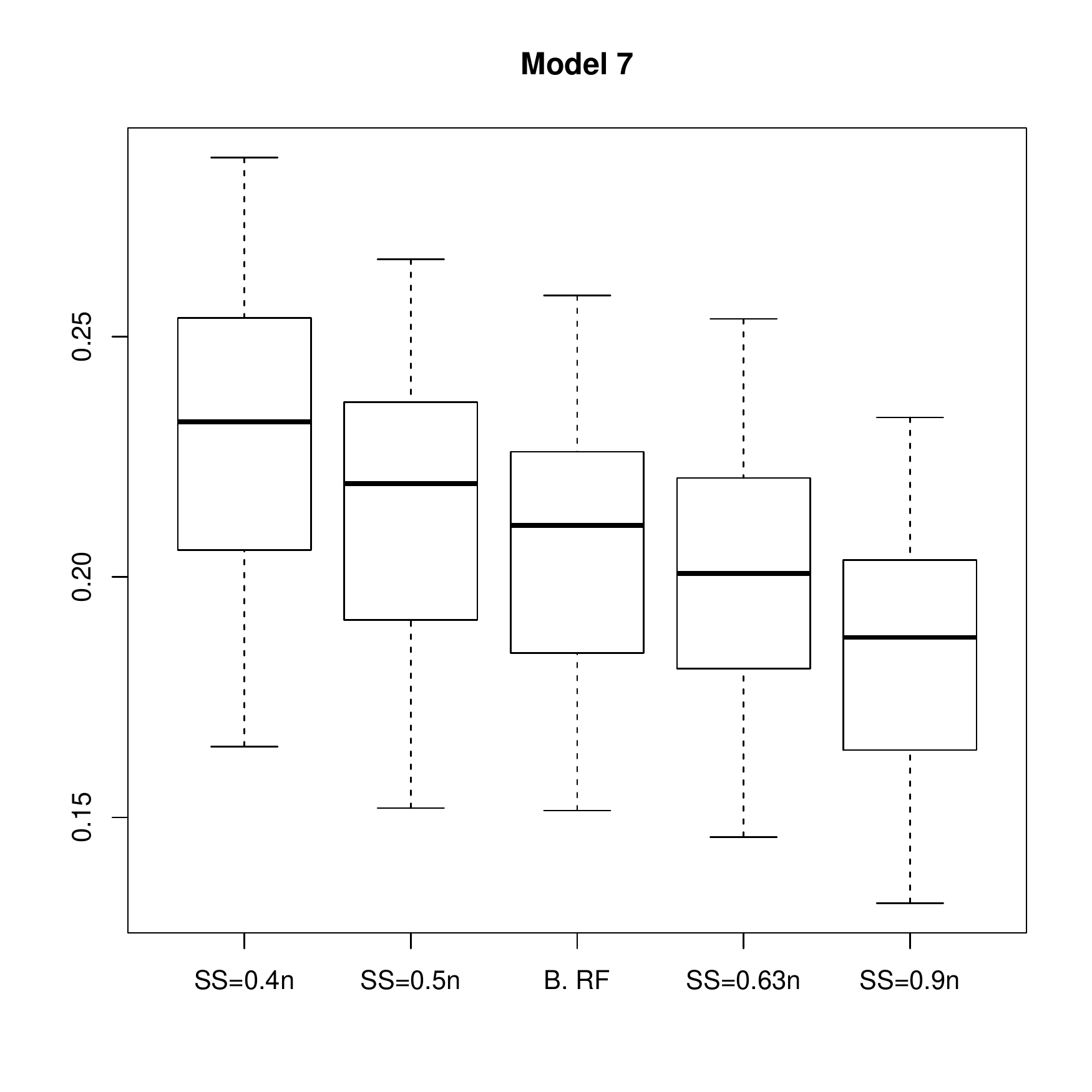}
& \includegraphics[scale=0.3]{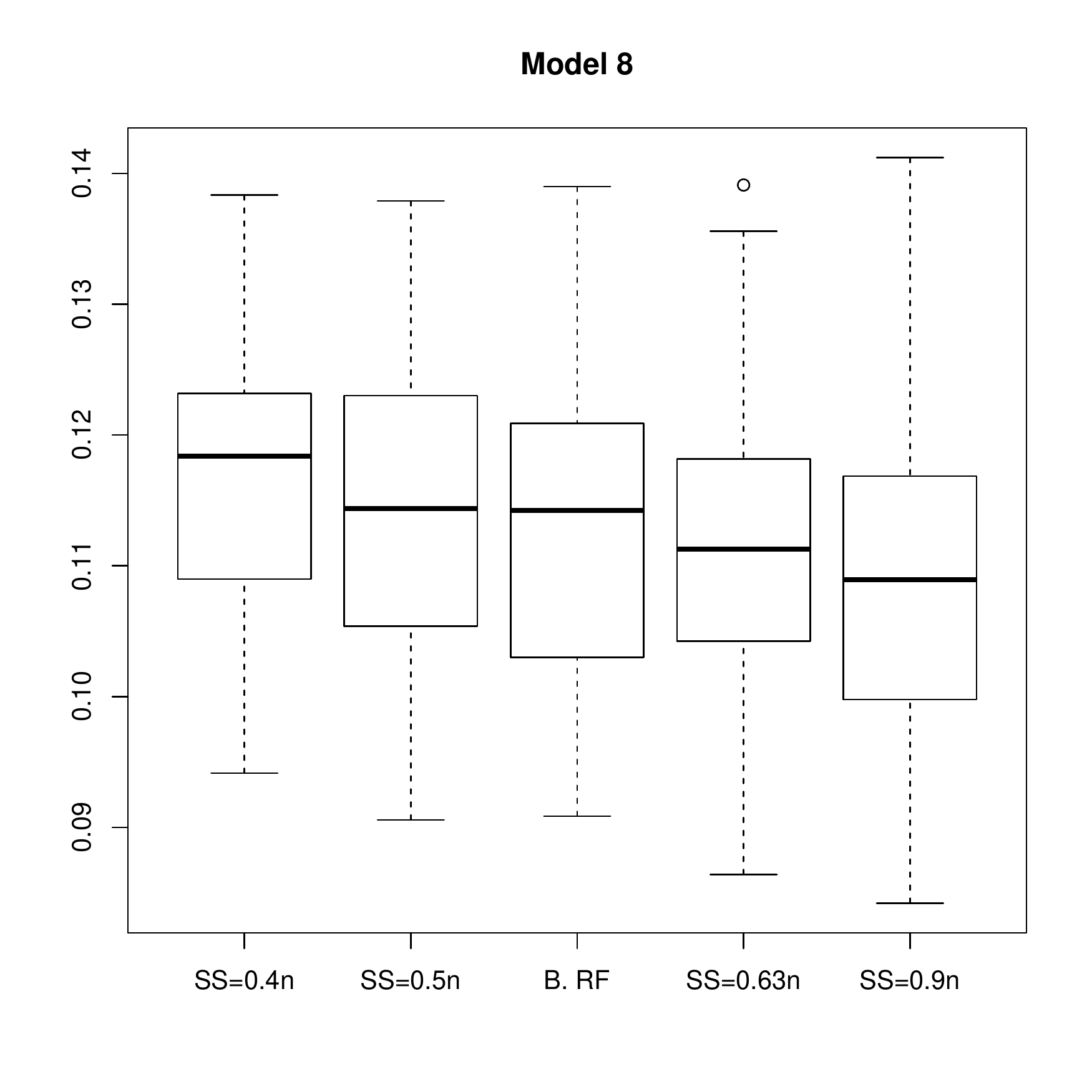}
\\
\end{tabular}
\caption{Standard Breiman forests versus several pruned Breiman forests.}
\label{fig8}
\end{figure}

\begin{figure}[h!!]
\begin{tabular}{cc}
\includegraphics[scale=0.3]{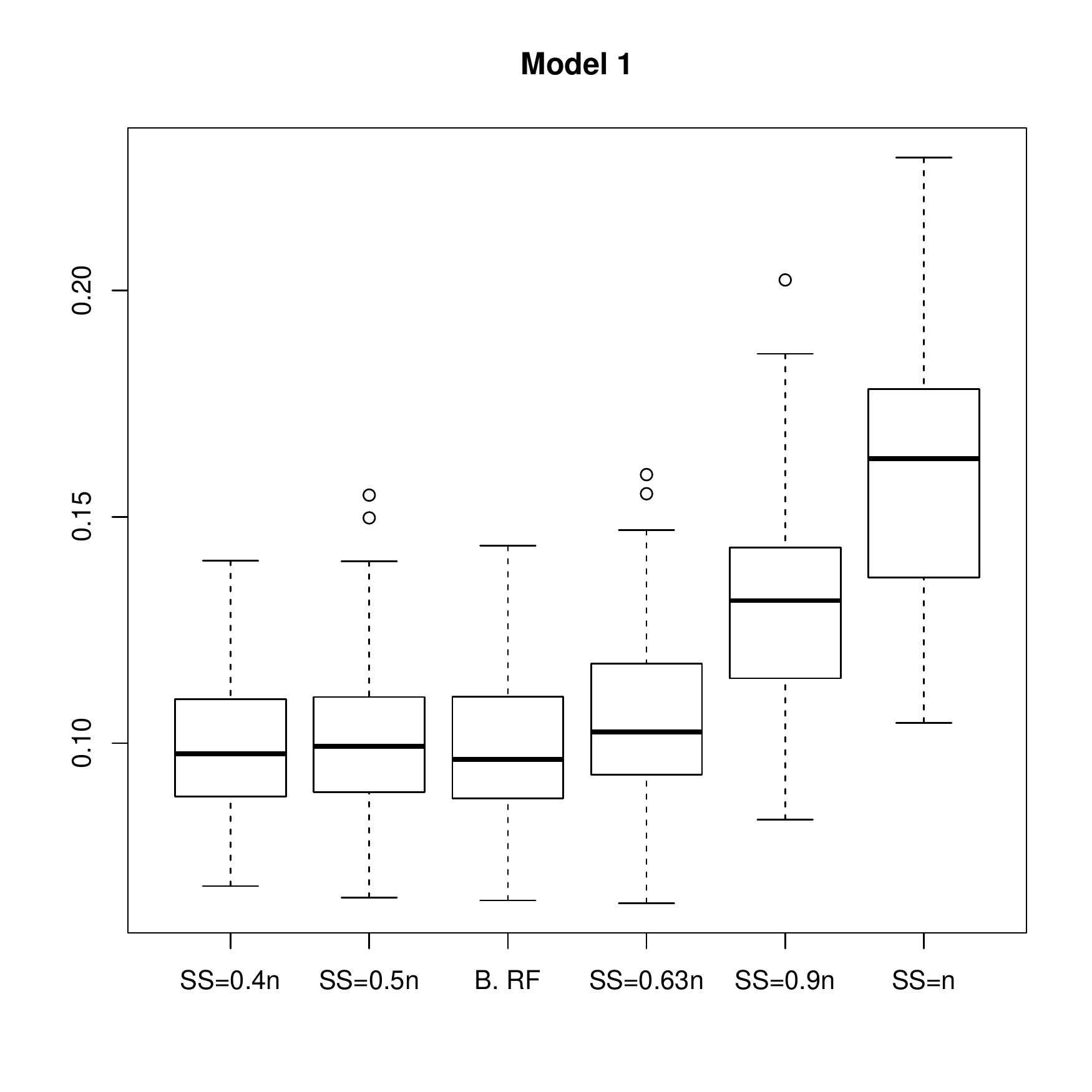}
& \includegraphics[scale=0.3]{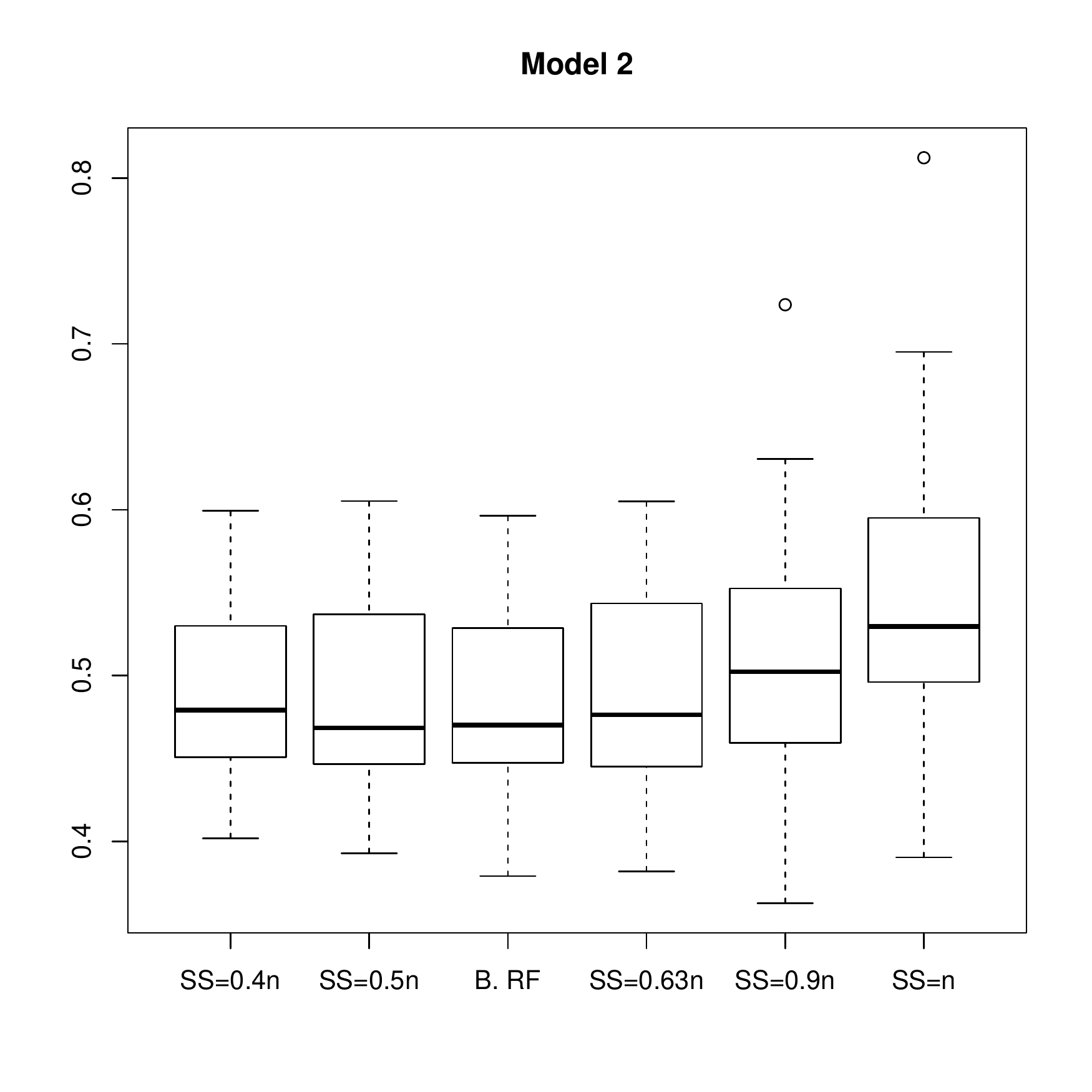}
\\
\includegraphics[scale=0.3]{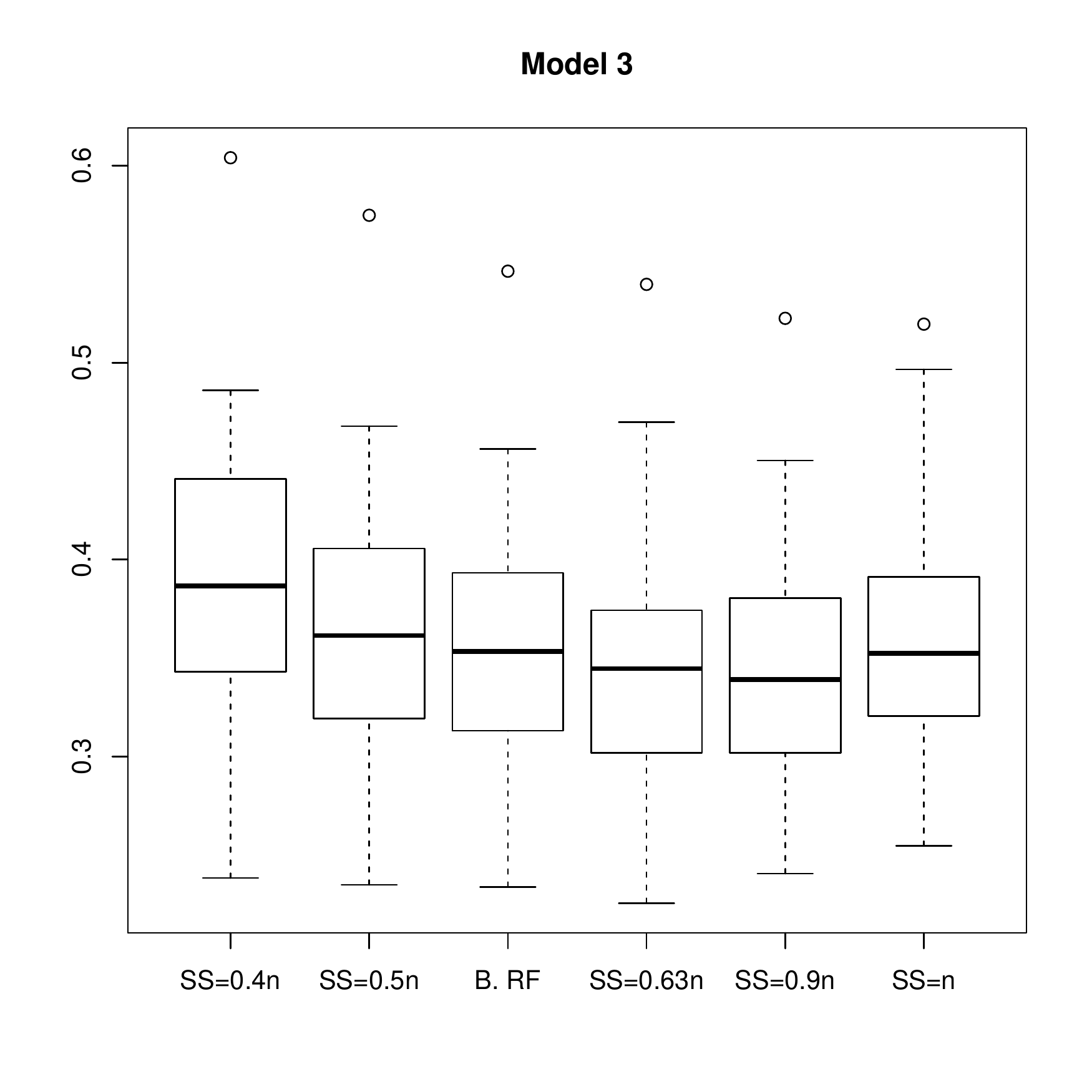}
& \includegraphics[scale=0.3]{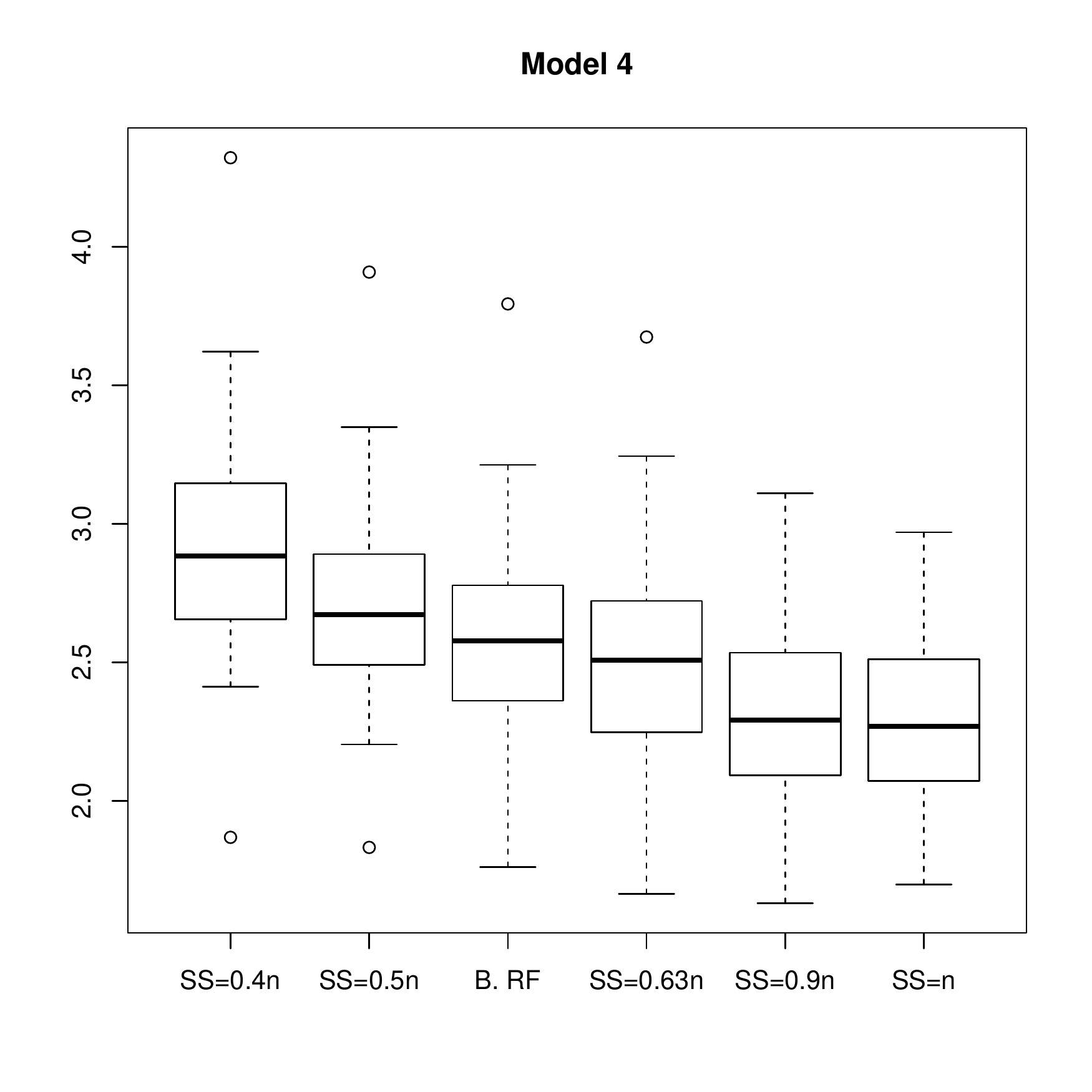}
\\
\includegraphics[scale=0.3]{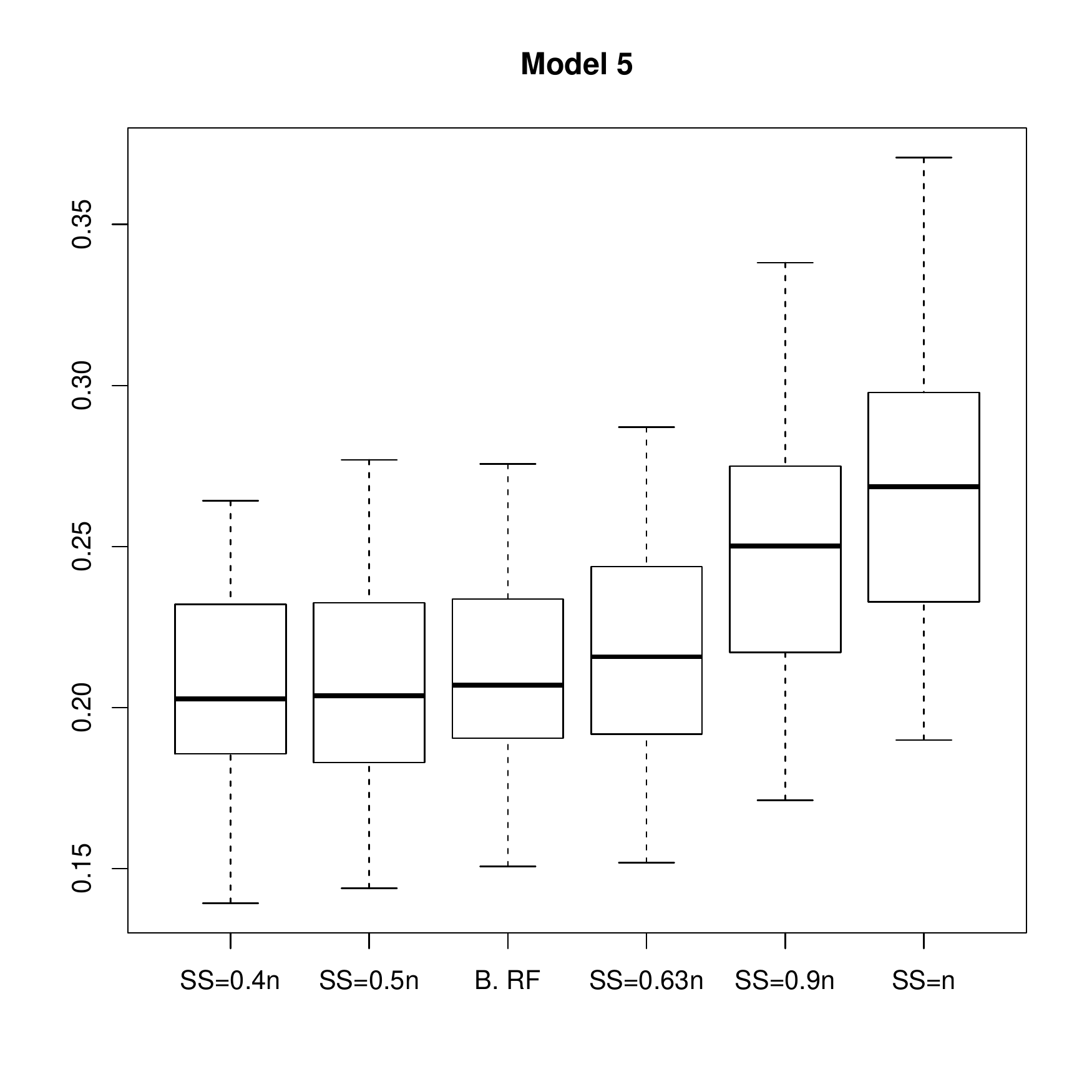}
& \includegraphics[scale=0.3]{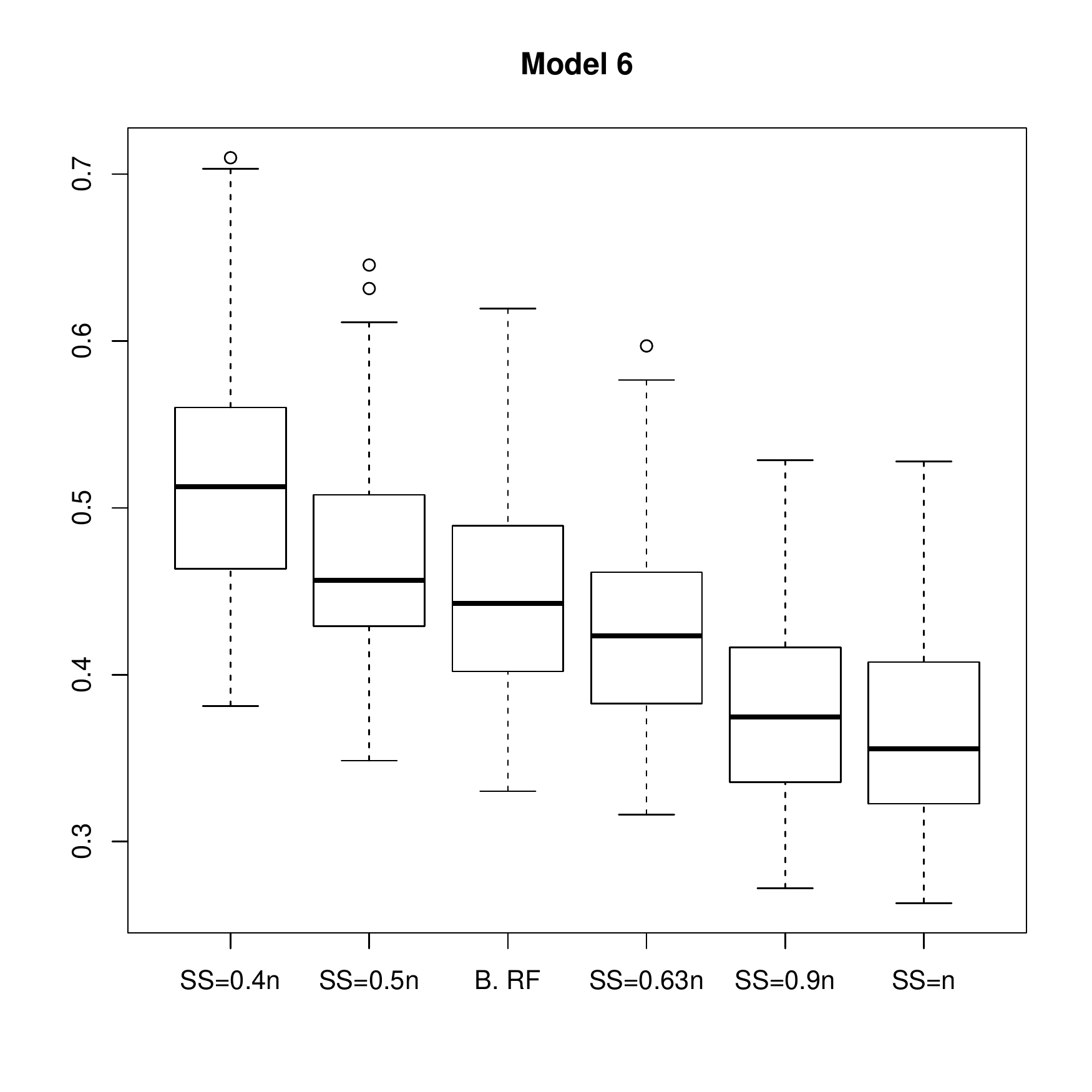}
\\
\includegraphics[scale=0.3]{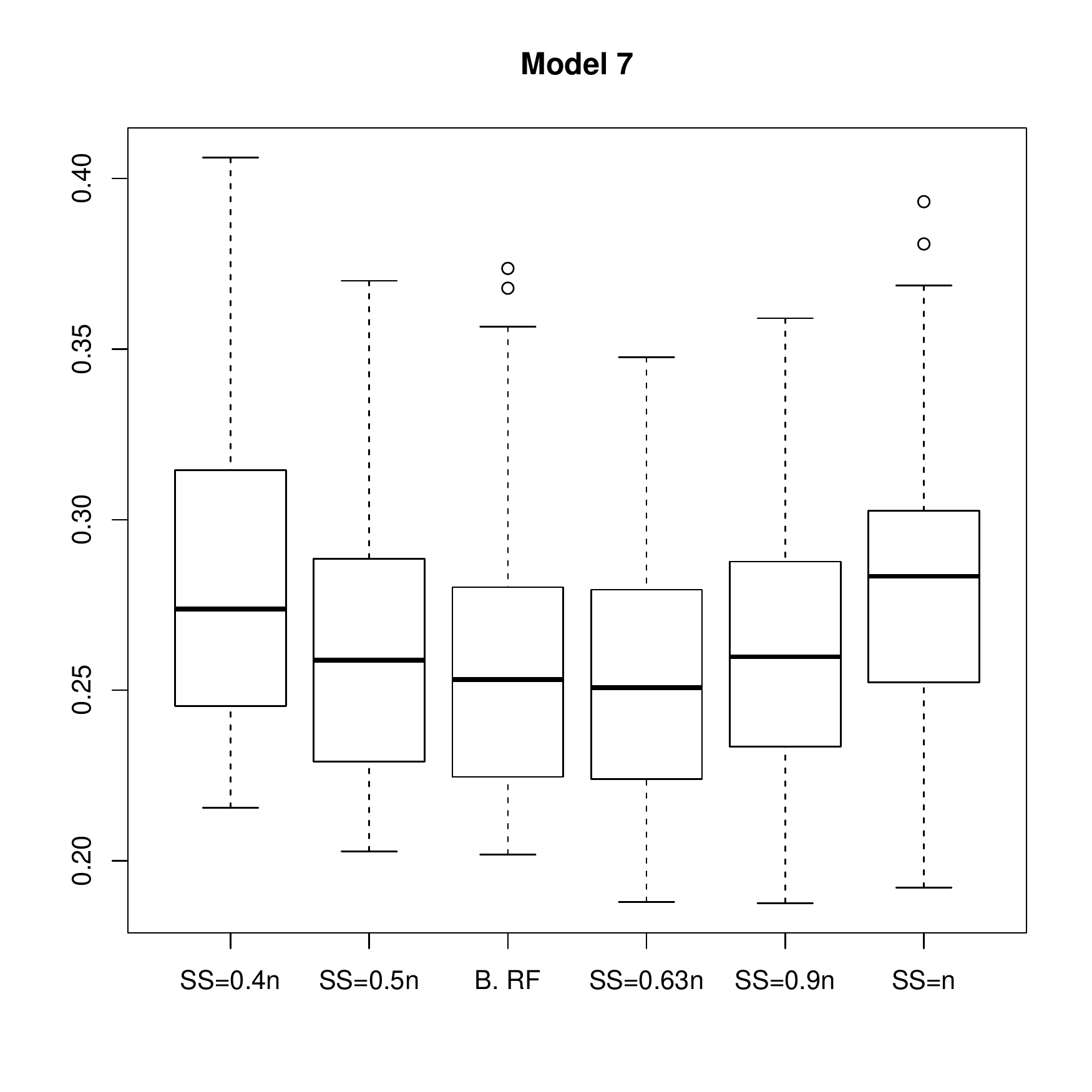}
& \includegraphics[scale=0.3]{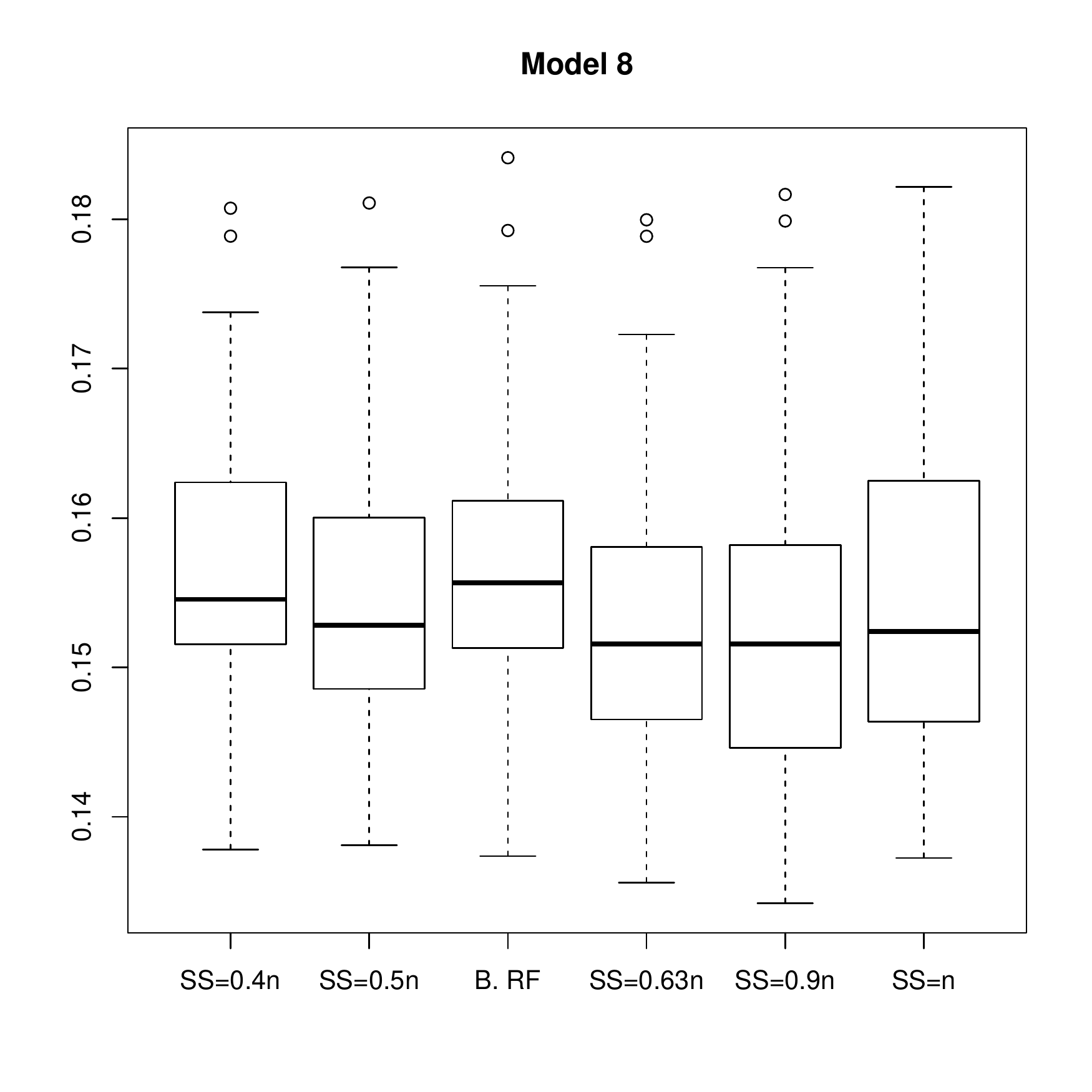}
\\
\end{tabular}
\caption{Standard Breiman forests versus several pruned Breiman forests (noisy models).}
\label{fig9}
\end{figure}

\section{Discussion}

In this paper, we studied the role of subsampling step and tree depth in Breiman's forest procedure. By analysing a simple version of random forests, we show that the performance of fully grown subsampled forests  and that of pruned forests with no subsampling step are similar, provided a proper tuning of the parameter of interest (subsample size and tree depth respectively). 

The extended experiments have shown similar results: Breiman's forests can be outperformed by either subsampled or pruned Breiman's forests by properly tuning parameters. Noteworthy, tuning tree depth can be done at almost no additional cost while running Breiman's forests (due to the intrinsic recursive nature of forests). However if one is interested in a faster procedure, subsampled Breiman's forests are to be preferred to pruned forests. 

As a by-product, our analysis also shows that there is no particular interest in bootstrapping data instead of subsampling: in our experiments, bootstrap is comparable (or worse) than subsampling. 
This sheds some light on several previous theoretical analysis where the bootstrap step was replaced by subsampling, which is more amenable to analyse. Similarly, proving theoretical results on fully grown Breiman's forests turned out to be extremely difficult. Our analysis shows that there is no theoretical background for considering Breiman's forests with default parameters values  instead of pruned or subsampled Breiman's forests, which reveal themselves to be easier to examine.

\clearpage

\section{Proofs}\label{S-preuves}

\begin{proof}[Proof of Theorem \ref{th_general_bound}]

Let us start by recalling that the random forest estimate $m_n$ can be written as a local averaging estimate
\begin{align*}
m_{\infty,n}(\bx) = \sum_{i=1}^n W_{ni}(\bx) Y_i,
\end{align*}
where 
\begin{align*}
W_{ni}(\bx) = \frac{\mathds{1}_{\bX_i \overset{\Theta}{\leftrightarrow} \bx}}{N_n(\bx, \T)}.
\end{align*}
The quantity $\mathds{1}_{\bX_i \overset{\Theta}{\leftrightarrow} \bx}$ indicates whether the observation $\bX_i$ is in the cell of the tree which contains $\bx$ or not, and $N_n(\bx, \T)$ denotes the number of data points falling in the same cell as $\bx$. The $\mathds{L}^2$-error of the forest estimate takes then the form
\begin{align*}
\E \big[ m_{\infty,n}(\bx) - m(\bx) \big]^2 & \leq 2 \E \bigg[ \sum_{i=1}^n W_{ni}(\bx) (Y_i - m(\bX_i)) \bigg]^2 \\
& \quad + 2 \E \bigg[ \sum_{i=1}^n W_{ni}(\bx) (m(\bX_i) - m(\bx)) \bigg]^2\\
 & = 2I_n + 2 J_n.
\end{align*}
We can identify the term $I_n$ as the estimation error and $J_n$ as the approximation error, and then work on each term $I_n$ and $J_n$ separately. 

\paragraph{Approximation error.}
Let $A_n(\bx, \Theta)$ be the cell containing $\bx$ in the tree built with the random parameter $\Theta$. Regarding $J_n$, by the Cauchy Schwartz inequality, 
\begin{align*}
J_n & \leq \E \bigg[ \sum_{i=1}^n \sqrt{W_{ni}(\bx)} \sqrt{W_{ni}(\bx)} |m(\bX_i) - m(\bx)|  \bigg]^2\\
& \leq \E \bigg[ \sum_{i=1}^n W_{ni}(\bx) (m(\bX_i) - m(\bx))^2  \bigg]\\
& \leq \E \left[ \sum_{i=1}^n \frac{\mathds{1}_{\bX_i \overset{\Theta}{\leftrightarrow} \bx}}{N_n(\bx, \T)} 
\sup\limits_{\substack{\bx, \bz,\\ |\bx - \bz| \leq \textrm{diam}(A_n(\bx))}}  |m(\bx) - m(\bz)|^2  \right]\\
& \leq L^2 \E \left[ \frac{1}{N_n(\bx, \T)} \sum_{i=1}^n \mathds{1}_{\bX_i \overset{\Theta}{\leftrightarrow} \bx} \left( \textrm{diam}(A_n(\bx, \Theta)) \right)^2  \right]\\
& \leq L^2 \E \bigg[ \left( \textrm{diam}(A_n(\bx, \Theta)) \right)^2  \bigg],
\end{align*}
where the fourth inequality is due to the $L$-Lipschitz continuity of $m$. Let $V_{\ell}(\bx, \Theta)$ be the length of the cell containing $\bx$ along the $\ell$-th side. Then, 
\begin{align*}
J_n & \leq L^2 \sum_{l=1}^d \E \bigg[ V_l(\bx, \T)^2  \bigg].
\end{align*}
According to Lemma \ref{Lemme_cell_length_beta} specified further, we have
\[
\E \bigg[ V_l(\bx, \T)^2 \bigg] \leq C \bigg(1- \frac{3}{4d} \bigg)^k,
\]
with $C= \exp (12/(4d-3))$. Thus, for all $k$, we have 
\begin{align*}
J_n \leq d L^2C \bigg(1- \frac{3}{4d} \bigg)^k.
\end{align*}

\paragraph{Estimation error.} Let us now focusing on the term $I_n$, we have
\begin{align*}
I_n & = \E \bigg[ \sum_{i=1}^n W_{ni}(\bx) (Y_i - m(\bX_i)) \bigg]^2\\
& =  \sum_{i=1}^n \sum_{i=1}^n \E \bigg[ W_{ni}(\bx) W_{nj}(\bx) (Y_i - m(\bX_i)) (Y_j - m(\bX_j))\bigg]\\
& = \E \bigg[ \sum_{i=1}^n W_{ni}^2(\bx) (Y_i - m(\bX_i))^2 \bigg]\\
& \leq \sigma^2 \E \bigg[ \max_{1 \leq i \leq n } W_{ni}(\bx) \bigg],
\end{align*}
since,  by {\bf(H)}, the variance of $\varepsilon_i$ is bounded above by $\sigma^2$.
Recalling that $a_n$ is the number of subsampled observations used to build the tree, we can note that 
\begin{align*}
\E \bigg[ \max_{1 \leq i \leq n } W_{ni}(\bx) \bigg]  & = \E \bigg[  \max_{1 \leq i \leq n } \E_{\Theta} \bigg[ \frac{\mathds{1}_{\bx \overset{\Theta}{\leftrightarrow} \bX_i}}{N_n(\bx, \Theta)} \bigg] \bigg]\\
& \leq  \frac{1}{\frac{a_n}{2^k} - 2}\E \bigg[ \max_{1 \leq i \leq n } \P_{\Theta} \bigg[ \bx \overset{\Theta}{\leftrightarrow} \bX_i \bigg] \bigg].
\end{align*}
Observe that in the subsampling step, there are exactly $\binom{a_n-1}{n-1}$ choices to pick a fixed observation $\bX_i$. Since $\bx$ and $\bX_i$ belong to the same cell only if $\bX_i$ is selected in the subsampling step, we see that
\begin{align*}
 \mathds{P}_{\Theta}\left[ \bx  \overset{\Theta}{\leftrightarrow} \bX_i \right] 
\leq & \frac{\binom{a_n-1}{n-1}}{\binom{a_n}{n}} = \frac{a_n}{n}.
\end{align*} 
So, 
\begin{align*}
I_n  \leq \sigma^2 \frac{1}{\frac{a_n}{2^k} - 2}  \frac{a_n}{n} 
\leq 2\sigma^2 \frac{2^k}{n},
\end{align*}
since $a_n/2^k \geq 4$.
Consequently, we obtain
\begin{align*}
\E \big[ m_{\infty,n}(\bx) - m(\bx) \big]^2 & \leq I_n + J_n  \leq  2\sigma^2 \frac{2^k}{n} + d L^2C \bigg(1- \frac{3}{4d} \bigg)^k.
\end{align*}

\end{proof}

We set up now Lemma \ref{Lemme_cell_length_beta} about the length of a cell that we used to bound the approximation error.

\begin{lemme}
\label{Lemme_cell_length_beta}
For all $\ell \in \{1, \hdots, d\}$ and $k \in \N^*$, we have
\[
\E \bigg[ V_l(\bx, \T)^2 \bigg] \leq C \bigg(1- \frac{3}{4d} \bigg)^k,
\]
with $C= \exp (12/(4d-3))$.
\end{lemme}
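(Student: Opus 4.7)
The overall strategy mimics the elementary computation that works for the centered forest, where each split along direction $\ell$ deterministically multiplies the $\ell$-side length of the cell by $1/2$, so that $V_\ell = 2^{-K_\ell}$ with $K_\ell \sim \text{Binomial}(k, 1/d)$ and $\E[V_\ell^2] = \E[4^{-K_\ell}] = (1-3/(4d))^k$ by a direct binomial computation. For the median tree the split is at an \emph{empirical} median $U_j$, not at the exact midpoint, but the critical structural property of the algorithm---that the splitting point is not sent down to either child cell---keeps the data points inside every cell iid uniform on that cell, conditionally on its shape and point count. This property is the engine of the proof: it forces the empirical median to be Beta distributed, concentrated around $1/2$ with variance $O(1/N_j)$, so that each $\ell$-split still contributes roughly a factor $1/4$ to $\E[V_\ell^2]$, up to a small multiplicative loss absorbed into the constant $C$.

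Concretely, I would write $V_\ell(\bx, \T) = \prod_{j=1}^{k} Z_j$, where $Z_j = 1$ if the coordinate $C_j$ selected at level $j$ differs from $\ell$ and otherwise $Z_j = U_j \mathds{1}_{t_j < U_j} + (1 - U_j) \mathds{1}_{t_j \geq U_j}$, with $U_j$ the normalized position of the empirical median along $\ell$ inside the cell at depth $j-1$ and $t_j$ the normalized $\ell$-coordinate of $\bx$ in that cell. Let $\mathcal{F}_{j-1}$ be the filtration encoding the $C_i$ for $i<j$, the cell geometry through depth $j-1$, and the counts $N_i$ up to $i=j$. Conditionally on $\mathcal{F}_{j-1}$ and on $\{C_j = \ell\}$, the distributional fact above gives that $U_j$ follows an almost symmetric Beta law with mean within $1/(2N_j)$ of $1/2$ and variance at most $1/(4N_j)$.

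The proof then reduces to a per-split bound and an iteration. Using the pointwise inequality $Z_j^2 \leq \max(U_j, 1-U_j)^2 = 1/4 + |U_j - 1/2| + (U_j - 1/2)^2$, taking conditional expectation, and bounding $\E[|U_j - 1/2| \mid \mathcal{F}_{j-1}]$ by Cauchy--Schwarz yields $\E[Z_j^2 \mid \mathcal{F}_{j-1}, C_j = \ell] \leq 1/4 + c/\sqrt{N_j}$ for some universal $c$. Averaging over the uniform, independent choice of $C_j$ gives $\E[Z_j^2 \mid \mathcal{F}_{j-1}] \leq (1 - 3/(4d)) + c/(d\sqrt{N_j})$. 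Since $N_j$ is $\mathcal{F}_{j-1}$-measurable and the median construction guarantees the deterministic bound $N_j \geq a_n/2^{j-1} - O(1)$, iterating the tower property and factoring $(1 - 3/(4d))$ out of each factor gives
\[
\E[V_\ell(\bx,\T)^2] \leq \Bigl(1 - \tfrac{3}{4d}\Bigr)^k \prod_{j=1}^k \Bigl(1 + \tfrac{4c}{(4d-3)\sqrt{a_n/2^{j-1}}}\Bigr) \leq \Bigl(1-\tfrac{3}{4d}\Bigr)^k \exp\!\Bigl(\tfrac{4c}{4d-3}\sum_{j=1}^k \sqrt{\tfrac{2^{j-1}}{a_n}}\Bigr).
\]
Under the standing hypothesis $a_n/2^k \geq 4$, the geometric-type sum in the exponent is uniformly $O(1)$; tracking the numerical constants carefully then produces the announced $C = \exp(12/(4d-3))$.

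The main technical obstacle, in my view, is the distributional claim that underpins the whole argument: the surviving data in every cell must be shown to remain iid uniform on that cell, conditionally on the tree above. This is not entirely obvious because, having thrown away the median point at each split, one must verify that the remaining order statistics are, conditionally on the value of the discarded one, again a uniform sample on the reduced interval. This follows from a classical identity on uniform order statistics but has to be invoked cleanly and propagated through the induction on $j$. Once that fact is secured, the rest is a Beta-variance calculation and routine geometric-series bookkeeping.
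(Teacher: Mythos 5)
Your proposal is correct and follows essentially the same route as the paper's proof: the same product decomposition of $V_\ell(\bx,\T)$ over the $k$ levels, the same key distributional fact (the surviving points in each cell remain conditionally i.i.d.\ uniform because the median point is discarded, so the normalized median position is Beta distributed), and the same endgame of factoring $(1-3/(4d))^k$ out of the product and exponentiating a geometrically decaying sum of corrections. The one genuine divergence is the per-split second-moment bound. The paper identifies the level-$j$ factor as $B(n_j+1,n_{j-1}-n_j)$, computes its second moment in closed form as $\frac{(n_j+1)(n_j+2)}{(n_{j-1}+1)(n_{j-1}+2)}$, and uses $n_j\le n_{j-1}/2$ to obtain a multiplicative correction of order $1/n_{j-1}$ per level; your route through $Z_j^2\le 1/4+|U_j-1/2|+(U_j-1/2)^2$ and Cauchy--Schwarz costs a square root and only yields a correction of order $1/\sqrt{n_{j-1}}$. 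This is still harmless---under $a_n2^{-k}\ge 4$ the sum $\sum_{j=1}^k\sqrt{2^{j-1}/a_n}$ is dominated by its last term and is bounded by $(\sqrt{2}-1)^{-1}\cdot\tfrac12\approx 1.21$, so the product still contributes only a universal constant---but it means the specific value $C=\exp(12/(4d-3))$ is not automatic from your computation: with $c\approx 1/2$ you land near $\exp(6/(4d-3))$, which is smaller than the announced constant, so the stated $C$ remains valid, but the ``careful tracking'' you defer is precisely where this has to be verified. Everything else, including your identification of the conditional-uniformity-of-order-statistics claim as the crux to be propagated by induction, matches what the paper does (the paper, for its part, asserts the product-of-Betas identity without spelling that induction out).
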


\begin{proof}[Proof of Lemma \ref{Lemme_cell_length_beta}]

Let us fix $\bx \in [0,1]^d$ and denote by $n_0, n_1, \hdots, n_k$ the number of points in the successive cells containing $\bx$ (for example, $n_0$ is the number of points in the root of the tree, that is $n_0 = a_n$). Note that $n_0, n_1, \hdots, n_k$ depends on $\mathcal{D}_n$ and $\Theta$, but to lighten notations, we omit these dependencies.
Recalling that $V_{\ell}(\bx, \Theta)$ is the length of the $\ell$-th side of the cell containing $\bx$, this quantity can be written as a product of independent beta distributions:  
\begin{align*}
V_{\ell}(\bx, \Theta) \overset{\mathcal{D}}{=} \prod_{j=1}^k \big[ B(n_j +1,n_{j-1} - n_j) \big]^{\delta_{\ell, j}(\bx, \Theta)},
\end{align*}
where $B(\alpha, \beta)$ denotes the beta distribution of parameters $\alpha$ and $\beta$, and the indicator $\delta_{\ell, j}(\bx, \Theta)$ equals to $1$ if the $j$-th split of the cell containing $\bx$ is performed along the $\ell$-th dimension (and $0$ otherwise). 
%
%
%
Consequently, 
\begin{align}
\mathds{E} \big[ V_{\ell}(\bx, \Theta)^2 \big] & = \prod_{j=1}^k \mathds{E} \bigg[ \big[ B(n_j +1,n_{j-1} - n_j) \big]^{2 \delta_{\ell, j}(\bx, \Theta)} \bigg] \nonumber\\
& = \prod_{j=1}^k \mathds{E} \bigg[ \mathds{E} \bigg[ \big[ B(n_j +1,n_{j-1} - n_j) \big]^{2\delta_{\ell, j}(\bx, \Theta)} \big| \delta_{\ell, j}(\bx, \Theta) \bigg] \bigg] \nonumber\\
& = \prod_{j=1}^k \mathds{E} \bigg[ \mathds{1}_{\delta_{\ell, j}(\bx, \Theta) =0} + \mathds{E}\big[ B(n_j +1,n_{j-1} - n_j) \big]^{2} \mathds{1}_{\delta_{\ell, j}(\bx, \Theta) =1} \bigg] \nonumber\\
& = \prod_{j=1}^k \bigg( \frac{d-1}{d} + \frac{1}{d} \mathds{E}\big[ B(n_j +1,n_{j-1} - n_j) \big]^{2} \bigg) \nonumber\\
& = \prod_{j=1}^k \bigg( \frac{d-1}{d} + \frac{1}{d} \frac{(n_j + 1)(n_j+2)}{(n_{j-1} + 1)(n_{j-1}+2)} \bigg) \nonumber\\
& \leq \prod_{j=1}^k \bigg( \frac{d-1}{d} + \frac{1}{4d} \frac{(n_{j-1} + 2)(n_{j-1}+4)}{(n_{j-1} + 1)(n_{j-1}+2)} \bigg) \nonumber\\
& \leq \prod_{j=1}^k \bigg( 1 - \frac{1}{d} + \frac{1}{4d} \frac{n_{j-1}+4}{n_{j-1} + 1} \bigg), \label{inequality-moment-Vl}
\end{align}
where the first inequality stems from the relation $n_j \leq n_{j-1}/2$ for all $j \in \{1, \hdots, k\}$.
We have the following inequalities.
\begin{align*}
\frac{n_{j-1}+4}{n_{j-1}+1} & \leq \frac{a_n+2^{j+1}}{a_n-2^{j-1}} = \frac{a_n+2^{j+1}}{a_n(1-\frac{2^{j-1}}{a_n})}\\
& \leq \frac{a_n+2^{j+1}}{a_n} \bigg( 1+ \frac{2^{j-1}}{a_n} \frac{1}{1- \frac{2^{j-1}}{a_n}} \bigg)\\
& \leq \bigg(1+ \frac{2^{j+1}}{a_n} \bigg)^2,
\end{align*}
since
\[
\frac{2^{j-1}}{a_n} \leq \frac{2^{k-1}}{a_n} \leq \frac{1}{2}.
\]

Going back to inequality \eqref{inequality-moment-Vl}, we find
\begin{align*}
\E \bigg[ V_l(\bx, \T)^2 \bigg] & \leq \prod_{j=1}^k \bigg[ 1 - \frac{1}{d} + \frac{1}{4d} \bigg( 1+ \frac{2^{j+1}}{a_n} \bigg)^2 \bigg]\\
& \leq \prod_{j=1}^k \bigg[ 1 - \frac{3}{4d} + \frac{3}{d} \frac{2^{j-1}}{a_n} \bigg]\\
& \leq \prod_{j=1}^k \bigg[ 1 - \frac{3}{4d} + \frac{3}{d} \frac{2^k}{a_n}2^{j-k} \bigg]\\
& \leq \prod_{j=0}^{k-1} \bigg[ 1 - \frac{3}{4d} + \frac{3}{d} 2^{-j-1} \bigg].
\end{align*}

Moreover, we can notice that
\begin{align*}
\ln \left( \prod_{j=0}^{k-1} \bigg[ 1 - \frac{3}{4d} + \frac{3}{d} 2^{-j-1} \bigg] \right) & = k \ln \bigg( 1- \frac{3}{4d} \bigg) + \sum_{j=0}^{k-1} \ln \left( 1 + 6\frac{2^{-j}}{4d-3} \right)\\
& \leq k \ln \bigg( 1- \frac{3}{4d} \bigg) + \frac{12}{4d-3}.
\end{align*}
This yields to the desired upper bound
\[
\E \bigg[ V_l(\bx, \T)^2 \bigg] \leq C \bigg(1- \frac{3}{4d} \bigg)^k,
\]
with $C= \exp (12/(4d-3))$.
\end{proof}

We now put our interest in the proofs of the two Corollaries presented in Section \ref{S-theorie}.

\begin{proof}[Proof of Corollary \ref{Corollary_1}]
Regarding Theorem \ref{th_general_bound}, we want to find the optimal value of pruning, in order to obtain the best rate of convergence for the forest estimate.

Let $C_1 =  \frac{2\sigma^2}{n}$ and $C_2 = d^{\frac{3}{2}} L^2C$ and $\beta = \Big(1- \frac{3}{4d} \Big)$. Then, 
\begin{align*}
\E \big[ m_{\infty,n}(\bx) - m(\bx) \big]^2 & \leq   C_1 2^k + C_2 \beta^{k}.
\end{align*}
Let $f : x \mapsto C_1 e^{x \ln 2} + C_2 e^{x\ln(\beta)}$. Thus, 
\begin{align*}
f'(x) & = C_1 \ln 2 e^{x \ln 2} + C_2 \ln(\beta) e^{x\ln(\beta)}\\
& = C_1 \ln 2 e^{x \ln 2} \left( 1 + \frac{C_2 \ln(\beta)}{C_1 \ln 2} e^{x(\ln(\beta) - \ln 2)} \right).
\end{align*}
Since $\beta \leq 1$, $f'(x) \leq 0$ for all $x \leq x^{\star}$ and $f'(x) \geq 0$ for all $x \geq x^{\star}$, where $x^{\star}$ satisfies
\begin{align*}
& \quad f'(x^{\star}) = 0 \\
\Longleftrightarrow & \quad  x^{\star} = \frac{1}{\ln 2 - \ln(\beta)} 
\ln \bigg( -\frac{C_2 \ln(\beta)}{C_1 \ln 2} \bigg)\\
\Longleftrightarrow & \quad x^{\star} = \frac{1}{\ln 2 - \ln(\beta)} \bigg[ 
\ln \bigg( \frac{1}{C_1} \bigg) +  \ln \bigg( -\frac{C_2 \ln(\beta)}{ \ln 2} \bigg) \bigg]\\
\Longleftrightarrow & \quad  x^{\star} = \frac{1}{\ln 2 - \ln \Big(1- \frac{3}{4d} \Big)} \left[ 
\ln ( n ) +  \ln \left(- \frac{d L^2C \ln \Big(1- \frac{3}{4d} \Big)}{2 \sigma^2 \ln 2} \right) \right]\\
\Longleftrightarrow & \quad x^{\star} = \frac{1}{\ln 2 - \ln \beta} \bigg[ 
\ln (n) +  C_3 \bigg],
\end{align*}
where $C_3 = \ln \left(- \frac{dL^2C \ln \Big(1- \frac{3}{4d} \Big)}{2 \sigma^2 \ln 2} \right)$. 

Consequently, 
\begin{align*}
\E \big[ m_{\infty,n}(\bx) - m(\bx) \big]^2 & \leq   C_1 \exp(x^{\star} \ln 2) + C_2 \exp( x^{\star} \ln \beta)\\
& \leq  C_1 \exp\bigg(\frac{1}{\ln 2 - \ln \beta} \bigg[ 
\ln (n) +  C_3 \bigg] \ln 2 \bigg) \\
& \quad + C_2 \exp\bigg( \frac{1}{\ln 2 - \ln \beta} \bigg[ 
\ln (n) +  C_3 \bigg] \ln \beta \bigg)\\
& \leq  C_1 \exp\bigg( \frac{C_3 \ln 2}{\ln 2 - \ln \beta}  \bigg) \exp\bigg(\frac{\ln 2}{\ln 2 - \ln \beta} \ln (n)  \bigg) \\
& \quad + C_2 \exp\bigg( \frac{ C_3 \ln \beta }{\ln 2 - \ln \beta} \bigg) 
\exp\bigg( \frac{ \ln \beta}{\ln 2 - \ln \beta} \ln (n) \bigg)\\
& \leq C_5 n^{\frac{ \ln 2}{\ln 2 - \ln \beta}-1} + C_6 n^{\frac{ \ln \beta}{\ln 2 - \ln \beta}}\\
& \leq \bigg(C_5 + C_6 \bigg)n^{\frac{\ln \Big(1- \frac{3}{4d} \Big)}{\ln 2 - \ln \Big(1- \frac{3}{4d} \Big)}},
\end{align*}

where $C_5 = 2 \sigma^2 \exp \bigg( \frac{C_3 \ln 2}{\ln 2 - \ln \beta} \bigg)$ and $C_6 = C_2 \exp \bigg( \frac{C_3 \ln \beta}{\ln 2 - \ln \beta} \bigg)$.
\end{proof}

\begin{proof}[Proof of Corollary \ref{Corollary_2}]
In this Corollary, we focus on the optimal value of subsampling, always for the speed of convergence for the forest estimate. Since
$k_n = \log_2(a_n) - 2 $ and $k_n$ satisfies equation (\ref{kn_inequality}), we have 
\begin{align*}
a_n = 4. 2^{\frac{C_3}{\ln 2 - \ln \beta}}. n^{\frac{\ln 2}{\ln 2 - \ln \beta}},
\end{align*}
%
where, simple calculations show that
\begin{align*}
2^{\frac{C_3}{\ln 2 - \ln \beta}}
& = \left( \frac{3  L^2 e^{12/(4d-3)} }{8 \sigma^2 \ln 2} \right)^{\frac{\ln 2}{\ln 2 - \ln \beta}}.
\end{align*}
This concludes the proof, according to Theorem \ref{th_general_bound}.
\end{proof}

\bibliography{biblio-these}

\end{document}